\newtheorem{theorem}{Theorem}[section]
\newtheorem{lemma}[theorem]{Lemma}
\newtheorem{proposition}[theorem]{Proposition}
\newtheorem{corollary}[theorem]{Corollary}
\newtheorem{remark}[theorem]{Remark}
\mathchardef\emptyset="001F
\newenvironment{proof}{{\bf
Proof:\,}}{\hspace*{\fill}\rule{1.2ex}{1.2ex}\\ }
\numberwithin{equation}{section}
\newcommand{\R}{\mathbb R}
\newcommand{\N}{\mathbb N}
\newcommand{\Z}{\mathbb Z}
\newcommand{\mtwo}{{\mathbb M}^{2{\times}2}}
\newcommand{\dist}{{\rm dist}}
\newcommand{\sym}{{\rm sym}\,}
\newcommand{\skw}{{\rm skew}\,}
\renewcommand{\div}{{\rm div}}
\newcommand{\ol}{\overline}
\newcommand{\eps}{\varepsilon}
\newcommand{\Id}{\operatorname{Id}}
\newcommand{\sd}{\, d}
\newcommand{\weight}[1]{\langle #1\rangle}
\newcommand{\fint}{\int}
\newcommand{\notshow}[1]{}
\newcommand{\Div}{\div}
\newenvironment{proof*}[1]{{\bf Proof
#1:}}{\hspace*{\fill}\rule{1.2ex}{1.2ex}\\ } 
\begin{document}
\begin{titlepage}
\title{Large Time Existence for Thin Vibrating Plates}
\author{H. Abels, M.G.\ Mora, and S.\ M\"uller}

\end{titlepage}
\maketitle
\begin{abstract}
We construct strong solutions for a nonlinear wave equation for a thin vibrating plate described by nonlinear elastodynamics. For sufficiently small thickness we obtain existence of strong solutions for large times under appropriate scaling of the initial values such that the limit system as $h\to 0$ is either the nonlinear von K\'arm\'an plate equation or the linear fourth order Germain-Lagrange equation. In the case of the linear Germain-Lagrange equation we even obtain a convergence rate of the three-dimensional solution to the solution of the two-dimensional linear plate equation.
\end{abstract}
\noindent{\bf Key words:} Wave equation, plate theory, von K\'arm\'an, nonlinear elasticity, dimension reduction, singular perturbation

\noindent{\bf AMS-Classification:} Primary: 74B20, 
Secondary:
35L20, 
35L70,  
74K20 
\section{Introduction}

In the present contribution we study the nonlinear wave equation for a thin vibrating plate (or rod if $d=2$). The plate is assumed to be of small but positive thickness $h>0$ and satisfies the equations of three-dimensional nonlinear elastodynamics.

In order to explain the result and the model under consideration, let us start by recalling some facts and results for the corresponding variational problems, see \cite{FrieseckeJamesMueller} for further details. We consider the elastic energy
\begin{equation*}
  \tilde{E}^h(z)= \frac1h \int_{\Omega_h} \left(W(\nabla z(x)) - f^h\cdot (z(x)-x) \right)\sd x,
\end{equation*}
where $\Omega_h= \Omega'\times (-\frac{h}2,\frac{h}2)$ is the reference configuration of the thin plate, $\Omega'\subset \R^{d-1}$, $d=2,3$, is a suitable bounded domain, and $z\colon \Omega_h \to \R^d$ is the deformation of the plate. For simplicity, we will restrict ourselves to the case $d=3$ in this introduction. Rescaling $\Omega_h$ to $\Omega= \Omega'\times (-\frac{1}2,\frac{1}2)$, we obtain the rescaled energy
\begin{equation*}
  E^h(y)=  \int_{\Omega} \left(W(\nabla_h y(x)) - f^h\cdot \left(y(x)-
      \begin{pmatrix}
        x_1\\ x_2\\ h x_3
      \end{pmatrix}
\right) \right)\sd x,
\end{equation*}
where $y(x)= z(x',hx_3)$ with $x'=(x_1,x_2)$ and $\nabla_h = (\partial_{x_1},\partial_{x_2},\frac1h \partial_{x_3})$. The limit as $h\to 0$ depends on the asymptotic behaviour of $f^h$. More precisely, let $f^h$ be of order $h^\alpha$. If $\alpha=2$, then the energy $E^h$ is of order $h^\beta$ with $\beta=2$. The rescaled energy  $\frac1{h^2}E^h$ converges as $h\to 0$ to the elastic energy from the geometrically fully nonlinear Kirchhoff theory in the sense of $\Gamma$-convergence. To the authors' knowledge there are no results on existence of solutions for the corresponding dynamic wave equation or on regularity of non-minimizing equilibria. Indeed even the precise definition of equilibrium is not completely clear since the isometry constraint $\nabla \ol{y}^T\nabla \ol{y}= \Id $ for the limit map $\ol{y}\colon \Omega'\to \R^3$ makes the problem very rigid; see Hornung~\cite{Hornung6,Hornung7} for recent progress. If $\alpha>2$ and $\beta= 2\alpha-2$, then the limit energy can be described as
\begin{equation*}
  \frac{\Lambda_\alpha}2 \int_{\Omega'} Q_2\left(\eps(U)+ \frac{\nabla V\otimes \nabla V}2\right)\sd x' + \frac1{24} \int_{\Omega'} Q_2(\nabla^2 V)\sd x',
\end{equation*}
where $\eps(U)= \sym(\nabla U)$,
\begin{eqnarray}
  \label{eq:1}
  U&=& \lim_{h\to 0} \frac1{h^\gamma}
  \left(
\begin{pmatrix}
    y_1^h\\y_2^h
  \end{pmatrix}
  - \operatorname{Id}'
\right),\quad
  V= \lim_{h\to 0} \frac1{h^\delta} y_3^h,\\\label{eq:2}
\delta&=&\alpha-2,\quad \gamma=
\begin{cases}
2(\alpha-2) &\text{if}\ 2<\alpha\leq 3\\
  \alpha-1 &\text{if}\ \alpha> 3
\end{cases}
, 
\end{eqnarray}
where $\operatorname{Id}'(x)= (x_1,x_2)^T$
and $Q_2\colon \R^{2\times 2}\to \R$ is related to $Q_3(F):= D^2W(\Id)(F,F)$ by
\begin{equation*}
  Q_2(G)= \min_{a\in\R^3} Q_3(G+a\otimes e_3+ e_3\otimes a).
\end{equation*}
Here
\begin{equation*}
  \Lambda_\alpha =
  \begin{cases}
    +\infty &\text{if}\ 2 <\alpha <3,\\
   1 &\text{if}\ \alpha =3,\\
 0 &\text{if}\ \alpha >3.
  \end{cases}
\end{equation*}
Thus for $2<\alpha<3$ one has the ``geometrically linear'' constraint $2\eps(U)+\nabla V\otimes \nabla V=0$, which again has so far prevented the rigorous study of the associated dynamic wave equation or non-minimizing equilibria.
For $\alpha=3$ (and therefore $\beta=4$) one obtains the von K\'arm\'an plate theory and for $\alpha>3$ (and therefore $\beta>4$) one obtains a linear Euler-Lagrange equation (linear Germain-Lagrange theory), 
which for isotropic materials reduces to the biharmonic equation.

Here we study the cases $\alpha=3, \beta=4$  and $\alpha>3, \beta=2\alpha-2>4$ in the dynamic situation. The equations of elastodynamics arise from the Lagrangian
\begin{equation*}
  \frac1h \int_{\Omega_h} \left(\frac{|z_t|^2}2- W(\nabla z(x))+ f^hz\right)\sd x=  \int_{\Omega} \left(\frac{|y_t|^2}2- W(\nabla_h y(x))+f^h\cdot y\right)\sd x
\end{equation*}
and solutions formally preserve the total energy
\begin{equation}\label{eq:3}
  \int_{\Omega} \left(\frac{|y_t|^2}2+ W(\nabla_h y(x))- f^h\cdot y\right)\sd x, 
\end{equation}
where it is assumed that $f^h$ is independent of time for simplicity.
In view of (\ref{eq:1})-(\ref{eq:2}) we expect that
\begin{alignat*}{3}
  y_3 &\sim h, &\quad
  \begin{pmatrix}
    y_1\\y_2
  \end{pmatrix}
 - \operatorname{Id}'
&\sim h^2 &\quad& \text{for}\ \alpha=3,\beta=4\\
  y_3 &\sim h^{\alpha-2}, &\quad
  \begin{pmatrix}
    y_1\\y_2
  \end{pmatrix}
-\operatorname{Id}'
 &\sim h^{\alpha-1} &\quad& \text{for}\ \alpha>3,\beta=2\alpha-2>4
\end{alignat*}
The idea to balance the kinetic and potential energy in (\ref{eq:3}) suggests to rescale time as $\tau=ht$ if $\alpha=3$. Then the total energy becomes
\begin{equation*}
  E_{\rm tot} = h^4\int_{\Omega} \left(\frac{|\partial_\tau \frac{y}h|^2}2+ \frac1{h^4}W(\nabla_h y(x))- \frac{f^h_3}{h^3}\frac{y_3}h\right)\sd x
\end{equation*}
and with $\tilde{f}_h= h^{-3}f_3^he_3$ the evolution equation is 
\begin{equation*}
 \frac1{h^2}\partial_\tau^2y-\frac1{h^4} \Div_h DW(\nabla_h y)= \frac1h \tilde{f}_h.
\end{equation*}
or equivalently
\begin{equation}\label{eq:4}
 \partial_\tau^2y-\frac1{h^2} \Div_h DW(\nabla_h y)= h \tilde{f}_h,
\end{equation}
where $\tilde{f}_h \sim 1$ as $h\to 0$. Additionally we assume Neumann boundary conditions at $x_d=\pm \frac12$ and periodic boundary conditions in tangential direction.
In the case $\alpha=3$ we will show existence of strong solutions of (\ref{eq:4}) for well-prepared and small data in a natural scaling with respect to $h$ and  time $\tau \in (0,T_0)$. In particular we assume that the rescaled $\tilde{f}_h$ is small, cf. Section~\ref{sec:MainResult} below. -- Note that the small time interval $(0,T_0)$ for $\tau$ turns over to a large time interval $(0,T_0h^{-1})$ in the original time scale for $t$. In the case $\alpha>3$, we will use the same time scale. Then we are able to show existence of strong solutions for $\tau \in (0,T)$ for any $T>0$ provided that $\tilde{f}_h \sim h^{\alpha-3}$ and suitable initial data, cf. Section~\ref{sec:MainResult} below.  In this case we are even able to construct the leading term of the solution $y=y_h$ as $h\to 0$ provided $W(F)=\dist(F,SO(3))^2$, cf. Section~\ref{sec:Asymptotics}. 

Together with \cite{AbelsMoreMuellerGammaConv} this shows that after the natural time rescaling and for well prepared data of the correct size solutions of the $3$-d nonlinear elastodynamics converge to solutions of the dynamic von K\'arm\'an equation or linear von K\'arm\'an equation depending on the size of the data. 
We note that a similar result in the case of stationary solutions was shown  by Monneau~\cite{MonneauJustification} if the limit system are the von K\'arm\'an plate equations. 
Ge, Kruse and Marsden~\cite{GeKruseMarsden} have taken an alternative and very general
approach to study the limit from three-dimensional
elasticity to shells and rods by establishing convergence of the
underlying Hamiltionian structure.
This suggests, but does not prove the convergence of the corresponding
dynamical problems
(see e.g. recent work by Mielke~\cite{MielkeWeakConvMethods} for the question on the relation of the
convergence of the Hamiltonian
and the convergence of the resulting dynamical problems).
General information and many further references
on the dynamics of lower-dimensional nonlinear elastic structures can be
found in the book by Antman~\cite{Antman}.
For results on existence of weak and strong solutions of the non-stationary von K\'arm\'an plate equations we refer to e.g. Chen and Wahl~\cite{ChenWahl}, Koch and Lasiecka~\cite{KochLasiecka}, Lasiecka~\cite{LasieckaStabilization}, Koch and Stahel~\cite{KochStahel}. For a survey on results and open problem of nonlinear elasticity, stationary and non-stationary, we refer to Ball~\cite{BallOpenProblems}.

Let us explain the strategy of our proof and the main difficulties. Basically, the strong solutions are constructed by the energy method as presented in Koch~\cite{KochWaveEquation}  for the case of Neumann boundary conditions. (See the book by Majda~\cite{MajdaCompFluidFlow} for the full space case or  the classical paper by Hughes et al.~\cite{MarsdenEtAl} for a more abstract and general version. See Kikuchi and Shibata ~\cite{ShibataKikuchi} for a different approach.) Essentially existence of strong solutions for fixed $h>0$ and some $T>0$ depending on $h$ follows from \cite{KochWaveEquation}. Although the latter results are proved for the case of a smooth bounded domain, the proofs easily carry over to the present situation (for every fixed $h>0$) and many arguments even simplify in our situation since the boundary is flat and homogeneous Neumann boundary conditions are considered. Hence the main novelty of this contribution is the proof that for appropriately scaled initial data the maximal time of existence is bounded below by a positive constant as $h\to 0$. 

To explain the main new difficulties in the following let us recall the energy method briefly. The starting point in the method is the conservation of energy:
\begin{equation*}
  \frac{d}{dt} \left(\frac12\|\partial_t y(t)\|_{L^2(\Omega)}^2+\frac1{h^2}\int_\Omega W(\nabla_h y)\sd x\right) - \int_\Omega h\tilde{f}_h\cdot \partial_t y(t)\sd x = 0
\end{equation*}
which follows from (\ref{eq:4}) by multiplication with $\partial_t y$ under appropriate boundary conditions. (Here and in the following we replace $\tau$ by $t$.) Moreover, differentiating (\ref{eq:4}) with respect to $x$ one gets a control of 
\begin{equation}\label{eq:5}
  \frac{d}{dt} \left(\frac12\|\partial_t \partial_x^\beta y(t)\|_{L^2(\Omega)}^2+\frac1{h^2}\int_\Omega D^2W(\nabla_h y)\partial_x^\beta \nabla_h y:\partial_x^\beta\nabla_h y \sd x\right) = R_\beta,
\end{equation}
where the remainder term $R_\beta$ can be controlled with the aid of the Gronwall inequality once the left hand side controls $\partial_x^\beta \nabla_h y$ suitably. To this end it is essential to have the coercive estimate  
\begin{equation}\label{eq:coercive}
  \frac1{h^2}\int_\Omega D^2W(\nabla_h y)\nabla_h w:\nabla_h w\sd x\geq c_0 \left\|\frac1h \eps_h(w)\right\|_{L^2(\Omega)}^2 
\end{equation}
where $\eps_h(w) = \sym (\nabla_h w)$, cf. (\ref{eq:UniformCoercive'}) below. By Korn's inequality in the present $h$-dependent version we have
\begin{equation*}
  \|\nabla_h w\|_{L^2(\Omega)} \leq C\left\|\frac1h\eps_h (w)\right\|_{L^2(\Omega)},
\end{equation*}
cf. Lemma~\ref{lem:Korn} below. Therefore we will have one order of $h$ better decay of the symmetric part of $\nabla_h y$ than for the full gradient/the skew-symmetric part. To obtain (\ref{eq:coercive}) (and similar estimates) it will be essential that 
$$\frac1h\|\eps_h (y)-I\|_{L^\infty}+ \|\nabla_h y-I\|_{L^\infty}\leq \eps h$$ 
for some sufficiently small $\eps>0$ and to treat the symmetric and asymmetric part carefully in a Taylor expansion of $D^2W(\nabla_h y)$ around $I$, cf. Sections~\ref{sec:Preliminaries} and \ref{sec:Linear} for the details.

Several technical difficulties arise from the fact that we are dealing with natural boundary conditions at the upper and lower boundary $x_d=\pm \frac12$. In tangential direction we assume periodic boundary conditions. First of all, in this situation it is easy to differentiate in tangential and temporal direction to obtain (\ref{eq:5}) with $\partial_x^\beta w$ replaced by $\partial_z^\beta w$, where $z=(x',t)$ and $x'=(x_1,\ldots ,x_{d-1})$. Therefore we are using anisotropic $L^2$-Sobolev spaces of sufficiently high order to control $\nabla_h y$ in $L^\infty$. In particular, one of the basic spaces is 
\begin{equation*}
  \tilde{V}(\Omega)=\left\{u\in L^2(\Omega): \nabla u, \partial_{x_j}\nabla u\in L^2(\Omega), j=1,\ldots, d-1\right\}\hookrightarrow L^\infty(\Omega)
\end{equation*}
if $d=2,3$. Note that $\tilde{V}(\Omega)$ is slightly larger than $H^2(\Omega)$ and that $u\in H^2(\Omega)$ if and only if $u\in \tilde{V}(\Omega)$ and $\partial_{x_d}^2 u\in L^2(\Omega)$. Moreover, since we are dealing with natural boundary conditions, we want to keep the equation in divergence form. Therefore we do not use the identity
\begin{equation*}
  \Div_h DW(\nabla_hy)= D^2W(\nabla_hy)\cdot \nabla_h^2y
\end{equation*}
to obtain a quasi-linear system. Instead we differentiate (\ref{eq:4}) with respect to time or tangentially and solve
\begin{equation*}
  \partial_t^2 w_j -\frac1{h^2}\Div_h \left(D^2W(\nabla_hy)\nabla_hw_j\right)=hf_j,\quad j=0,\ldots d-1
\end{equation*}
where $w_0=\partial_t y$, $f_0=\partial_t \tilde{f}_h$, $w_j=\partial_{x_j} y$, $f_j=\partial_{x_j}\tilde{f}_h$ for $j=1,\ldots,d-1$. Applying suitable $h$-uniform estimates for the linearized system, we prove that the solutions cannot blow up on a time interval independent of $0<h\leq 1$ if the data are sufficiently small.

The structure of the article is as follows: In Section~\ref{sec:Preliminaries} we introduce some notation and derive some preliminary results. Our main result is presented in Section~\ref{sec:MainResult}.  The essential results for the linearized system are derived in Section~\ref{sec:Linear}. These are applied in Section~\ref{sec:uniform}, where our main result is proved. Finally, in Section~\ref{sec:Asymptotics} we derive a first order asymptotic expansion as $h\to 0$ in the case that the limit system is linear, i.e., $\beta>4$, and $W(F)= \dist(F,SO(d))^2$.

\medskip

\noindent
{\bf Acknowledgements:} This work was partially supported by GNAMPA, through the project ``Problemi di riduzione di dimensione per strutture elastiche sottili'' 2008. Moreover, we are grateful to an anonymous referee for helpful remarks on  the content of paper and further references, which helped to improve the paper.

\section{Notation and Preliminaries}\label{sec:Preliminaries}

For any measurable set $M\subseteq \R^N$ the inner product of $L^2(M)$ (w.r.t. to Lebesgue measure) is denoted by $(.,.)_M$. Moreover, $H^k(\Omega)$, $k\in\N_0$, denotes the usual $L^2$-Sobolev spaces. If $X$ is a Banach space, then the vector-valued variants of $L^2(M)$ and $H^k(M)$ are denoted by $L^2(M;X)$, $H^k(M;X)$, respectively. Furthermore, $C^k([0,T];X)$, $k\in \N_0$, denotes the space of all $k$-times continuously differentiable functions $f\colon [0,T]\to X$. 

For the following $\Omega= (-L,L)^{d-1}\times(-\frac12,\frac12)$, $\Omega'=(-L,L)^{d-1}$, $d=2,3$, $x=(x',x_d)$,
where $x'\in \R^{d-1}$, let $\nabla_h = (\nabla_{x'},
\frac1h \partial_{x_d})^T$, $\nabla_{x,t}=(\partial_t, \nabla_x)$ and let
\begin{equation*}
  \eps_h (w) = \sym(\nabla_h w),\qquad \eps(w)=\eps_1(w),
\end{equation*}
if $w\colon M\subset \R^d\to \R^d$ is a suitable vector field. Here $\sym A=
\frac12 (A+A^T)$ and we denote $\skw A:= \frac12 (A-A^T)$. Moreover, we denote $z=(t,x')$, where $z_0 = t$ and $z_j = x_j$ for $j=1,\ldots, d-1$.

For $s> 0$, $s\not\in \N_0$, we define $L^2$-Bessel potential spaces 
\begin{equation*}
  H^s(\Omega) = \{f\in L^2(\Omega): f= F|_{\Omega}\ \text{for some}\ F\in H^s(\R^d)\}
\end{equation*}
as usual by restriction, equipped with the quotient norm.
Since $\Omega$ is a Lipschitz domain, there is a continuous extension operator $E$ such that $E\colon H^k(\Omega)\to H^k(\R^d)$ for all $k\in \N$, cf. Stein~\cite[Chapter VI, Section 3.2]{Stein:SingInt}. Hence $H^s(\Omega)$, $s\geq 0$, is retract of $H^s(\R^d)$ and we obtain the usual interpolation properties, cf. e.g. \cite{Triebel1}. In particular, we have
\begin{equation}\label{eq:InterpolHs}
  (H^{s_0}(\Omega), H^{s_1}(\Omega))_{\theta,2}= H^s(\Omega),\qquad s= (1-\theta)s_0+ \theta s_1, 
\end{equation}
for all $\theta \in (0,1)$, $s\geq 0$, where  $(.,.)_{\theta,p}$ denotes the real interpolation method.

If $0<T\leq \infty$ and $X$ is a Banach space, then  $BUC([0,T];X)$ is the space of all bounded and uniformly continuous functions $f\colon [0,T)\to X$.
Now let $X_0,X_1$ be Banach spaces such that $X_1\hookrightarrow X_0$ densely. 
Then
\begin{equation}
  \label{eq:BUCEmbedding}
   W^1_p(0,T;X_0) \cap L^p(0,T;X_1) \hookrightarrow BUC([0,T];(X_0,X_1)_{1-\frac1p,p})
\end{equation}
for all $1\leq p <\infty$
continuously, cf. Amann~\cite[Chapter III, Theorem 4.10.2]{Amann}.
If $X_0=H$ is a Hilbert space and $H$ is identified with its dual, then $X_1\hookrightarrow H\hookrightarrow X_1'$ and
\begin{equation}
  \label{eq:HNormDifferential}
  \frac12\frac{d}{dt} \|f\|_{H}^2 = \weight{\frac{d}{dt} f(t), f(t)}_{X_1',X_1} \qquad \text{for almost all}\ t\in [0,T] 
\end{equation}
provided that $f\in L^p(0,T;X_1)$ and $\frac{d}{dt} f\in L^{p'}(0,T;X_1')$, $1< p <\infty$, cf. Zeidler~\cite[Proposition 23.23]{ZeidlerIIa}. 
In particular, (\ref{eq:HNormDifferential}) implies 
\begin{equation}\label{eq:StrongBUCConv}
  \sup_{t\in [0,T]}\|f(t)\|_{H}^2
 \leq 2\left(\|\partial_t f\|_{L^2(0,T;X_1')}\|f\|_{L^2(0,T;X_1)} + \|f(0)\|_H^2\right).
\end{equation}
Replacing $f(t)$ by $tf(t)$ and $(T-t)f(T-t)$, one easily derives from the latter estimate
\begin{equation}\label{eq:StrongBUCConv'}
  \sup_{t\in [0,T]}\|f(t)\|_{H}
 \leq C_T\|f\|_{H^1(0,T;X_1')}^{\frac12}\|f\|_{L^2(0,T;X_1)}^{\frac12} 
\end{equation}
for some $C_T>0$ depending on $T>0$.

In the following $\mathcal{L}^n(V)$, $n\in\N$, denotes the space of all
$n$-linear mappings $A\colon V^n\to \R$ for a vector space $V$. Moreover, if $A\in
\mathcal{L}^n(V)$, $n\geq 2$, and $x_1,\ldots, x_k\in V$, $1\leq k\leq n$, then $A[x_1,\ldots, x_k] \in
\mathcal{L}^{n-k}(V)$ is defined by $A[x_1,\ldots, x_k](x_{k+1},\ldots, x_n)= A(x_1,\ldots,
x_n)$ for all $x_{k+1},\ldots,x_n\in V$. 

We introduce the scaled inner product 
\begin{equation*}
  A:_h B = \frac1{h^2} \sym A : \sym B + \skw A:\skw B, \quad A,B\in \R^{d\times d}, 0<h\leq 1,
\end{equation*}
and $|A|_h= \sqrt{A:_hA}$ where $A:B= \sum_{i,j=1}^d a_{ij} b_{ij}$. This choice of inner product is motivated by the Korn inequality in thin domains, see Lemma~\ref{lem:Korn} below. Of
course, $:_1$ coincides with the usual inner product $:$ on $\R^{d\times d}$
and therefore $|A|_1=|A|$. For $W\in
\mathcal{L}^n(\R^{d\times d}) $ we define the induced scaled norm by
\begin{equation*}
  |W|_{h} = \sup_{|A_j|_h\leq 1, j=1,\ldots, n} |W(A_1,\ldots,A_n)|.
\end{equation*}
Note that, since $|A|_h\geq |A|_1=|A|$ for all $A\in \R^{d\times d}$, we have $|W|_h\leq |W|_1=:|W|$ for any $W\in
\mathcal{L}^n(\R^{d\times d}) $ and $0< h\leq 1$.

As usual we
identify $\mathcal{L}^1(\R^{d\times d})= (\R^{d\times d})'$ with $\R^{d\times
  d}$. But one has to be careful whether this representation is taken with
respect to the usual scalar product $:$ on $\R^{d\times d}$ or with respect to
$:_h$, i.e., $W\in \mathcal{L}^1(\R^{d\times d})$ is identified with $A\in
\R^{d\times d}$ such that 
\begin{equation*}
  W(B) = A:_hB\qquad \text{for all}\ B\in \R^{d\times d}.
\end{equation*}
If nothing else is mentioned, we identify $(\R^{d\times d})'$ and $\R^{d\times
  d}$ using the standard inner product $:$. In particular, if $W\in C^1(U)$, $U\subset \R^{d\times d}$ and $A\in U$, then
$DW(A)\in (\R^{d\times d})'\cong \R^{d\times d}$ coincides with
\begin{equation*}
  DW(A): B= \left. \frac{d}{dt}W(A+tB)\right|_{t=0}\qquad \text{for all}\
  B\in\R^{d\times d}. 
\end{equation*}
Furthermore, $W\in \mathcal{L}^2(\R^{d\times d})$ is usually identified with the linear mapping $\tilde{W}\colon \R^{d\times d}\to \R^{d\times d}$ defined by
\begin{equation*}
  \tilde{W}A: B = W(A,B)\qquad \text{for all}\ A,B\in \R^{d\times d}.
\end{equation*}
Finally, we denote by
\begin{equation*}
  \|W\|_{L^p_h(M;\mathcal{L}^n(\R^{d\times d}))}\equiv \|W\|_{L^p_h(M)} =
  \left(\int_M |W(x)|_h^p\sd x\right)^{\frac1p}
\end{equation*}
if $1\leq p<\infty$ and with the obvious modifications if $p=\infty$. Here
$M\subseteq \R^d$ is measurable. Moreover, for $f\in L^p(M;\R^{d\times d})$ the scaled norm
$\|f\|_{L^p_h(M;\R^{d\times d})}\equiv \|f\|_{L^p_h(M)}$ is defined in the same way.
 
We now state the relevant  Korn inequality in thin domains.
\begin{lemma}\label{lem:Korn}
  There is a constant $C$ such that
  \begin{equation}\label{eq:Korn}
    \|\nabla_h u \|_{L^2(\Omega)} \leq C \left\|\frac1h\eps_h(
    u)\right\|_{L^2(\Omega)}
  \end{equation}
  for all $0<h\leq 1$ and $u\in H^1(\Omega)^d$ such that $u|_{x_j=-L}=u|_{x_j=L}$,
  $j=1,\ldots, d-1$.
\end{lemma}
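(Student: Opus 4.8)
The plan is to reduce the scaled estimate \eqref{eq:Korn} to a standard Korn inequality on the fixed domain $\Omega$ by tracking the powers of $h$ carefully. Given $u\in H^1(\Omega)^d$ periodic in the tangential variables, write $A=\nabla_h u$ and decompose $A=\sym A+\skw A = \eps_h(u)+\skw(\nabla_h u)$. The quantity we must control is $\|\nabla_h u\|_{L^2}$, which by $|A|^2=|\sym A|^2+|\skw A|^2$ amounts to controlling both $\|\eps_h(u)\|_{L^2}$ and $\|\skw(\nabla_h u)\|_{L^2}$ by $\|h^{-1}\eps_h(u)\|_{L^2}$. The first is immediate since $\|\eps_h(u)\|_{L^2}\le h\,\|h^{-1}\eps_h(u)\|_{L^2}\le \|h^{-1}\eps_h(u)\|_{L^2}$ for $0<h\le 1$; the whole content of the lemma is therefore the estimate on the skew-symmetric part of $\nabla_h u$.

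The key step is to apply the classical first (or second) Korn inequality on $\Omega$ with periodic tangential boundary conditions: there is a constant $C$, depending only on $\Omega$, such that $\|\nabla v\|_{L^2(\Omega)}\le C(\|\eps(v)\|_{L^2(\Omega)}+\|v\|_{L^2(\Omega)})$, and in fact, after quotienting out infinitesimal rigid motions compatible with the periodicity (or using that the only tangentially-periodic fields with $\eps(v)=0$ are spanned by a finite-dimensional space on which we can project), one has $\|\nabla v - \Pi v\|_{L^2}\le C\|\eps(v)\|_{L^2}$ with $\Pi$ the projection onto skew-symmetric constant gradients. The trick is to rescale: set $v(x',x_d)=u(x',h x_d)$ (undo the anisotropic scaling) so that $\nabla v$ has columns $(\partial_{x'}u, h\,\partial_{x_d}u)$ while $\nabla_h u$ has columns $(\partial_{x'}u, h^{-1}\partial_{x_d}u)$; tracking the $x_d$-column through $\eps$ versus $\eps_h$ gives the gain of one power of $h$ in the symmetric part relative to the full gradient. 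Running the standard Korn inequality on the rescaled field and translating back yields $\|\skw(\nabla_h u)\|_{L^2}\le C\,h^{-1}\|\eps_h(u)\|_{L^2}$, which combined with the trivial bound on the symmetric part gives \eqref{eq:Korn}.

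The main obstacle — and the only place requiring genuine care — is handling the kernel of $\eps$ and verifying that the constant $C$ is uniform in $h\in(0,1]$. One must check that the infinitesimal rigid motions that survive the tangential periodicity (translations in all directions, plus the rotation in the $x_d$-direction about the thin axis, roughly speaking) interact correctly with the scaling: a rigid rotation $v(x)=Rx$ with $R$ skew has $\eps_h(v)$ not identically zero in general once the anisotropic scaling is applied, so one cannot simply subtract it. The clean way around this is to either (i) invoke a version of Korn's inequality on $\Omega$ that already has the right-hand side $\|\eps(v)\|_{L^2}+\|v\|_{L^2}$ (no kernel subtraction) and absorb the $\|v\|_{L^2}$ term — but this costs a Poincaré-type inequality which may fail under pure periodicity unless one has, say, zero mean — or (ii) work directly with the scaled strain: prove $\|\skw(\nabla_h u)\|_{L^2}^2 \le C\big(\|\eps_h(u)\|_{L^2}^2 + \|\skw(\nabla_h u)\|_{L^1}^2\big)$ or similar and bootstrap. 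I expect the cleanest route in this paper is a direct contradiction/compactness argument at fixed $h$ combined with an explicit scaling computation to extract the $h^{-1}$, so that the uniformity in $h$ is transparent from the rescaling rather than from a compactness argument (which would not obviously be uniform). In any case the heart of the matter is purely the bookkeeping of how the factor $h$ enters the $x_d$-derivatives under $\sym$ versus under the full gradient.
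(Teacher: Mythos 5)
Your proposal correctly identifies the non-trivial content (bounding the skew part of $\nabla_h u$), and the initial bookkeeping reduction — rescaling $\Omega$ to $\Omega_h = (-L,L)^{d-1}\times(-\tfrac h2,\tfrac h2)$, under which the inequality becomes $\|\nabla u\|_{L^2(\Omega_h)}\le \tfrac{C}{h}\|\eps(u)\|_{L^2(\Omega_h)}$ — matches the paper's first step. But the central claim, that ``running the standard Korn inequality on the rescaled field and translating back yields $\|\skw(\nabla_h u)\|_{L^2}\le Ch^{-1}\|\eps_h(u)\|_{L^2}$,'' is precisely the assertion to be proved and does not follow from the rescaling. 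After rescaling, one is asking for the Korn constant of the \emph{thin} domain $\Omega_h$; applying ``the standard Korn inequality'' there gives a constant $C(\Omega_h)$ whose $h$-dependence is unknown a priori. Conversely, applying Korn on the fixed domain $\Omega$ controls $\nabla u$ by $\eps(u)$, but $\eps(u)$ and $\eps_h(u)$ weight the $x_d$-derivatives differently, so no algebraic manipulation turns that estimate into \eqref{eq:Korn} with the correct power of $h$; the factor $h^{-1}$ cannot appear ``for free'' from tracking a rescaling through an $h$-independent inequality. Your argument is, in effect, circular at this point.

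The missing idea is a localization-and-telescoping mechanism. The paper (following Kohn--Vogelius) subdivides $\Omega_h$ (for $d=2$) into $N_h\sim 1/h$ squares of sidelength comparable to $h$, applies Korn on each square with a \emph{uniform} constant (the squares are all similar), obtaining a piecewise constant skew field $A(x_1)$ with $\|\nabla u - A\|_{L^2(\Omega_h)}\lesssim \|\eps(u)\|_{L^2(\Omega_h)}$; it then bounds the difference $A(a+\ell_h)-A(a)$ between neighbouring squares by Korn on the union of two squares, and telescopes over the $\sim 1/h$ intervals, which is what produces the factor $N_h^2\sim h^{-2}$ in the squared estimate — i.e.\ the $h^{-1}$ you need. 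Periodicity is then used to remove the remaining constant skew matrix, and $d=3$ is handled by applying the $d=2$ result to the $(x_j,x_3)$-slices plus ordinary Korn in $\Omega'$. Your alternative suggestions — a compactness/contradiction argument at fixed $h$, or a kernel-projection version of Korn — do not address uniformity in $h$ either, as you yourself note; so the proposal does not close the gap.
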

\begin{proof}
For clamped boundary conditions the Korn inequality in thin domains was
 proved by Kohn and Vogelius~\cite[Prop. 4.1]{KohnVogeliusII}. They mention that
 the result also holds without boundary conditions, modulo infinitesimal
 rigid motions.
 For the convenience of the reader we provide a proof of Lemma~\ref{lem:Korn}.

First we prove the case $d=2$. Let $\Omega_h:=(-L,L)^{d-1}{\times}(-\frac{h}{2},\frac{h}{2})$ and let $u\in
H^1(\Omega_h;\R^2)$ satisfy the boundary conditions  $u|_{x_j=-L}=u|_{x_j=L}$,
  $j=1,\ldots, d-1$.
First of all by a simple scaling in $x_d$, (\ref{eq:Korn}) is equivalent to
  \begin{equation}\label{eq:Korn2}
    \|\nabla u \|_{L^2(\Omega_h)} \leq \frac{C}h \|(\nabla u)_{sym}\|_{L^2(\Omega_h)}
  \end{equation}
Let $N_h$ be the integer part of $\frac{2L}{h}$ and let $\ell_h:=\frac{2L}{N_h}$.
We set $J_h:=\{-L+k\ell_h: k=0,\dots,N_h-1\}$. 
By applying Korn inequality on the set $(a, a+\ell_h){\times}(-\frac{h}{2},\frac{h}{2})$
for every $a\in J_h$, we can construct a piecewise constant function $A\colon(-L,L)\to \mtwo$ such that $A(x_d)$ is skew-symmetric and
\begin{equation}\label{pwkorn}
\int_{\Omega_h}|\nabla u- A|^2\, dx \leq C \int_{\Omega_h}|\eps(u)|^2\, dx. 
\end{equation}
Note that, since $\frac{\ell_h}h$ is bounded from above and from below, we can use the same Korn inequality constant on each set 
$(a, a+\ell_h){\times}(-\frac{h}{2},\frac{h}{2})$.

We claim that
\begin{equation}\label{Aest}
\int_{\Omega_h}|A(x_1)-A_0|^2\, dx\leq \frac{C}{h^2}\int_{\Omega_h}|\eps(u)|^2\, dx. 
\end{equation}
where $A_0:=A(-L)$.

Let us fix $a\in J_h$ and let $b:=a+\lambda\ell_h$, with $\lambda\in\{0,1\}$.
By applying Korn inequality on the set $(a, a+2\ell_h){\times}(-\frac{h}{2},\frac{h}{2})$ 
we have that there exists $\tilde A\in\mtwo$ such that
$$
\int_{(a, a+2\ell_h){\times}(-\frac{h}{2},\frac{h}{2})}|\nabla u- \tilde A|\, dx \leq C 
\int_{(a, a+2\ell_h){\times}(-\frac{h}{2},\frac{h}{2})}|\eps(u)|^2\, dx.
$$
From this inequality we deduce
\begin{eqnarray*}
h\ell_h|A(b)-\tilde A|^2
& \leq &
2  \int_{(b, b+\ell_h){\times}(-\frac{h}{2},\frac{h}{2})}|\nabla u- A(x_1)|^2\, dx
\\
& & + 2 \int_{(b, b+\ell_h){\times}(-\frac{h}{2},\frac{h}{2})}|\nabla u-\tilde A|^2\, dx
\\
& \leq & C \int_{(a, a+2\ell_h){\times}(-\frac{h}{2},\frac{h}{2})}|\eps(u)|^2\, dx.
\end{eqnarray*}
Combining the previous inequality for $\lambda=0$ and $\lambda=1$, we obtain
\begin{eqnarray*}
h\ell_h|A(a)-A(b)|^2
& \leq & 2h\ell_h( |A(a)-\tilde A|^2 + |A(b)-\tilde A|^2)
\\
& \leq & C \int_{(a, a+2\ell_h){\times}(-\frac{h}{2},\frac{h}{2})}|\eps(u)|^2\, dx.
\end{eqnarray*}
As $A$ is constant on each interval $(a, a+\ell_h)$, this is equivalent to say that
\begin{equation}\label{diffquot}
\int_{(a, a+\ell_h){\times}(-\frac{h}{2},\frac{h}{2})}|A(x_1+\ell_h)-A(x_1)|^2\, dx
\leq C \int_{(a, a+2\ell_h){\times}(-\frac{h}{2},\frac{h}{2})}|\eps(u)|^2\, dx.
\end{equation}

Let us set $I_{k,j}:=-L+\ell_h(k,k+j)$.
By convexity we have the following estimate: 
\begin{eqnarray*}
\lefteqn{\int_{\Omega_h}|A(x_1)-A_0|^2\, dx =
h\sum_{k=0}^{N_h-1}\int_{I_{k,1}}|A(x_1)-A_0|^2\, dx_1}
\\
& = & h\sum_{k=0}^{N_h-1}\int_{I_{k,1}}
\Big|\sum_{m=0}^{k-1}\big( A(x_1-m\ell_h)-A(x_1-(m+1)\ell_h)\big)\Big|^2\, dx_1
\\
& \leq & h\sum_{k=0}^{N_h-1} k \sum_{m=0}^{k-1} \int_{I_{k,1}}
\Big|A(x_1-m\ell_h)-A(x_1-(m+1)\ell_h)\Big|^2\, dx_1.
\end{eqnarray*}
By \eqref{diffquot} we deduce 
$$
\int_{\Omega_h}|A(x_1)-A_0|^2\, dx \leq \sum_{k=0}^{N_h-1} k \sum_{m=0}^{k-1} 
C\int_{I_{k-m-1, 2}{\times}(-\frac{h}{2},\frac{h}{2})}
|\eps(u)|^2\, dx.
$$
It is easy to see that for every $k=0,\dots, N_h-1$
$$
\sum_{m=0}^{k-1} 
\int_{I_{k-m-1, 2}{\times}(-\frac{h}{2},\frac{h}{2})}
|\eps(u)|^2\, dx
\leq 2 \int_{\Omega_h} |\eps(u)|^2\, dx.
$$
Therefore, we conclude that
$$
\int_{\Omega_h}|A(x_1)-A_0|^2\, dx \leq C N_h^2\int_{\Omega_h} |\eps(u)|^2\, dx,
$$
which proves claim \eqref{Aest}.

Combining \eqref{pwkorn} and \eqref{Aest}, we conclude that for every $u\in H^1(\Omega_h;\R^2)$
there exists a constant skew-symmetric $A_0\in\mtwo$ such that
$$
\int_{\Omega_h}|\nabla u-A_0|^2\, dx\leq \frac{C}{h^2}\int_{\Omega_h}|\eps(u)|^2\, dx.
$$

Since 
$$
\int_{\Omega_h}\Big|\frac1{|\Omega_h|}\fint_{\Omega_h}({\rm skw}\nabla u)\, dx -A_0 \Big|^2\, dx \leq
\int_{\Omega_h}|({\rm skw}\nabla u)-A_0|^2\,dx ,
$$ 
we also have that
\begin{equation}\label{korn2}
\int_{\Omega_h}\Big|\nabla u-\frac1{|\Omega_h|}\fint_{\Omega_h}({\rm skw}\nabla u)\Big|^2\, dx
\leq \frac{C}{h^2}\int_{\Omega_h}|\eps(u)|^2\, dx 
\end{equation}
for every $u\in H^1(\Omega_h;\R^2)$.

Now, if $u$ is periodic in tangential direction, then
\begin{eqnarray*}
\int_{\Omega_h}\Big|\frac1{|\Omega_h|}\fint_{\Omega_h}({\rm skw}\nabla u)\Big|^2\,dx 
& = &
\int_{\Omega_h}\Big|\frac1{|\Omega_h|}\fint_{\Omega_h}\partial_2 u_1\Big|^2\, dx
\\
& = &\int_{\Omega_h}\Big|\frac1{|\Omega_h|}\fint_{\Omega_h}(\partial_2 u_1+\partial_1 u_2)\Big|^2\, dx
\\
& \leq & \int_{\Omega_h}|\eps(u)|^2\, dx,
\end{eqnarray*}
which, together with \eqref{korn2}, provides us with the desired inequality.

In order to prove the case $d=3$, we use that (\ref{eq:Korn}) for $d=2$
implies
\begin{equation*}
  \left\|
    \begin{pmatrix}
      \partial_{x_j}\\ \frac1{h} \partial_{x_3}
    \end{pmatrix}
    \begin{pmatrix}
      u_j\\ u_3
    \end{pmatrix}
\right\|_{L^2(\Omega)} \leq \frac{C}h   \left\|
    \left(\begin{pmatrix}
      \partial_{x_j}\\ \frac1{h} \partial_{x_3}
    \end{pmatrix}
    \begin{pmatrix}
      u_j\\u_3
    \end{pmatrix}
\right)_{sym} 
\right\|_{L^2(\Omega)} \leq \frac{C}h\|(\nabla_h u)_{sym}\|_{L^2(\Omega)} 
\end{equation*}
for $j=1,2$ and any $u\in H^1(\Omega)^3$. Moreover, applying Korn's inequality in
$(-L,L)^2$ with periodic boundary conditions, we obtain
\begin{equation*}
  \|\nabla_{x'} u'\|_{L^2(\Omega)} \leq C \|(\nabla_{x'}
  u')_{sym}\|_{L^2(\Omega)}
  \leq C\|(\nabla_x u)_{sym}\|_{L^2(\Omega)},
\end{equation*}
where $u'= (u_1,u_2)^T$. Altogether this proves (\ref{eq:Korn}) for $d=3$.
\end{proof}
\begin{remark}
  The latter lemma shows that $\left\|\frac1h\eps_h(u)\right\|_{L^2(\Omega)}$ is
    equivalent to $\|\nabla_h u\|_{L^2_h(\Omega)}$ with constants independent
    of $0<h\leq 1$.
\end{remark}
 We denote
\begin{equation*}
  H^m_{per}(\Omega)= \left\{f \in H^m(\Omega): \partial_x^\alpha f|_{x_j=-L}= \partial_x^\alpha f|_{x_j=L}, j=1,\ldots,d-1, |\alpha|\leq m-1\right\}.
\end{equation*}

 Throughout this contribution the following anisotropic variant of $H^m_{per}(\Omega)$ will be important:
   \begin{eqnarray*}
     H^{m_1,m_2}(\Omega) &=& \left\{u\in L^2(\Omega): \nabla_{x'}^k\partial_{x_d}^l u\in
       L^2(\Omega),k=0,\ldots,m_1, l=0,\ldots, m_2\right.,\\
      && \left. \ \partial_{x'}^\alpha \partial_{x_d}^lu|_{x_j=-L}=\partial_{x'}^\alpha \partial_{x_d}^lu|_{x_j=L}, j \leq d-1, |\alpha|\leq m_1-1, l\leq m_2\right\}
   \end{eqnarray*}
   where $m_1\in \N,m_2\in\N_0$. The spaces are equipped with the inner product
   \begin{equation*}
     (f,g)_{H^{m_1,m_2}}= \sum_{|\alpha|\leq m_1,k=0,\ldots, m_2} (\partial_{x'}^\alpha \partial_{x_d}^k f,\partial_{x'}^\alpha \partial_{x_d}^k g)_{L^2(\Omega)}
   \end{equation*}
   Please note that periodic boundary conditions are included in the spaces $H^{m_1,m_2}(\Omega)$ in contrast to the space $H^m(\Omega)$, where we denote them by a subscript ``$per$'' in order to be consistent with the usual definition of $H^m(\Omega)$. Moreover, note that $f\in H^{m_1,m_2}(\Omega)$ if and only if its periodic extension $\tilde{f}$ (w.r.t. $x_j$, $j=1,\ldots,d-1$) satisfies
   \begin{equation*}
     \nabla_{x'}^\alpha \partial_{x_d}^l \tilde{f}\in L^2_{loc}(\R^{d-1}\times(-\tfrac12,\tfrac12))\quad \text{for all}\ |\alpha|\leq m_1,l=0,\ldots, m_2.
   \end{equation*}
Therefore we can also identify $f\in H^{m_1,m_2}(\Omega)$ with a function $f\colon \R^{d-1}\times (-\frac12,\frac12)$ that is $2L$-periodic in $x_j$, $j=1,\ldots,d-1$ and satisfies the latter smoothness condition.

Similarly, an anisotropic variant of $L^p$ will be useful:
\begin{eqnarray*}
  L^{p,q}(\Omega)&=& \left\{u\colon \Omega\to \R: \|u(x_1,.)\|_{L^q(-\frac12,\frac12)}\in L^p((-L,L)^{d-1})\right\}
\end{eqnarray*}
where $1\leq p,q\leq \infty$
equipped with the norm
\begin{equation*}
  \|u\|_{L^{p,q}} = \left\|\|u(x_1,.)\|_{L^q(-\frac12,\frac12)}\right\|_{L^p((-L,L)^{d-1})}.
\end{equation*}
We note that from the usual H\"older inequality it follows that
\begin{equation*}
  \|fg\|_{L^{p,q}(\Omega)}\leq \|f\|_{L^{p_1,q_1}(\Omega)}\|g\|_{L^{p_2,q_2}(\Omega)},
\end{equation*}
for all $1\leq p_1,q_1,p_2,q_2\leq \infty$ such that
\begin{equation*}
  \frac1p = \frac1{p_1} +\frac1{p_2},\qquad   \frac1q = \frac1{q_1} +\frac1{q_2}.
\end{equation*}
\begin{lemma}\label{lem:Embeddings}
  Let $d=2,3$. Then
  \begin{equation*}
    H^{1,0}(\Omega) \hookrightarrow L^{p,2}(\Omega), \quad
    H^{2,0}(\Omega) \hookrightarrow L^{\infty,2}(\Omega), \quad
    H^1(\Omega) \hookrightarrow L^{4,\infty}(\Omega) \quad
  \end{equation*}
continuously for $p=\infty$ if $d=2$ and any $1\leq p<\infty$ if $d=3$. 
Finally, let
\begin{equation*}
  V(\Omega):= H^{1,1}(\Omega)\cap H^{2,0}(\Omega).
\end{equation*}
Then $V(\Omega)\hookrightarrow C^0(\ol{\Omega})$ continuously.
\end{lemma}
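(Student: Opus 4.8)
The plan is to prove all four statements by the same device: slice $\Omega=\Omega'\times I$, $I=(-\tfrac12,\tfrac12)$, in the thin variable $x_d$; observe that the fibrewise norms of $u$ inherit tangential Sobolev regularity; apply a Sobolev (or Gagliardo--Nirenberg) inequality on the $(d-1)$-dimensional cross section $\Omega'$; and reassemble using Minkowski's integral inequality together with the one-dimensional Agmon inequality $\|f\|_{L^\infty(I)}^2\le C\|f\|_{L^2(I)}\|f\|_{H^1(I)}$ for the $L^\infty$-in-$x_d$ norms. Throughout, the periodic boundary conditions are harmless and can be accommodated by regarding $\Omega'$ as the torus $\mathbb T^{d-1}$.

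For the first two embeddings, put $\alpha(x'):=\|u(x',\cdot)\|_{L^2(I)}$. Differentiating $\alpha^2$ under the integral sign and applying the Cauchy--Schwarz inequality gives, for a multi-index $\beta$,
$|\partial_{x'}^\beta(\alpha^2)(x')|\le C\sum_{\beta_1+\beta_2=\beta}\int_I|\partial_{x'}^{\beta_1}u(x',\cdot)|\,|\partial_{x'}^{\beta_2}u(x',\cdot)|\,dx_d$;
hence $u\in H^{1,0}(\Omega)$ forces $\alpha\in H^1(\Omega')$ and $u\in H^{2,0}(\Omega)$ forces $\alpha^2\in W^{2,1}(\Omega')$, with the obvious norm bounds. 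Then $H^{1,0}(\Omega)\hookrightarrow L^{p,2}(\Omega)$ follows from $H^1(\Omega')\hookrightarrow L^p(\Omega')$ (all $p<\infty$ when $\dim\Omega'=2$, and $p=\infty$ when $\dim\Omega'=1$), and $H^{2,0}(\Omega)\hookrightarrow L^{\infty,2}(\Omega)$ follows from the borderline embedding $W^{d-1,1}(\Omega')\hookrightarrow C^0(\overline{\Omega'})\hookrightarrow L^\infty(\Omega')$, valid since $d-1\le 2$. For $H^1(\Omega)\hookrightarrow L^{4,\infty}(\Omega)$ when $d=2$: with $\gamma(x_1):=\|\partial_{x_d}u(x_1,\cdot)\|_{L^2(I)}$, Agmon gives $\|u(x_1,\cdot)\|_{L^\infty(I)}^2\le C\alpha(x_1)\bigl(\alpha(x_1)+\gamma(x_1)\bigr)$, whence
\[
\|u\|_{L^{4,\infty}}^4\le C\int_{-L}^{L}\bigl(\alpha^4+\alpha^2\gamma^2\bigr)\,dx_1\le C\|\alpha\|_{L^\infty((-L,L))}^2\bigl(\|\alpha\|_{L^2}^2+\|\gamma\|_{L^2}^2\bigr)\le C\|u\|_{H^1(\Omega)}^4,
\]
since $\alpha\in H^1((-L,L))\hookrightarrow L^\infty$ and $\|\alpha\|_{L^2}^2+\|\gamma\|_{L^2}^2=\|u\|_{L^2(\Omega)}^2+\|\partial_{x_d}u\|_{L^2(\Omega)}^2$.

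I expect the genuine obstacle to be $H^1(\Omega)\hookrightarrow L^{4,\infty}(\Omega)$ in the case $d=3$, which is a \emph{critical} anisotropic Sobolev embedding: the one available derivative must be used simultaneously to improve the tangential integrability from $L^2$ to $L^4$ on the $2$-dimensional cross section and the normal integrability from $L^2$ to $L^\infty$ on the $1$-dimensional fibre. The slicing argument above reduces the estimate to controlling $\int_{\Omega'}\alpha(x')^2\gamma(x')^2\,dx'$, and this term is \emph{not} closable by H\"older's inequality alone: $\gamma$ lies only in $L^2(\Omega')$ with no spatial gain, while $\alpha\in H^1(\Omega')$ sits at the borderline of boundedness ($\alpha\in\exp L^2$ but not $L^\infty$), and the pairing $\int_{\Omega'}\alpha^2\gamma^2$ of an $\exp L$-function with an $L^1$-function need not be finite in general. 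One must instead prove the sharp anisotropic Gagliardo--Nirenberg inequality $\|u\|_{L^{4,\infty}(\Omega)}\le C\|u\|_{H^1(\Omega)}$ directly, by a Loomis--Whitney-type argument that distributes the three coordinate derivatives across the two integration blocks (one may alternatively invoke the theory of anisotropic / mixed-norm Sobolev spaces); this is the technical heart of the lemma.

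Finally, for $V(\Omega)=H^{1,1}(\Omega)\cap H^{2,0}(\Omega)\hookrightarrow C^0(\overline{\Omega})$ I would argue by interpolation. Identify $H^{2,0}(\Omega)=H^2_{per}(\Omega';L^2(I))$ and $H^{1,1}(\Omega)=H^1_{per}(\Omega';H^1(I))$. Since $V(\Omega)=H^{2,0}(\Omega)\cap H^{1,1}(\Omega)$ embeds into every complex interpolation space $[H^{2,0}(\Omega),H^{1,1}(\Omega)]_\theta$, and $[H^{s_0}(\Omega';Y_0),H^{s_1}(\Omega';Y_1)]_\theta=H^{(1-\theta)s_0+\theta s_1}(\Omega';[Y_0,Y_1]_\theta)$ together with $[L^2(I),H^1(I)]_\theta=H^\theta(I)$ (the analogue of (\ref{eq:InterpolHs}) for complex interpolation), we get $V(\Omega)\hookrightarrow H^{2-\theta}_{per}(\Omega';H^\theta(I))$ for every $\theta\in(0,1)$. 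Choosing $\theta\in(\tfrac12,1)$ makes $2-\theta>\tfrac{\dim\Omega'}{2}$ and $\theta>\tfrac12$, so the vector-valued Sobolev embedding gives $H^{2-\theta}(\Omega';H^\theta(I))\hookrightarrow C^0(\overline{\Omega'};H^\theta(I))\hookrightarrow C^0(\overline{\Omega'};C^0(\overline I))=C^0(\overline{\Omega})$. (When $d=2$ this is even more direct: $H^{1,1}(\Omega)=H^1(\Omega';H^1(I))\hookrightarrow C^0(\overline{\Omega'};C^0(\overline I))=C^0(\overline{\Omega})$ already, without using $H^{2,0}$.)
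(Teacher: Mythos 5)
Your treatment of the first two embeddings follows the paper's route (slice, take the fibrewise $L^2$-norm $\alpha(x')=\|u(x',\cdot)\|_{L^2(I)}$, apply a Sobolev embedding on the cross-section $\Omega'$), with the added care of differentiating $\alpha^2$ rather than $\alpha$, which neatly avoids the non-smoothness of $\alpha$ at its zeros. The $d=2$ case of the third embedding and the embedding $V(\Omega)\hookrightarrow C^0(\overline{\Omega})$ are also proved correctly but by a different route: you use a one-dimensional Agmon inequality, respectively complex interpolation of vector-valued Sobolev spaces, whereas the paper for both appeals to its real-interpolation embedding (2.2), $H^1(I;X_0)\cap L^2(I;X_1)\hookrightarrow BUC(\overline{I};(X_0,X_1)_{1/2,2})$, with the thin variable $x_d$ playing the role of time (taking $X_0=L^2(\Omega')$, $X_1=H^1(\Omega')$ for the third embedding and $X_0=H^1(\Omega')$, $X_1=H^{d-1}(\Omega')$ for the fourth). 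These routes are essentially interchangeable here.

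The one genuine gap is $H^1(\Omega)\hookrightarrow L^{4,\infty}(\Omega)$ for $d=3$, which you correctly flag as the critical case but leave unproved. Your diagnosis is right: the slice-and-H\"older estimate stalls on $\int_{\Omega'}\alpha^2\gamma^2$ because $\alpha\in H^1(\Omega')$ is only at the threshold of boundedness on the two-dimensional cross-section while $\gamma\in L^2(\Omega')$ gains nothing. No Loomis--Whitney or mixed-norm machinery is needed, however; the fix is to apply the fundamental theorem of calculus to $|u|^4$ instead of $|u|^2$, so the $x_d$-derivative falls on $|u|^3\partial_{x_d}u$ and the isotropic Sobolev embedding $H^1(\Omega)\hookrightarrow L^6(\Omega)$ absorbs the cube:
\begin{align*}
  \|u\|_{L^{4,\infty}(\Omega)}^4 &=\int_{\Omega'}\sup_{x_d\in I}|u|^4\,dx'
  \le 4\int_{\Omega}|u|^3|\partial_{x_d}u|\,dx+\int_{\Omega}|u|^4\,dx\\
  &\le 4\|u\|_{L^6(\Omega)}^3\|\partial_{x_d}u\|_{L^2(\Omega)}+\|u\|_{L^4(\Omega)}^4
  \le C\|u\|_{H^1(\Omega)}^4.
\end{align*}
Your instinct about the delicacy of this case is in fact vindicated by the paper itself: its chain $H^1(\Omega)\hookrightarrow BUC(\overline{I};H^{1/2}(\Omega'))\hookrightarrow BUC(\overline{I};L^4(\Omega'))$ controls $\sup_{x_d}\|u(\cdot,x_d)\|_{L^4(\Omega')}$, which is the mixed norm with the integrations in the \emph{opposite} order from $\|u\|_{L^{4,\infty}}=\|\sup_{x_d}|u(\cdot,x_d)|\|_{L^4(\Omega')}$; by Minkowski's integral inequality the latter dominates the former, so the paper's argument as written yields a weaker conclusion than the one stated, and an argument such as the one above is genuinely required.
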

\begin{proof}
  The first embedding follows from $H^1(\Omega')\hookrightarrow L^p(\Omega')$ and the  second from $H^2(\Omega')\hookrightarrow L^\infty(\Omega')$ since $d=2,3$ and $\Omega'= (-L,L)^{d-1}$. The third embedding follows from
\begin{equation*}
  H^1(-\tfrac12,\tfrac12;L^2(\Omega'))\cap L^2(-\tfrac12, \tfrac12; H^1(\Omega'))\hookrightarrow BUC([-\tfrac12, \tfrac12];H^{\frac12}(\Omega'))
\end{equation*}
and $H^{\frac12}(\Omega')\hookrightarrow L^4(\Omega')$. Finally, the last embedding follows from
  \begin{eqnarray*}
    \lefteqn{L^2(-\tfrac12,\tfrac12; H^{1+k}((-L,L)^{d-1}))\cap
  H^1(-\tfrac12,\tfrac12;H^1((-L,L)^{d-1}))}\\
&& \hookrightarrow BUC([-\tfrac12,\tfrac12];
  H^{1+\frac{k}2}((-L,L)^{d-1}))\hookrightarrow C^0(\ol{\Omega})
  \end{eqnarray*}
where $k=d-2$
  because of (\ref{eq:BUCEmbedding}) and Sobolev
  embeddings.
\end{proof}
\begin{remark}
  The spaces $H^{1,0}(\Omega)$ and $V(\Omega)$ are two fundamental spaces, which will be used to solve the evolution equation. We note that
  \begin{equation*}
    f\in V(\Omega)\quad \Leftrightarrow \quad f, \nabla f \in H^{1,0}(\Omega).
  \end{equation*}
Most of the time we will estimate $f\in V(\Omega)$ by the $h$-dependent norm
\begin{equation*}
\|f\|_{V_h}:=  \|(f,\nabla_h f)\|_{H^{1,0}(\Omega)}.
\end{equation*}
\end{remark}
 Because of the embedding $V(\Omega)\hookrightarrow L^\infty(\Omega)$, we are able to show that $V(\Omega)$ is an algebra with respect to point-wise multiplication. More precisely, we
   obtain:
   \begin{corollary}\label{cor:Algebra} 
Let $d=2,3$. Then there is some $C=C(\Omega)>0$ such that
     \begin{eqnarray}\label{eq:ProdEstim'}
       \|(u_1\cdot v, \nabla_h (u_1\cdot v)\|_{L^2}&\leq& C\|(u_1, \nabla_h u_1)\|_{H^{1,0}}\|(v, \nabla_h v)\|_{L^2}\\\label{eq:ProdEstim}
       \|(u_1\cdot u_2, \nabla_h (u_1\cdot u_2)\|_{H^{1,0}}&\leq& C\|(u_1, \nabla_h u_1)\|_{H^{1,0}}\|(u_2, \nabla_h u_2)\|_{H^{1,0}}
     \end{eqnarray}
for all $u_1,u_2\in V(\Omega)$, $v\in H^1_{per}(\Omega)$ uniformly in $0<h\leq 1$. Moreover,
     if $F\in C^{2}(\ol{U})$ for some open $U\subset \R^N$,
     $N\in\N$, and $u\in V(\Omega)^N$, then for every $R>0$ there is some $C(R)$ independent of $u$ such that
     \begin{equation}\label{eq:CompEstim}
       \|(F(u), \nabla_h F(u))\|_{H^{1,0}(\Omega)} \leq C(R)\qquad \text{if}\ \|(u,\nabla_h u)\|_{H^{1,0}(\Omega)}\leq R
     \end{equation}
     uniformly in $0<h\leq 1$ and if $u(x)\in \ol{U}$ for all $x\in\overline{\Omega}$.
   \end{corollary}
   \begin{proof}
     First of all (\ref{eq:ProdEstim'}) can be derived in a straight forward manner using Lemma~\ref{lem:Embeddings}. Moreover, (\ref{eq:ProdEstim})  follows from (\ref{eq:CompEstim}) by
     first considering $\|u_1\|_{V_h}, \|u_2\|_{V_h}\leq 1$ and
     $F(u_1,u_2) = u_1\cdot u_2$ together with a scaling argument.

     Hence it only remains to prove (\ref{eq:CompEstim}).      
     First of all, 
     \begin{eqnarray*}
       \partial_{x_j} F(u) &=& DF(u) \partial_{x_j} u \\
       \partial_{x_j}\partial_{x_k} F(u) &=& DF(u) \partial_{x_j}\partial_{x_k}
       u + D^2F(u)(\partial_{x_j}u,\partial_{x_k}u)
     \end{eqnarray*}
     where $DF(u),D^2F(u)$ are uniformly bounded since $u\in C^0(\ol{\Omega})$
     and $u(x) \in \ol{U}$ for all $x\in \ol\Omega$. Therefore
     $\nabla_h F(u) \in L^2(\Omega)$ can be easily estimated. Hence it only remains to consider the second order derivatives. 
      To this end we use that
     \begin{eqnarray*}
       \lefteqn{\left\|D^2F(u)(\partial_{x_j}u,\frac1h\partial_{x_d}u)\right\|_{L^2(\Omega)}\leq C
       \|\partial_{x_j}u\|_{L^{4,\infty}(\Omega)}
       \left\|\frac1h\partial_{x_d}u\right\|_{L^{4,2}(\Omega)}}\\
       &\leq & C\|\partial_{x_j} u\|_{H^1(\Omega)}\left\|\frac1h\partial_{x_d}u\right\|_{H^{1,0}(\Omega)}
\leq C'(R)\|(u, \nabla_h u)\|_{H^{1,0}(\Omega)}
     \end{eqnarray*}
     for all $j=1,\ldots,d-1$ due to Lemma~\ref{lem:Embeddings}. Similarly,
     \begin{eqnarray*}
       \left\|D^2F(u)(\partial_{x_j}u,\partial_{x_k}u)\right\|_{L^2(\Omega)} &\leq& C'(R)\|(u, \nabla_h u)\|_{H^{1,0}(\Omega)}
     \end{eqnarray*}
for all $j,k=1,\ldots, d-1$. From these estimates the statement of the corollary easily follows.
   \end{proof}

For the following let 
$W\colon B_r(I)\subset\R^{d\times d}\to \R$ be a smooth function for some $r>0$ which is frame invariant, i.e., $W(RF)=W(F)$ for every $F\in \R^{d\times d}$ and $R\in SO(d)$, and such that $DW(I)=0$ and $D^2W(I)\colon \R^{d\times d}\to\R^{d\times d}$ is positive definite on symmetric matrices. Moreover, we set
$\widetilde{W}(G)= W(I+G)$.
The estimates of derivatives of $D^2 \widetilde{W}(\nabla_h u)$ will be essential for the proof of our main result and will be based on the following lemma:
\begin{lemma}\label{lem:D3G}
  There is some constant $C>0$, $\eps>0$, and $A\in C^\infty(\ol{B_\eps(0)};\mathcal{L}^3(\R^{d\times d}))$ such that for all $G\in
  \R^{d\times d}$ with $|G|\leq \eps$ we have
  \begin{equation*}
    D^3\widetilde{W}(G) = D^3\widetilde{W}(0) + A(G),
  \end{equation*}
  where
  \begin{alignat*}{2}
    |D^3\widetilde{W}(0)|_h&\leq Ch &\quad& \text{for all}\ 0<h\leq 1,\\
    |A(G)| &\leq C|G| &\quad& \text{for all}\ |G|\leq \eps. 
  \end{alignat*}
\end{lemma}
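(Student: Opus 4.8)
The argument has two independent pieces: the choice of $A$, which is essentially a tautology, and the estimate on $D^3\widetilde W(0)=D^3W(I)$, which is the heart of the matter.

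For the first piece I would simply set $A(G):=D^3\widetilde W(G)-D^3\widetilde W(0)$. Since $W$ is smooth on $B_r(I)$, $\widetilde W$ is smooth on $B_r(0)$; choosing any $\eps\in(0,r)$ we obtain $A\in C^\infty(\ol{B_\eps(0)};\mathcal{L}^3(\R^{d\times d}))$, the identity $D^3\widetilde W(G)=D^3\widetilde W(0)+A(G)$ is trivial, and from $A(G)=\int_0^1 D^4\widetilde W(tG)[G]\,dt$ together with the boundedness of $D^4\widetilde W$ on the compact set $\ol{B_\eps(0)}$ we read off $|A(G)|\le C|G|$ in the unscaled norm. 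So only the bound $|D^3\widetilde W(0)|_h\le Ch$ remains.

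For that, the plan is to use frame invariance to show that the (symmetric) trilinear form $D^3W(I)$ \emph{vanishes on every triple of skew-symmetric matrices}, and then to note that the scaled constraint $|A_j|_h\le1$ forces $|\sym A_j|\le h$. Differentiating $W(e^{tS}F)=W(F)$ ($S$ skew) at $t=0$ gives $DW(F):SF=0$ for all $F\in B_r(I)$ and all skew $S$; differentiating this once in $F$ and setting $F=I$ (using $DW(I)=0$) recovers the standard fact $D^2W(I)(H,S)=0$ for all $H$ and all skew $S$, hence by symmetry of $D^2W(I)$ this form annihilates any pair one of whose entries is skew. Differentiating a second time in $F$ and setting $F=I$ yields
\[
 D^3W(I)(K,H,S)=-D^2W(I)(H,SK)-D^2W(I)(K,SH)
\]
for all $H,K$ and skew $S$; taking $H,K$ skew as well makes the right-hand side vanish, so $D^3W(I)(S_1,S_2,S_3)=0$ for all skew $S_1,S_2,S_3$. (Alternatively, frame invariance gives $W(F)=V(F^TF)$ near $I$ with $V$ smooth, and the chain rule for $F\mapsto F^TF$ expresses $D^3W(I)(H,K,L)$ as a sum of terms each carrying a factor $\sym(\cdot)$ of one of $H,K,L$, which again kills a skew triple.)

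To conclude, given $A_1,A_2,A_3$ with $|A_j|_h\le1$, write $A_j=E_j+S_j$ with $E_j=\sym A_j$, $S_j=\skw A_j$; since $h^{-2}|E_j|^2+|S_j|^2=|A_j|_h^2\le1$ we have $|E_j|\le h$ and $|S_j|\le1$. Expanding $D^3W(I)(A_1,A_2,A_3)$ by trilinearity into $8$ terms, the term $D^3W(I)(S_1,S_2,S_3)$ is zero, while each of the remaining seven carries at least one argument $E_j$ with $|E_j|\le h$ and the other arguments bounded by $1$ (as $h\le1$), so the whole sum is bounded by $7\,|D^3W(I)|\,h$. Taking the supremum over $|A_j|_h\le1$ gives $|D^3\widetilde W(0)|_h\le Ch$, and enlarging $C$ if necessary completes the proof. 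The only nonroutine step is the vanishing of $D^3W(I)$ on skew triples; everything else is bookkeeping with the $h$-scaled norm.
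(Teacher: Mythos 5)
Your proof is correct, and it takes a genuinely different (though ultimately related) route from the paper. The paper proves the bound $|D^3\widetilde W(0)|_h\leq Ch$ by writing $W(I+G)=\widehat W(I+2\sym G+G^TG)$ via the polar decomposition, applying the chain rule three times, and observing in the resulting explicit formula for $D^3W(I+G)$ that, after evaluation at $G=0$, every summand carries at least one symmetrized argument $2H_{j,s}$; thus each term picks up a factor $h$ once $|H_j|_h\leq1$. You instead work infinitesimally: differentiating the identity $DW(F)(SF)=0$ (which encodes frame invariance for skew $S$) twice in $F$ and evaluating at $F=I$ gives $D^3W(I)(H,K,S)=-D^2W(I)(K,SH)-D^2W(I)(H,SK)$, and since $D^2W(I)$ annihilates any pair containing a skew entry, $D^3W(I)$ vanishes on triples of skew matrices; the bound then follows by expanding $D^3W(I)(E_1+S_1,E_2+S_2,E_3+S_3)$ trilinearly, since $|A_j|_h\leq1$ forces $|E_j|\leq h$ and $|S_j|\leq1$. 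Your route has the advantage of isolating the algebraic fact (vanishing on skew triples) that really drives the estimate and of avoiding the explicit chain-rule formula, while the paper's route is a one-shot computation that also produces the formulas needed elsewhere in the text; you also correctly note the $W=V(F^TF)$ alternative as essentially the paper's argument. The treatment of $A(G)$ is standard and fine. All steps check out.
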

\begin{proof}
  First of all, if $|G|\leq \eps$ for $\eps>0$ sufficiently small, we can use a polar decomposition $I+G= RU$, where $R\in SO(d)$ and $U$ is symmetric and positive definite such that $U^2=
  (I+G)^T(I+G)$. From frame invariance we conclude
  that $W(I+G)=W(U)= \widehat{W}(U^2)=\widehat{W}(I+2\sym G+G^TG)$ for some smooth $\widehat{W}\colon V \subset \R^{d\times d}\to \R$, where $V$ is some open neighborhood of $I$. For this proof we denote $A_s= \sym A$. Straight-forward calculations yield
  \begin{eqnarray*}
    DW(F)(H) &=& D\widehat{W}(U^2)(2 H_s + H^TG+G^TH)\\
    D^2W(F)(H_1,H_2) &=& D^2\widehat{W}(U^2)(2H_{1,s} + H_1^TG+G^TH_1,2
    H_{2,s} + H_2^TG+G^TH_2)\\
    && + D\widehat{W}(U^2)(H_1^TH_2+H_2^TH_1)
\end{eqnarray*}
and
\begin{eqnarray*}
    \lefteqn{D^3W(F)(H_1,H_2,H_3)=}\\
    && D^3\widehat{W}(U^2)(2H_{1,s} + H_1^TG+G^TH_1,2
    H_{2,s} + H_2^TG+G^TH_2,2H_{3,s} + H_3^TG+G^TH_3 )\\
    && + D^2\widehat{W}(U^2)(H_1^TH_2+H_2^TH_1, 2H_{3,s}+ H_3^TG+G^TH_3)\\
&&    + D^2\widehat{W}(U^2)(H_1^TH_3+H_3^TH_1, 2H_{2,s}+ H_2^TG+G^TH_2)\\
&&+ D^2\widehat{W}(U^2)(H_2^TH_3+H_3^TH_2, 2H_{1,s}+ H_1^TG+G^TH_1)
  \end{eqnarray*}
  where $F=I+G$. 
From the latter identities the statements immediately follow.
\end{proof}
For the following we denote
\begin{eqnarray*}
  \|A\|_{H^{m_1,m_2}_h} &:=& \left(\sum_{|\alpha|\leq m_1, j=0,\ldots, m_2} \|\partial_{x'}^\alpha \partial_{x_d}^j A\|_{L^2_h(\Omega)}^2\right)^{\frac12}\\
  \|A\|_{H^{m}_h} &:=& \left(\sum_{|\alpha|\leq m} \|\partial_{x}^\alpha A\|_{L^2_h(\Omega)}^2\right)^{\frac12}
\end{eqnarray*}
where $m,m_1,m_2\in \N_0$ and $A\in H^{m_1,m_2}(\Omega)^{d\times d}$, $A\in H^{m}(\Omega)^{d\times d}$, respectively.
\begin{corollary}\label{cor:D3W} 
  There are some $\eps,C>0$ such that
  \begin{eqnarray}\nonumber
    \lefteqn{\|D^3 \widetilde{W}(Z)(Y_1, Y_2, Y_3)\|_{L^1(\Omega)}}\\\label{eq:D3WEstim}
    &\leq & C h \left(\left\|Y_1 \right\|_{H^{1,1}_h} + \left\|Y_1 \right\|_{H^{2,0}_h} \right)\left\|Y_2\right\|_{L^2_h(\Omega)} \left\|Y_3\right\|_{L^2_h(\Omega)} 
  \end{eqnarray}
  for all $Y_1\in V(\Omega)^{d\times d}, Y_2,Y_3 \in
  L^2(\Omega)^{d\times d} $, $0<h\leq 1$ and $\|Z\|_{L^\infty(\Omega)}\leq \min (\eps,h)$ and 
  \begin{eqnarray}\nonumber
    \lefteqn{\|D^3 \widetilde{W}(Z)(Y_1, Y_2, Y_3)\|_{L^1(\Omega)}}\\\label{eq:D3WEstimb}
    &\leq & C h \left\|Y_1\right\|_{H^1_h(\Omega)}\left\|Y_2\right\|_{H^{1,0}_h(\Omega)} \left\|Y_3\right\|_{L^2_h(\Omega)} 
  \end{eqnarray}
  for all $Y_1\in H^1(\Omega)^{d\times d}, Y_2\in H^{1,0}(\Omega)^{d\times d},Y_3 \in
  L^2(\Omega)^{d\times d} $, $0<h\leq 1$ and $Z\in L^\infty(\Omega)^{d\times d}$ with $\|Z\|_{L^\infty(\Omega)}\leq \min(\eps,h)$.
\end{corollary}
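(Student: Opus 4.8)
The plan is to reduce both estimates to a single pointwise bound on the integrand, obtained from Lemma~\ref{lem:D3G}, and then to integrate using Hölder's inequality and the embeddings of Lemma~\ref{lem:Embeddings}, everything being phrased consistently with the scaled norms $|\cdot|_h$.

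\emph{Step 1: the pointwise bound.} Let $\eps>0$ be the constant from Lemma~\ref{lem:D3G}. Since $\|Z\|_{L^\infty(\Omega)}\le\min(\eps,h)\le\eps$, that lemma applies for a.e.\ $x\in\Omega$ and gives $D^3\widetilde W(Z(x))=D^3\widetilde W(0)+A(Z(x))$. Evaluating on $(Y_1(x),Y_2(x),Y_3(x))$, the first summand is controlled by the scaled operator norm, $|D^3\widetilde W(0)(Y_1,Y_2,Y_3)|\le|D^3\widetilde W(0)|_h\,|Y_1|_h|Y_2|_h|Y_3|_h\le Ch\,|Y_1|_h|Y_2|_h|Y_3|_h$, while for the remainder $|A(Z(x))(Y_1,Y_2,Y_3)|\le C|Z(x)|\,|Y_1|\,|Y_2|\,|Y_3|\le Ch\,|Y_1|_h|Y_2|_h|Y_3|_h$, where we used $|Z(x)|\le h$ and $|B|=|B|_1\le|B|_h$ for $0<h\le1$. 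Adding the two contributions yields the key inequality $|D^3\widetilde W(Z(x))(Y_1(x),Y_2(x),Y_3(x))|\le Ch\,|Y_1(x)|_h|Y_2(x)|_h|Y_3(x)|_h$ for a.e.\ $x\in\Omega$, valid for both sets of hypotheses.

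\emph{Step 2: integration.} Since $|Y|_h^2=\tfrac1{h^2}|\sym Y|^2+|\skw Y|^2$, each (unscaled, $h$-independent) embedding of Lemma~\ref{lem:Embeddings} applied separately to $\sym Y$ and $\skw Y$ upgrades at once to its $h$-scaled form: $\||Y|_h\|_{L^\infty(\Omega)}\le C\big(\|Y\|_{H^{1,1}_h}+\|Y\|_{H^{2,0}_h}\big)$ from $V(\Omega)=H^{1,1}(\Omega)\cap H^{2,0}(\Omega)\hookrightarrow C^0(\overline\Omega)$; $\||Y|_h\|_{L^{4,\infty}(\Omega)}\le C\|Y\|_{H^1_h(\Omega)}$ from $H^1(\Omega)\hookrightarrow L^{4,\infty}(\Omega)$; and $\||Y|_h\|_{L^{4,2}(\Omega)}\le C\|Y\|_{H^{1,0}_h(\Omega)}$ from $H^{1,0}(\Omega)\hookrightarrow L^{p,2}(\Omega)$ with $p=4$ for $d=3$, respectively $L^{\infty,2}(\Omega)\hookrightarrow L^{4,2}(\Omega)$ for $d=2$. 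Now \eqref{eq:D3WEstim} follows from Step~1 together with Hölder's inequality with exponents $(\infty,2,2)$, $\int_\Omega|Y_1|_h|Y_2|_h|Y_3|_h\,\sd x\le\||Y_1|_h\|_{L^\infty(\Omega)}\|Y_2\|_{L^2_h(\Omega)}\|Y_3\|_{L^2_h(\Omega)}$, and the first scaled embedding. For \eqref{eq:D3WEstimb} we use instead the anisotropic Hölder inequality recorded before Lemma~\ref{lem:Embeddings}, with first indices $(4,4,2)$ (so $\tfrac14+\tfrac14+\tfrac12=1$) and second indices $(\infty,2,2)$, giving $\int_\Omega|Y_1|_h|Y_2|_h|Y_3|_h\,\sd x\le\||Y_1|_h\|_{L^{4,\infty}(\Omega)}\||Y_2|_h\|_{L^{4,2}(\Omega)}\||Y_3|_h\|_{L^{2,2}(\Omega)}$, and then the remaining two scaled embeddings together with $L^{2,2}(\Omega)=L^2(\Omega)$.

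\emph{Main obstacle.} The computation itself is routine; the only point demanding care is the consistent bookkeeping of the scaled norms $|\cdot|_h$. One must check that the extra power of $h$ produced by Lemma~\ref{lem:D3G} is not destroyed when the remainder term $A(Z)$ is estimated in the ordinary operator norm — it survives precisely because $|Z|\le h$ and $|\cdot|_1\le|\cdot|_h$ — and that the anisotropic Sobolev embeddings of Lemma~\ref{lem:Embeddings} transfer verbatim to the scaled norms, which they do because $\sym$ and $\skw$ commute with $\partial_{x'}$ and $\partial_{x_d}$.
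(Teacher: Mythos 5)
Your proof is correct and follows the same route the paper has in mind: the paper's proof is just the one-line remark ``follows directly from Lemma~\ref{lem:D3G}, Korn's inequality, and Lemma~\ref{lem:Embeddings},'' and you have supplied exactly the missing details (pointwise bound from Lemma~\ref{lem:D3G} with $|Z|\le h$ and $|\cdot|\le|\cdot|_h$, then anisotropic H\"older plus the $h$-scaled upgrades of the embeddings, which work because $\sym$ and $\skw$ commute with partial derivatives). Your observation that Korn's inequality is not actually used is accurate: it plays no role for generic matrix fields $Y_j$, and the paper's citation of Lemma~\ref{lem:Korn} only becomes relevant later when the corollary is applied with $Y_j=\nabla_h w$, where Korn converts $\|\nabla_h w\|_{L^2_h}$ into $\|\tfrac1h\eps_h(w)\|_{L^2}$.
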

\begin{proof}
  The statement follows directly from Lemma~\ref{lem:D3G}, Korn's
  inequality due to Lemma~\ref{lem:Korn}, and Lemma~\ref{lem:Embeddings}.
\end{proof}

\section{Long-Time Existence for Thin Rods/Plates}
\subsection{Main Result}
\label{sec:MainResult}
We consider
\begin{equation}\label{eq:Evol1}
  \partial_t^2 u_h - \frac1{h^2} \Div_h D\widetilde{W}(\nabla_h u_h) = f_h h^{1+\theta} \qquad \text{in}\ \Omega\times I
\end{equation}
where $\widetilde{W}(G)= W(I+G)$,  $\Omega= (-L,L)^{d-1}\times
(-\frac12,\frac12)$, $\beta=4+2\theta$, which is equivalent to $\theta= \alpha -3$, and $I=[0,T_\ast]$ for some $T_\ast>0$ together with the initial and boundary conditions
\begin{eqnarray}\label{eq:BC1}
  \left. D\widetilde{W}(\nabla_h u_h)e_d\right|_{x_d=\pm \frac12} &=&0, \\\label{eq:BC2}
   u_h &\text{is}& 2L\text{-periodic w.r.t.}\ x_j, j=1,\ldots,d-1,\\\label{eq:BC3}
  \left. (u_h,\partial_t u_h)\right|_{t=0} &=& (u_{0,h}, u_{1,h}).
\end{eqnarray}
Here we assume that $W\colon B_r(I)\to \R$ is a smooth function for some $r>0$ which is frame invariant, i.e., $W(RF)=W(F)$ for every $F\in \R^{d\times d}$ and $R\in SO(d)$, and such that $DW(I)=0$ and $D^2W(I)\colon \R^{d\times d}\to\R^{d\times d}$ is positive definite on symmetric matrices. -- Note that the latter condition implies that $D^2W(I)$ is elliptic in the sense of Legendre-Hadamard:
\begin{equation}\label{eq:LegendreHadamard}
  (D^2W(I)a\otimes b) :a\otimes b\geq c_0 |a|^2|b|^2\qquad \text{for all}\ a,b\in \R^d
\end{equation}
for some $c_0>0$. In the following, we will denote $z=(t,x')$ with the convention that $z_0=t$ and $z_j=x_j$ for $j=1,\ldots, d-1$ and $\nabla_z=\nabla_{t,x'}= (\partial_t, \nabla_{x'})$.

Our main result is:
\begin{theorem}\label{thm:main1}
  Let $\theta\geq 0$, $0<T<\infty$, let $f_h\in W^3_1(0,T;L^2)\cap W^1_1(0,T;H^{2}_{per})$, $0<h\leq 1$, 
and 
let $u_{0,h}\in H^4_{per}(\Omega)^d$, $u_{1,h}\in H^{3}_{per}(\Omega)^d$
  such that 
  \begin{eqnarray*}
    D\widetilde{W}(\nabla_h u_{0,h})e_d|_{x_d=\pm\frac12} = D^2\widetilde{W}(\nabla_h u_{0,h})\nabla_h u_{1,h} e_d|_{x_d=\pm\frac12} &=&0,\\
\left(D^2W(\nabla_h u_{0,h})\nabla_h u_{2,h} + D^3W(\nabla_h
   u_{0,h})[\nabla_h u_{1,h}, \nabla_h u_{1,h}]\right)e_d|_{x_d=\pm \frac12} &=&0,
  \end{eqnarray*}
 and    
  \begin{eqnarray}\label{eq:AssumInitialData1}
   \left\|\frac1h \eps_h (u_{0,h})\right\|_{H^1}+\max_{k=0,1,2}\left\|\left(\frac1h \eps_h (u_{1+k,h}),u_{2+k,h}\right)\right\|_{H^{2-k,0}} &\leq& Mh^{1+\theta}, \\\label{eq:AssumFData}
 \max_{|\gamma|\leq 2}\left( \|\partial_z^\gamma f_h|_{t=0}\|_{L^2} + \| \partial_z^\gamma f_h\|_{W^1_1(0,T;L^2)}\right) &\leq & M,  
  \end{eqnarray}
uniformly in $0<h\leq 1$, 
  where 
  \begin{eqnarray}\label{eq:u2h}
    u_{2,h} &=&  h^{1+\theta}f_h|_{t=0} + \frac1{h^2} \Div_h D\widetilde{W}(\nabla_h
  u_{0,h}),\\\label{eq:u3h}
    u_{3,h} &=&  h^{1+\theta}\partial_tf_h|_{t=0} + \frac1{h^2} \Div_h \left(D^2\widetilde{W}(\nabla_h
  u_{0,h})\nabla_h u_{1,h}\right),
 \\\nonumber
     u_{4,h} &=&  h^{1+\theta}\partial_t^2f_h|_{t=0} + \frac1{h^2} \Div_h D^2W(\nabla_h
   u_{0,h})\nabla_h u_{2,h}\\\label{eq:u4h}
 && + \frac1{h^2} \Div_h D^3W(\nabla_h
   u_{0,h})[\nabla_h u_{1,h}, \nabla_h u_{1,h}].
  \end{eqnarray}
 If $\theta>0$, then there is some
  $h_0\in (0,1]$ and $C$ depending on $M$ and $T$ such that for every $0<h\leq h_0$ there is a unique solution $u_h\in C^4([0,T];L^2)\cap C^0([0,T];H^{4}_{per})$ of
  (\ref{eq:Evol1})-(\ref{eq:BC3}) satisfying 
 \begin{eqnarray}\label{eq:UnifBdd1} 
   \max_{|\gamma|\leq 2}\left\|\left(\partial_t^2\partial_z^\gamma u_h,
       \nabla_{x,t}\partial_{z}^\gamma\frac1h\eps_h(u_h)\right)\right\|_{C([0,T];L^2)}&\leq& Ch^{1+\theta}
  \end{eqnarray}
uniformly in $0<h\leq h_0$. If $\theta=0$, the same statement holds with $h_0=1$ provided that $M$ is sufficiently small.
\end{theorem}

As mentioned before, for any fixed $h>0$ existence of a solution $u_h$ in the function spaces above is essentially known if $[0,T]$ above is replaced by some $[0,T'(h)]$, $T'(h)>0$. This follows from the result and arguments in \cite{KochWaveEquation}. More precisely, we have:
\begin{theorem}\label{thm:ShortTimeExistence}
  Let the assumptions of Theorem~\ref{thm:main1} be valid. Then there is a neighborhood $U_h$ of $0$ in $H^4(\Omega)^d$ such that for any $0<h\leq 1$ there is some $0<T_{max}(h)\leq \infty$ such that
  (\ref{eq:Evol1})-(\ref{eq:BC3}) has a  unique solution $u_h\in C^4([0,T);L^2)\cap C^0([0,T);H^{4})$. If $T_{max}(h)<\infty$, then either $\{\nabla u_h(t):t\in [0,T_{max}(h))\}$ is not precompact in $U_h$ or 
  \begin{equation}\label{eq:BlowUp}
    \lim_{t\to T_{max}(h)} \int_0^{t} \|\nabla_{x,t}^2 u(s)\|_{L^\infty(\Omega)}\sd s =\infty. 
  \end{equation}
\end{theorem}
\begin{remark}\label{rem:Neighborhood}
Here the neighborhood $U_h$ can be chosen as 
$$
U_h=\left\{u\in H^4_{per}(\Omega)^d: \left\|\left(\tfrac1h \eps_h (u),\nabla_h u\right)\right\|_{L^\infty}\leq \eps h \right\},
$$ 
where $\eps$ is so small that $\widetilde{W}\in C^\infty(\overline{B_\eps(0)})$ and the coercivity estimate \eqref{eq:UniformCoercive'} below holds.  
\end{remark}
We refer to the appendix for comments on the proof.

Because of Theorem~\ref{thm:ShortTimeExistence}, it only remains to show the uniform estimate \eqref{eq:UnifBdd1} in Theorem~\ref{thm:main1}. To this end  suitable $h$-independent estimates for the linearized system will be an important ingredient. This is the purpose of the following section.

\subsection{Estimates for the Linearized Operator}\label{sec:Linear}

Recall that $z=(t,x')$ with the convention that $z_0=t$ and $z_j=x_j$ for $j=1,\ldots, d-1$ and $\nabla_z=\nabla_{t,x'}= (\partial_t, \nabla_{x'})$.

Let $u_h$ for some $0<h\leq 1$ be given  such that
 \begin{eqnarray}\label{eq:UniformEstim} 
   \max_{|\gamma|\leq 2}\left\|\left(\frac1h\eps_h(\partial_{z}^\gamma u_h),
       \nabla_{x,t} \frac1h\eps_h(\partial_{z}^\gamma u_h)\right)\right\|_{C([0,T];L^2)}&\leq& Rh
  \end{eqnarray}
where $R\in (0,R_0]$ for some $0<R_0\leq 1$ to be determined later. 
 For the following we denote
\begin{equation*}
      \|f\|_{V_h}= \|(f,\nabla_h f)\|_{H^{1,0}},\quad \|g\|_{1,h}= \|(g,\nabla_h g)\|_{L^2},
\end{equation*}
where $f\in V(\Omega)$, $g\in H^1(\Omega)$. Of course $\|f\|_{V}\leq \|f\|_{V_h}$ and $\|g\|_{H^1}\leq \|g\|_{1,h}$ for all $0<h\leq 1$.

We note that (\ref{eq:UniformEstim}) and Korn's inequality (\ref{eq:Korn})
 imply
\begin{align}\nonumber 
  \max_{|\gamma|\leq 1}&\left\|\left(\partial_z^\gamma  \nabla_h u_h, \partial_z^\gamma \frac1h \eps_h (u_h) \right)\right\|_{C([0,T];V)}+ \max_{|\gamma|\leq 2}\left\|\left(\partial_z^\gamma  \nabla_h u_h, \partial_z^\gamma \frac1h \eps_h (u_h) \right)\right\|_{C([0,T];H^1)}\\\label{eq:LInftyUnifEstim}
& + \max_{|\gamma|\leq 3}\left\|\left(\partial_z^\gamma  \nabla_h u_h, \partial_z^\gamma \frac1h \eps_h (u_h) \right)\right\|_{C([0,T];L^2)} \leq C_1 R h
\end{align}
for some $C_1\geq 1$ depending only on the constant in the Korn inequality.
 Because of $V(\Omega)\hookrightarrow L^\infty(\Omega)$, cf. Lemma~\ref{lem:Embeddings},  
(\ref{eq:LInftyUnifEstim}) implies in particular
\begin{eqnarray}
  \label{eq:CoeffLInftyEstim}
  \left\|\nabla_h u_h\right\|_{C([0,T];V_h)}+
  \left\|\left( \nabla_h u_h, \frac1h\eps_h(
      u_h)\right)\right\|_{C([0,T];L^\infty\cap V)}\leq MRh, 
\end{eqnarray}
where $M$ depends only on $\Omega$. Here we have used that $\|\nabla_h^2u\|_{L^2(\Omega)}\leq C\left\|\nabla \frac1h\eps_h(u) \right\|_{L^2(\Omega)}$ due to Korn's inequality.  
Recall that $\widetilde{W}(A)=W(I+A)$ for all $A\in \R^{d\times d}$.
In order to evaluate $D\widetilde{W}( \nabla_hu_h)$,
we will assume that $R_0>0$ is so small that $\widetilde{W}\in C^\infty(\ol{B_{MR_0}(0)})$ and $MR_0 \leq \eps$, where $\eps>0$ is as in Corollary~\ref{cor:D3W}.

Using (\ref{eq:CoeffLInftyEstim}) and (\ref{eq:D3WEstim}), we obtain
\begin{eqnarray}\nonumber
 \lefteqn{\left| \frac1{h^2}\int_0^1\left(D^3\widetilde{W}(\tau \nabla_h u_h(t))[\nabla_h u_h(t),\nabla_h
   v], \nabla_h w\right)_{L^2(\Omega)}\sd \tau\right|}\\\nonumber
&\leq& C'_0\frac1h
   \left\|\left(\nabla_h u_h, \frac1h \eps_h (u_h)\right)\right\|_{V(\Omega)}\left\|\frac1h
   \eps_h(v)\right\|_{L^2(\Omega)}\left\|\frac1h
   \eps_h(w)\right\|_{L^2(\Omega)}\\\label{eq:EstimD3W}
 &\leq & C_0R \left\|\frac1h
   \eps_h(v)\right\|_{L^2(\Omega)}\left\|\frac1h
   \eps_h(w)\right\|_{L^2(\Omega)}
\end{eqnarray}
uniformly in $v,w\in H^1_{per}(\Omega)^d$, $0\leq t\leq T$, $0<h\leq 1$.

In particular, we derive
 \begin{eqnarray*}
   \lefteqn{\frac1{h^2}(D^2\widetilde{W}(\nabla_hu_h(t))\nabla_h v,\nabla_h v)_{L^2(\Omega)}=\frac1{h^2}(D^2\widetilde{W}(0)\nabla_h v,\nabla_h v)_{L^2(\Omega)}}\\
   && +
   \frac1{h^2}\int_0^1\left( D^3\widetilde{W}(\tau  \nabla_h u_h(t))[\nabla_h u_h(t),\nabla_h
   v], \nabla_h v\right)_{L^2(\Omega)}\sd \tau\\
 &\geq & 
   \left(c_0 - C_0R_0\right)  \left\|\frac1h
   \eps_h(v)\right\|_{L^2(\Omega)}^2,
 \end{eqnarray*}
uniformly in $v\in H^1_{per}(\Omega)^d$, $t\in [0,T]$, $0<T<\infty$,
 $0<R\leq R_0$, $0<h\leq 1$, where $c_0>0$ depends only on $D^2\widetilde{W}(0)$ and $\Omega$.
Hence, if $R_0\in (0,1]$ is sufficiently small,  we have
\begin{equation}\label{eq:UniformCoercive'}
  \frac1{h^2}(D^2\widetilde{W}(\nabla_hu_h(t))\nabla_h v,\nabla_h v)_{L^2(\Omega)} \geq \frac{c_0}2 \left\|\frac1h \eps_h(v)\right\|_{L^2(\Omega)}^2
\end{equation}
for all $v\in H^1_{per}(\Omega)^d$, $t\in[0,T]$, $0<h\leq 1$, $0<R\leq R_0$, and $u_h$ satisfying (\ref{eq:LInftyUnifEstim}), where $c_0$ is as above and depends only on $D^2\widetilde{W}(0)$ and $\Omega$. -- We note that the same conclusion holds if $\left\|\left(\tfrac1h \eps_h(u_h(t)),\nabla_h u_h(t)\right)\right\|_{L^\infty(\Omega)}\leq \eps h$ for $\eps>0$ sufficiently small. In particular, if $R_0>0$ is chosen sufficiently small, (\ref{eq:LInftyUnifEstim}) implies the latter condition. Hence, if $U_h$ is as in Remark~\ref{rem:Neighborhood}, \eqref{eq:UniformCoercive'} holds for every $u_h(t)\in U_h$. 

By the same kind of
expansion for $D^2\widetilde{W}$ and estimates one shows 
\begin{equation}\label{eq:EstimD2W}
  \left|\frac1{h^2}(\partial_{z_j}D^2\widetilde{W}(\nabla_hu_h(t))\nabla_h v,\nabla_h
    w)_{L^2(\Omega)}\right|
\leq C'R\left\|\frac1h
   \eps_h(v)\right\|_{L^2}\left\|\frac1h
   \eps_h(w)\right\|_{L^2}
\end{equation}
for all $v,w\in H^1_{per}(\Omega)^d$, $j=0,\ldots, d-1$ uniformly in $0<h\leq 1$, $t\in[0,T]$, $0<R\leq R_0$, $0<T<\infty$. 

\begin{remark}
  We note that a similar coerciveness estimate plays an important role in \cite{MonneauJustification}, where the stationary setting is considered. But there a scaling, which scales $v'(x)$ and $v_d(x)$ differently, is used.
\end{remark}

To obtain higher regularity, we will use:
\begin{lemma}\label{lem:LinEstim2} 
  Let $k=0,1$.
  There are constants $C_0>0,R_0\in (0,1]$ independent of  $R\in (0,R_0]$  such that, if $w\in
  H^2(\Omega)^d$ with $\nabla_{x'} w\in H^2(\Omega)$ if $k=1$  solves
  \begin{eqnarray*}
    - \frac1{h^2}\Div_h (D^2\widetilde{W}(\nabla_h u_h(t))\nabla_h w) &=& f  \qquad
    \text{in}\ \mathcal{D}'(\Omega)
  \end{eqnarray*}
  for some $f\in H^{k,0}(\Omega)$, $t\in [0,T]$, and $0<h\leq 1$ and $\nabla_h u_h$ satisfies (\ref{eq:CoeffLInftyEstim}) for $0<R\leq R_0$,  then
   we have
   \begin{equation}    \left\| \left(
       \nabla \frac1h \eps_h (w),\nabla_h^2 w\right)\right\|_{H^{k,0}(\Omega)}\label{eq:LinEstim2} 
 \leq C_0 \left( \|h^2f\|_{H^{k,0}(\Omega)} + \left\|\frac1h\eps_h (w)\right\|_{H^{1+k,0}(\Omega)}\right).
   \end{equation} 
   If additionally
  \begin{equation}
    \label{eq:BC}
 e_d\cdot D^2\widetilde{W}(\nabla_h u_h(t))\nabla_h
  w|_{x_d=\pm\frac12}=0,
  \end{equation}
 then
  \begin{equation}    \max_{j=0,1}\left\|\left(
      \nabla_h^{1+j} w, \nabla^j \frac1h \eps_h(w) \right)\right\|_{H^{k,0}(\Omega)}\label{eq:LinEstim2b} 
\leq C_0 \|f\|_{H^{k,0}(\Omega)}.
  \end{equation} 
\end{lemma}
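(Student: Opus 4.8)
The plan is to treat the linearized operator as a perturbation of the constant-coefficient operator $-\frac1{h^2}\Div_h(D^2\widetilde W(0)\nabla_h\cdot)$, for which interior and (after imposing the natural boundary condition) full elliptic regularity is available, and to absorb the variable-coefficient and $P_n$-smoothed correction terms into the right-hand side using the smallness encoded in \eqref{eq:CoeffLInftyEstim} and the estimates of Corollary~\ref{cor:D3W}. First I would establish \eqref{eq:LinEstim2}, i.e. the estimate that does \emph{not} use the boundary condition. Here the point is that tangential and temporal derivatives are cheap: differentiating the equation in $x_j$, $j=1,\dots,d-1$, one controls $\frac1h\eps_h(\partial_{x_j}w)$ in $H^{1,0}$ by the already-assumed norm $\|\frac1h\eps_h(w)\|_{H^{2,0}}$ together with $\|h^2f\|_{H^{1,0}}$, using \eqref{eq:EstimD2W} to handle the commutator $[\partial_{x_j},A_n(\nabla_h u_h)]$ and \eqref{eq:EstimD3W} for the lower-order piece. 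What remains is the single pure normal second derivative $\partial_{x_d}^2 w$ (equivalently $\nabla_h^2 w$ modulo the controlled directions), and this I would recover algebraically: solving the PDE pointwise for the $dd$-component, the Legendre--Hadamard ellipticity \eqref{eq:LegendreHadamard} of $D^2W(I)$ (which makes the coefficient of $\partial_{x_d}^2 w_d$ in the normal direction invertible, with the perturbation from $A_n(\nabla_h u_h)-D^2\widetilde W(0)$ small in $L^\infty$ by \eqref{eq:CoeffLInftyEstim} provided $R_0$ is small) lets one express $\frac1{h^2}\partial_{x_d}^2 w$ in terms of $h^2 f$, tangential/mixed derivatives of $w$, and lower-order terms, all already estimated. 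One then reads off $\nabla_h^2 w$ and $\nabla\frac1h\eps_h(w)$ in $H^{1,0}$.

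For the second estimate \eqref{eq:LinEstim2b}, the Neumann-type boundary condition \eqref{eq:BC} is what upgrades the ``$\|\frac1h\eps_h(w)\|_{H^{2,0}}$ on the right'' of \eqref{eq:LinEstim2} to the genuine a priori bound $\|f\|_{H^{1,0}}$ on the right. The mechanism is the coercivity \eqref{eq:UniformCoercive}: testing the equation against $w$ (and, after differentiating in $z_j$, against $\partial_{z_j}w$) and integrating by parts, the boundary terms vanish exactly because of \eqref{eq:BC}, so one obtains $\|\frac1h\eps_h(w)\|_{H^{1,0}}\le C\|f\|_{H^{-1,0}}\le C\|f\|_{H^{1,0}}$ directly, and then one more round of tangential differentiation plus Korn's inequality (Lemma~\ref{lem:Korn}) gives control of $\|\frac1h\eps_h(w)\|_{H^{2,0}}$ and of $\nabla_h w$ in $H^{1,0}$. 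Feeding this back into \eqref{eq:LinEstim2} closes the estimate for $j=1$. One must be a little careful that $w\in H^{1,2}\cap H^{3,0}$ is exactly the regularity needed to justify the integrations by parts and the pointwise solving for $\partial_{x_d}^2 w$; all the norms appearing are finite, so the computation is legitimate rather than merely formal.

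The main obstacle I expect is the normal-direction regularity: near $x_d=\pm\frac12$ one cannot simply differentiate the equation in $x_d$ without generating boundary terms, so the recovery of $\partial_{x_d}^2 w$ must go through the pointwise algebraic inversion of the $(d,d)$-block of the coefficient tensor, and one has to check that the relevant $d\times d$ ``acoustic tensor'' $\xi\mapsto A_n(\nabla_h u_h)e_d\otimes e_d$ acting on vectors is uniformly invertible. This is where \eqref{eq:LegendreHadamard} is used, in the form that $D^2W(I)(e_d\otimes\,\cdot\,,e_d\otimes\,\cdot\,)$ is a positive definite $d\times d$ matrix, and where the smallness of $R_0$ (hence of $\|P_n\nabla_h u_h\|_{L^\infty}$) guarantees the perturbed tensor stays invertible with a uniform bound independent of $h$ and $n$. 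The other technical point to keep honest is tracking the factors of $h$: the estimates of Corollary~\ref{cor:D3W} carry an explicit gain of $h$, and \eqref{eq:LInftyUnifEstim} provides smallness of order $Rh$, which is what makes the perturbation argument uniform in $0<h\le1$; one simply has to verify that no step reintroduces a negative power of $h$ beyond those already displayed in \eqref{eq:LinEstim2}.
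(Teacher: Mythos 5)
Your plan follows the same strategy the paper actually uses: for \eqref{eq:LinEstim2} the normal second derivative $\partial_{x_d}^2 w$ is recovered algebraically from the equation using the positive definiteness of the $d\times d$ block $D^2\widetilde W(0)(e_d\otimes\cdot\,,e_d\otimes\cdot)$ guaranteed by Legendre--Hadamard, and the variable-coefficient perturbation is moved to the right and absorbed for $R_0$ small; for \eqref{eq:LinEstim2b} one tests the weak form with $w_0$ and $\partial_{x_j}^2 w_0$ (mean subtracted), uses the coercivity \eqref{eq:UniformCoercive} plus the commutator estimate \eqref{eq:D1EstimAn}, and then feeds the resulting $\|\frac1h\eps_h(w)\|_{H^{2,0}}$ bound back into \eqref{eq:LinEstim2}. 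Two small remarks: the paper only inverts the \emph{constant} tensor $D^2\widetilde W(0)$, leaving the entire variable part $A_n(\nabla_hu_h)-D^2\widetilde W(0)$ as a perturbation on the right-hand side estimated via Corollary~\ref{cor:Algebra} (slightly cleaner than inverting the perturbed acoustic tensor), and for \eqref{eq:LinEstim2} no differentiation of the equation is actually needed since the tangential contributions $\nabla_{x'}\frac1h\eps_h(w)$ are already present in the assumed norm $\|\frac1h\eps_h(w)\|_{H^{2,0}}$ on the right.
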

\begin{proof} 
Let $0<R_0 \leq 1$ be at least as small as above.
  First of all, 
  \begin{eqnarray*}
    \lefteqn{\Div_h (D^2\widetilde{W}(0)\nabla_h w)
= \frac1h\partial_{x_d} (D^2\widetilde{W}(0)\nabla_h w)_d + \Div_{x'}(D^2\widetilde{W}(0)\nabla_h w)'}\\
&=& \frac1{h^2}(D^2\widetilde{W}(0)\partial_{x_d}^2 w\otimes e_d )_d + \frac1h (D^2\widetilde{W}(0)\partial_{x_d}(\nabla_{x'},0) w)_d + \Div_{x'}(D^2\widetilde{W}(0)\nabla_h w)' 
  \end{eqnarray*}
  where $A'=(a_{ij})_{i=1,\ldots d,j=1,\ldots d-1}$ for $A\in \R^{d\times d}$. We note that the second and third term consists of terms of
  $\nabla_{x'}\nabla_h w$. Moreover, 
  \begin{equation*}
   (D^2\widetilde{W}(0)\partial_{x_d}^2 w\otimes e_d )_d =M\partial_{x_d}^2 w  
  \end{equation*}
for some symmetric positive definite matrix $M$, which follows from the Legendre-Hadamard condition (\ref{eq:LegendreHadamard}).
  Hence
  \begin{equation*}
    \frac1{h^2} \partial_{x_d}^2 w =M^{-1}\left(\Div_h\left(Q\nabla_h w\right)  -\frac1h (Q\partial_{x_d}(\nabla_{x'},0) w)_d - \Div_{x'}(Q\nabla_h w)'\right)
  \end{equation*}
for $Q= D^2\widetilde{W}(0)$
and therefore
  \begin{eqnarray*}
     \lefteqn{\left\|
      \frac1{h^2} \partial_{x_d}^2 w\right\|_{H^{k,0}(\Omega)}}\\ 
    &\leq& C_0 \left(\left\|\Div_h \left(D^2\widetilde{W}(0)\nabla_h w\right) \right\|_{H^{k,0}(\Omega)}
    + \left\|\nabla_{x'}\nabla_h w\right\|_{H^{k,0}(\Omega)}\right).
  \end{eqnarray*}
  Thus Korn's inequality and $\|\partial_{x_d}\frac1h\eps_h(w)\|_{H^{k,0}(\Omega)}\leq \|\nabla_h^2 w\|_{H^{k,0}(\Omega)}$ yield
  \begin{eqnarray}\nonumber
      \lefteqn{\left\|\left(\nabla \frac1h \eps_h (w),
      \nabla_h^2  w\right)\right\|_{H^{k,0}(\Omega)}}\\\label{eq:nablah2}
    &\leq& C_0 \left(\left\|\Div_h \left(D^2\widetilde{W}(0)\nabla_h w\right) \right\|_{H^{k,0}(\Omega)}
    + \left\|\nabla_{x'}\frac1h\eps_h(w)\right\|_{H^{k,0}(\Omega)}\right).
  \end{eqnarray}
  Next we use that
  \begin{eqnarray*}
   \lefteqn{\Div_h \left(D^2\widetilde{W}(\nabla_h u_h)\nabla_h w\right)}\\
 &=&    \Div_h
    \left(D^2\widetilde{W}(0)\nabla_h w\right) +    \int_0^1\Div_h 
    \left(D^3\widetilde{W}(\tau \nabla_h u_h)[\nabla_hu_h,\nabla_h w]\right)\sd \tau \\
 &\equiv &    \Div_h
    \left(D^2\widetilde{W}(0)\nabla_h w\right) +    \Div_h 
    \left(G(\nabla_h u_h)[\nabla_hu_h,\nabla_h w]\right), 
  \end{eqnarray*}
  where $G\in C^\infty(\ol{B_{\eps}(0)};\mathcal{L}^3(\R^{d\times d}))$ for some suitable $\eps>0$. Hence, if $k=0$, Corollary~\ref{cor:Algebra} implies
  \begin{eqnarray*}
    \lefteqn{\|G(\nabla_h u_h)[\nabla_hu_h,\nabla_h w]\|_{1,h}}\\
      &\leq& C\|G(\nabla_h u_h)\|_{V_h}\|\nabla_hu_h\|_{V_h}\|\nabla_hw\|_{1,h}
      \leq CR_0\left\|\left(\nabla_hw, \nabla_h^2 w\right)\right\|_{L^2}.
    \end{eqnarray*}
    where 
$\|f\|_{V_h}= \|(f,\nabla_h f)\|_{H^{1,0}}$
    and we have used \eqref{eq:CoeffLInftyEstim}. Similarly, if $k=1$,
Corollary~\ref{cor:Algebra} yields
  \begin{eqnarray*}
    \lefteqn{\|G(P_n^h\nabla_h u_h)[\nabla_hu_h,\nabla_h w]\|_{V_h}}\\
      &\leq& C\|G(\nabla_h u_h)\|_{V_h}\|\nabla_hu_h\|_{V_h}\|\nabla_hw\|_{V_h}
      \leq CR_0\left\|\left(\nabla_hw, \nabla_h^2 w\right)\right\|_{H^{1,0}}.
    \end{eqnarray*}
  Therefore
  \begin{eqnarray}\nonumber
    \lefteqn{\left\|\Div_h \left(D^2\widetilde{W}(0)\nabla_h w\right)
      \right\|_{H^{k,0}(\Omega)}}\\\nonumber
   &\leq & \left\|\Div_h \left(D^2\widetilde{W}(\nabla u_h)\nabla_h
       w\right)\right\|_{H^{k,0}(\Omega)}\\\nonumber 
&& +\left\|\nabla_h (G(\nabla_h u_h)[\nabla_hu_h,\nabla_h w]\right\|_{H^{k,0}(\Omega)}\\ \label{eq:deltaEstim}
   &\leq & \left\|h^2f\right\|_{H^{k,0}(\Omega)} +  CR_0\left\|\left(\frac1{h}\eps_h(w), \nabla_h^2w\right)\right\|_{H^{k,0}(\Omega)}
  \end{eqnarray}
  for $k=0,1$.
Combining the last estimate with (\ref{eq:nablah2}) for sufficiently small $R_0 \in (0,1]$,  we obtain (\ref{eq:LinEstim2}). 

Now, if additionally (\ref{eq:BC}) holds, then
\begin{equation*}
  \frac1{h^2}(D^2\widetilde{W}(\nabla_h u_h)\nabla_h  w, \nabla_h \varphi)_{L^2} = (f,\varphi)_{L^2}
\end{equation*}
for all $\varphi \in H^1_{per}(\Omega)^d$. Hence, choosing $\varphi = \partial_{x'}^{2\gamma}\partial_{x'}^{2\beta} w_0$ with $w_0= w- \frac1{|\Omega|} \int_{\Omega} w\sd x$ and $|\beta|\leq k$, $|\gamma|\leq 1$ and using integration by parts, we obtain by (\ref{eq:UniformCoercive'}), (\ref{eq:EstimD2W}), (\ref{eq:LinEstim2}), and \eqref{eq:D1EstimAn} below
\begin{eqnarray*}
  \lefteqn{\sup_{|\beta|\leq k, |\gamma|\leq 1}\left\|\partial_{x'}^\gamma \partial_{x'}^\beta \frac1h \eps_h(w) \right\|_{L^2(\Omega)}^2}\\
 &\leq& C_0 \|f\|_{H^{k,0}(\Omega)}\max_{|\gamma|\leq 1}\left\|\partial_{x'}^{2\gamma} w_0\right\|_{H^{k,0}(\Omega)}
+ CR \left\|\frac1h \eps_h(w)\right\|_{H^k(\Omega)}\max_{|\gamma|\leq 1}\left\|\partial_{x'}^{\gamma} w_0\right\|_{H^{k,0}(\Omega)}\\
&\leq& C' \left(\|f\|_{H^{k,0}(\Omega)}+R\left\|\frac1h \eps_h (w)\right\|_{H^{1,0}(\Omega)}\right)\max_{|\gamma|\leq 1}\left\|\partial_{x'}^{\gamma} \frac1h \eps_h (w)\right\|_{H^{k,0}(\Omega)}.
\end{eqnarray*}
Thus, choosing $R_0$ sufficiently small, we obtain
\begin{equation*}
  \left\| \frac1h \eps_h(w) \right\|_{H^{1+k,0}(\Omega)} \leq C_0 \|f\|_{H^{k,0}(\Omega)}
\end{equation*}
with $C_0>0$ depending only on $\Omega$. This finishes the proof.
 \end{proof}

 \begin{lemma}\label{lem:DifEstimAn}
Let $\nabla u_h(t)$ satisfy (\ref{eq:LInftyUnifEstim}) for some $0<h\leq 1$, $t\in [0,T]$, and $0<R\leq R_0$, where $R_0\in (0,1]$ is so small that all previous conditions are satisfied.  
Then
   \begin{eqnarray}\label{eq:D1EstimAn}
     \left|\frac1{h^2}\left((\partial_z^\beta D^2\widetilde{W}(\nabla_hu_h(t)))\nabla_h
         w,\nabla_h v\right)_\Omega\right|
 \leq
     CR\left\|\frac1h\eps_h(w)\right\|_{H^{|\beta|-1}(\Omega)}\left\|\frac1h\eps_h(v)\right\|_{L^2(\Omega)}
   \end{eqnarray}
   if $1\leq |\beta|\leq 2$ and 
   \begin{eqnarray}\label{eq:3DiffEstim}
     \left|\frac1{h^2}\left((\partial_z^\beta D^2\widetilde{W}(\nabla_hu_h(t)))\nabla_h
         w,\nabla_h v\right)_\Omega\right|
 \leq
     CR\left\| \frac1h\eps_h(w)\right\|_{V(\Omega)} \left\|\frac1h\eps_h(v)\right\|_{L^2(\Omega)}
   \end{eqnarray}
if $|\beta|=3$.
The constants $C$ are independent of $\nabla_h u_h(t), w,v, h,n, R$.
 \end{lemma}
 \begin{proof}
If $|\beta|=1$, then (\ref{eq:D1EstimAn}) is just (\ref{eq:EstimD2W}).
Next let $|\beta|=2$.
  Then for $j,k=0,\ldots, d-1$
  \begin{eqnarray*}
    \partial_{z_j}\partial_{z_k} D^2\widetilde{W}(\nabla_h u_h) &=& D^3\widetilde{W}(\nabla_h
    u_h)[\partial_{z_j}\partial_{z_k}\nabla_h u_h]\\
&& + D^4 \widetilde{W}(\nabla_h
    u_h)[\partial_{z_j}\nabla_h u_h,\partial_{z_k}\nabla_h u_h],
  \end{eqnarray*}
  where 
  \begin{eqnarray*}
   \left\|\partial_{z_j}\partial_{z_k} \frac1h \eps_h (u_h)\right\|_{H^{1,0}(\Omega)}
&\leq& 
    C_0 \left\| \nabla_z \frac1h \eps_h (u_h)\right\|_{V(\Omega)} \leq C_0Rh 
\end{eqnarray*}
due to (\ref{eq:LInftyUnifEstim}). Together with (\ref{eq:D3WEstimb}) the latter estimate implies (\ref{eq:D1EstimAn}) in the case $|\beta|=2$. 

  Finally, if $|\beta|=3$, we use that
  \begin{eqnarray*}
    \lefteqn{\partial_{z_j}\partial_{z_k}\partial_{z_l} D^2\widetilde{W}(\nabla_h u_h) = D^3\widetilde{W}(\nabla_h
    u_h)[\partial_{z_j}\partial_{z_k}\partial_{z_l}\nabla_h u_h]}\\
&& + D^4 \widetilde{W}(\nabla_h
    u_h)[\partial_{z_j}\partial_{z_l}\nabla_h u_h,\partial_{z_k}\nabla_h u_h]\\
  &&  +  D^4 \widetilde{W}(\nabla_h
    u_h)[\partial_{z_j}\nabla_h u_h,\partial_{z_k}\partial_{z_l}\nabla_h u_h]\\
&& +  D^4 \widetilde{W}(\nabla_h
    u_h)[\partial_{z_l}\nabla_h u_h,\partial_{z_j}\partial_{z_k}\nabla_h u_h]\\
&&
   + D^5W (\nabla_hu_h) [\partial_{z_j}\nabla_h u_h,\partial_{z_k}\nabla_h u_h,\partial_{z_l}\nabla_h u_h]
  \end{eqnarray*}
  Since $\nabla_z \nabla_h u_h\in L^\infty(\Omega)$  and $\nabla_z^2 \nabla_h u_h\in H^{1,0}(\Omega) \hookrightarrow L^{4,2}(\Omega)$ are of order $CRh$ due to (\ref{eq:LInftyUnifEstim}), the estimates of all parts in
  \begin{equation*}
    \frac1{h^2} \left((\partial_z^\beta D^2\widetilde{W}(\nabla_h u_h))\nabla_h w, \nabla_h v\right)_\Omega
  \end{equation*}
which come from terms involving $D^4\widetilde{W}$ or $D^5\widetilde{W}$ can be done in a straight forward manner by
\begin{equation*}
  CR \left\| \frac1h \eps_h (w)\right\|_{L^{4,\infty}}\left\| \frac1h \eps_h (v)\right\|_{L^2}\leq   C'R \left\| \frac1h \eps_h (w)\right\|_{H^{1}}\left\| \frac1h \eps_h (v)\right\|_{L^2}
\end{equation*}
uniformly in $0<h\leq 1$ and $n\in \N_0\cup\{\infty\}$.
 It only remains to estimate the part involving the $D^3\widetilde{W}$-term:
To this end we use that (\ref{eq:LInftyUnifEstim}) and (\ref{eq:D3WEstim}) imply
\begin{eqnarray*}
     \lefteqn{\left|\frac1{h^2}\left(( D^3\widetilde{W}(\nabla_hu_h))[\partial_z^\beta \nabla_hu_h,\nabla_h
         w],\nabla_h v\right)_\Omega\right|} \\
   &\leq& \frac{C_0}h \left\|\partial_z^\beta\frac1h\eps_h(u_h)  \right\|_{L^2(\Omega)} \left\| \left( \frac1h \eps_h(w), \nabla_h w \right) \right\|_{V(\Omega)}\left\| \frac1h \eps_h(v)  \right\|_{L^2(\Omega)}\\
   &\leq& CR \left\| \frac1h \eps_h(w) \right\|_{V(\Omega)}\left\| \frac1h \eps_h(v)  \right\|_{L^2(\Omega)}
\end{eqnarray*}
Altogether we obtain (\ref{eq:3DiffEstim}). 
 \end{proof}

Next we consider the linearized system to (\ref{eq:Evol1})-(\ref{eq:BC3}):
\begin{alignat}{1}\label{eq:stEq}
  \partial_t^2 w - \frac1{h^2} \Div_h (D^2\widetilde{W} (\nabla_h
  u_h)\nabla_h w) &= f\\\label{eq:Neumann1}
   D^2\widetilde{W} (\nabla_h
  u_h)\nabla_h w\, e_d|_{x_d=\pm\frac12} &=0 \\\label{eq:Periodic1}
w &\ \ \text{is}\ 2L\text{-periodic w.r.t.}\ x_j, j=1,d-1,\\\label{eq:Initial}
  \left. (w,\partial_t w)\right|_{t=0} &= (w_0, w_1).
\end{alignat}
The following lemma contains the essential estimate for this system. 
\begin{lemma}\label{lem:LinEstim3} 
  Let $0<T<\infty$, $0<h\leq 1$, $0<R\leq R_0$ be given, and let $R_0$ be as in Lemma~\ref{lem:LinEstim2}. 
  Moreover, assume that $u_h$ satisfies (\ref{eq:LInftyUnifEstim}).
  \begin{enumerate}
  \item For every $f\in W^1_1(0,T;L^2)^d$, $w_0\in H^2_{per}(\Omega)^d$, and $w_1 \in H^1_{per}(\Omega)^d$,
  there is a unique $w\in C^0([0,T];H^2_{per}(\Omega))^d\cap C^2([0,T];L^2(\Omega))^d$  that solves \eqref{eq:stEq}-\eqref{eq:Periodic1}.
Moreover, there are constants $C_L, C'\geq 1$ depending only on $\Omega$ and $W$  such that
  \begin{eqnarray}\label{eq:LinEstim'}
    \lefteqn{\left\|\left(\partial_t^2 
          w,\frac1h \eps_h(w), \nabla_{x,t}\frac1h \eps_h(w)\right)\right\|_{C([0,T];L^2)}}\\\nonumber
&\leq& C_Le^{C'RT} \left( 
      \| f\|_{W^1_1(0,T;L^2)}
 + \left\|\left(\frac1h \eps_h(w_1), w_2, f|_{t=0}\right)\right\|_{L^2}  \right)
  \end{eqnarray}
  where
  \begin{eqnarray}\label{eq:LinEstimw2}
    w_{2}&=& \frac1{h^2}\Div_h (D^2\widetilde{W}(\nabla_h
u_h|_{t=0})\nabla_h w_0)+   f|_{t=0}.
  \end{eqnarray}
 \item For every $f\in W^2_1(0,T;L^2)^d\cap W^1_1(0,T;H^1_{per})^d$, $w_0\in H^3_{per}(\Omega)^d$, and $w_1 \in H^2_{per}(\Omega)^d$,
  there is a unique $w\in C^0([0,T];H^3_{per}(\Omega))^d\cap C^3([0,T];L^2(\Omega))^d$  that solves \eqref{eq:stEq}-\eqref{eq:Periodic1}.
Moreover, there are  constants $C_L, C'\geq 1$ depending only on $\Omega$ and $W$  such that
  \begin{eqnarray}\nonumber
    \lefteqn{\max_{|\gamma|\leq 1}\left\|\left(\partial_t^2\partial_z^\gamma 
          w,\frac1h \eps_h(\partial_z^\gamma w), \nabla_{x,t}\frac1h \eps_h(\partial_z^\gamma w)\right)\right\|_{C([0,T];L^2)}}\\\nonumber
&\leq& C_Le^{C'RT} \left( 
      \max_{|\gamma|\leq 1}\| \partial_z^\gamma f\|_{W^1_1(0,T;L^2)}
 + \left\|\left(\frac1h \eps_h(w_1), w_2, f|_{t=0}\right)\right\|_{H^{1,0}}\right.\\\label{eq:LinEstim}
&& \left.+ \left\|\left(\frac1h \eps_h(w_2), w_3, \partial_tf|_{t=0}\right)\right\|_{L^2}  \right)
  \end{eqnarray}
  where $w_2$ is as above and 
  \begin{eqnarray*}
    w_{3}&=& \frac1{h^2}\Div_h (D^2\widetilde{W}(\nabla_h
u_h|_{t=0})\nabla_h w_1)+   \partial_tf|_{t=0}.
  \end{eqnarray*}
  \end{enumerate}
\end{lemma}
\begin{proof} 
In both parts existence of a solution (for fixed $h$) can be obtained  by the energy method as e.g. in \cite{KochWaveEquation}. 

Hence the main task is to  establish the uniform estimates (\ref{eq:LinEstim'}) and (\ref{eq:LinEstim}).
 First of all, we note that (\ref{eq:stEq})-(\ref{eq:Periodic1}) imply 
 \begin{equation*}
   a(t):=\int_\Omega w(t)\sd x= \int_\Omega w_0 \sd x + t \int_\Omega w_1 \sd x + \int_0^t (t-\tau)\int_\Omega  f(\tau,x) \sd x \sd \tau.
 \end{equation*}
 Hence, replacing $w(t)$ by $w(t)-a(t)$ and subtracting from $(w_0,w_1,f)$ their mean values with respect to $\Omega$, we can reduce to the case
 \begin{equation*}
    \int_\Omega w_0 \sd x= \int_\Omega w_1 \sd x = \int_\Omega f(t) \sd x = \int_\Omega w(t)\sd x= 0
 \end{equation*}
 for all $0\leq t\leq T$.

Now we first prove \eqref{eq:LinEstim'}. To this end we
differentiate (\ref{eq:stEq}) with respect to $t$ and multiply with $\partial_t^2 w$ in
  $L^2(\Omega)$. 
Then we obtain
  \begin{eqnarray*}
    \lefteqn{\frac12\frac{d}{dt}\left( \left\|\partial_t^2 w\right\|_{L^2}^2+\frac1{h^2}\left(D^2\widetilde{W}(\nabla_hu_h)\nabla_h\partial_t w,
        \nabla_h \partial_t w\right)_{L^2}  \right)}\\
&\leq &  \left|\left(\partial_t f,\partial_t^2 w\right)_{L^2}\right|+ \frac32 \left|\frac1{h^2}\left( (\partial_t D^2\widetilde{W}(\nabla_h
  u_h))\nabla_h \partial_t w,\nabla_h \partial_t w\right)_{L^2}\right|\\
&&
+  \left|\frac1{h^2}\left( (\partial_t^2 D^2\widetilde{W}(\nabla_h
  u_h))\nabla_h w,\nabla_h \partial_t w\right)_{L^2}\right|
 -  \frac1{h^2}\frac{d}{dt}\left((\partial_t D^2\widetilde{W}(\nabla_hu_h))\nabla_h w,
        \nabla_h \partial_t w\right)_{L^2}
  \end{eqnarray*}
in the sense of distributions,
  where we have used 
  \begin{eqnarray}\nonumber
    \lefteqn{\frac{d}{dt}\left( \frac1{2h^2}\left(D^2\widetilde{W}(\nabla_hu_h)\nabla_h\partial_t w,
        \nabla_h \partial_t w\right)_{L^2} + \frac1{h^2}\left((\partial_t D^2\widetilde{W}(\nabla_hu_h))\nabla_h w,
        \nabla_h \partial_t w\right)_{L^2} \right) }\\\nonumber
    &=& -\frac12\frac{d}{dt} \|\partial_t^2 w\|_{L^2}^2 + \left(\partial_t f, \partial_t^2 w\right)_{L^2} + \frac3{2h^2} \left((\partial_t D^2\widetilde{W}(\nabla_hu_h))\nabla_h\partial_t w,
        \nabla_h \partial_t w\right)_{L^2}\\\label{eq:EnergyDiff}
&&+  \frac1{h^2}\left((\partial_t^2 D^2\widetilde{W}(\nabla_hu_h))\nabla_h w,
        \nabla_h \partial_t w\right)_{L^2}  
  \end{eqnarray}
  and (\ref{eq:Neumann1})-(\ref{eq:Periodic1}). Due to \eqref{eq:D1EstimAn} we have
  \begin{eqnarray*} 
   \frac1{h^2} \left|\left((\partial_t D^2\widetilde{W}(\nabla_hu_h))\nabla_h\partial_t w,
        \nabla_h \partial_t w\right)_{L^2}\right|
    &\leq& CR\left\|\frac1{h}\eps_h(\partial_t w)\right\|_{L^2}^2,\\
\frac1{h^2}\left|\left((\partial_t^2 D^2\widetilde{W}(\nabla_hu_h))\nabla_h w,
        \nabla_h \partial_t w\right)_{L^2}\right|
    &\leq& CR\left\|\frac1{h}\eps_h(w)\right\|_{H^1(\Omega)}\left\|\frac1{h}\eps_h(\partial_t w)\right\|_{L^2}
  \end{eqnarray*} 
for every $t\in [0,T]$.
  Moreover, because of \eqref{eq:D1EstimAn} again,
  \begin{eqnarray*}
    \lefteqn{\sup_{0\leq \tau \leq t}\left|\frac1{h^2}\left((\partial_t D^2\widetilde{W}(\nabla_hu_h(\tau)))\nabla_h w(\tau),
        \nabla_h \partial_t w(\tau)\right)_{L^2}\right|}\\
  &\leq & C R\left\| \frac1h \eps_h (w)\right\|_{L^\infty(0,t;L^2)}
\left\| \frac1h \eps_h (\partial_t w)\right\|_{L^\infty(0,t;L^2)}
  \end{eqnarray*}
  Therefore the previous estimates, (\ref{eq:UniformCoercive'}), and Young's inequality imply
  \begin{eqnarray}\nonumber
    \lefteqn{\sup_{0\leq \tau \leq t}\left\|\left(\partial_t^2 w(\tau),\frac1h \eps_h(\partial_t w(\tau))
        \right)\right\|_{L^2}^2  }\\
\nonumber
    &\leq & CR \left\|\left( \partial_t^2 w, \frac1h \eps_h(w),\nabla_{x,t}\frac1h\eps_h( w)\right)\right\|_{L^2(0,t;L^2)}^2+
  C_0\|\partial_t f\|_{L^1(0,T;L^2)}^2   \\\nonumber 
&& + C_0\left\|\left(\frac1h \eps_h(w_1),
        w_2\right)\right\|_{L^2}^2 + C R\left\| \frac1h \eps_h (w)\right\|_{C([0,t];L^2)}^2.
  \end{eqnarray}
  Now
  \begin{eqnarray*}
    \left\| \frac1h \eps_h (w)\right\|_{C([0,t];L^2)}^2 &\leq& C_0\left(\max_{j=0,1}\left\| \frac1h \eps_h (\partial_t^j w)\right\|_{L^2(0,t;L^2)}^2 + \left\|\frac1h \eps_h (w_0)\right\|_{L^2}^2 \right), 
    \end{eqnarray*}
due to 
\begin{equation}\label{eq:UnivLinftyEb}
\|f\|_{C([0,t];L^2)}\leq C_0 \left(\|f\|_{W^1_2(0,t;L^2)} + \|f|_{t=0}\|_{L^2}\right)  
\end{equation}
with some $C_0>0$ independent of $t>0$, cf. (\ref{eq:StrongBUCConv}), and
    \begin{eqnarray*}
   \lefteqn{\left\|\left(\frac1h \eps_h (w), \nabla \frac1h \eps_h (w)\right)\right\|_{L^\infty(0,t;L^2)}}\\
    &\leq& C_0 \left( \|f\|_{C([0,t];L^2)} + \|\partial_t^2 w\|_{C([0,t];L^2)}\right) \\
 &\leq& C_0 \left( \|f\|_{W^1_1(0,t;L^2)}+\|f|_{t=0}\|_{L^2} + \|\partial_t^2 w\|_{C([0,t];L^2)}\right) 
  \end{eqnarray*}
due to (\ref{eq:LinEstim2b}) 
and (\ref{eq:UnivLinftyEb})
  uniformly in $0\leq t\leq T$.
Hence we conclude
  \begin{eqnarray}\nonumber
    \lefteqn{\sup_{0\leq \tau \leq t}\left\|\left(\partial_t^2 w(\tau), \frac1h \eps_h(w),\nabla_{x,t}\frac1h \eps_h( w(\tau))
        \right)\right\|_{L^2}^2  }\\
\nonumber
    &\leq & CR \left\|\left( \partial_t^2 w,\frac1h \eps_h(w),\nabla_{x,t}\frac1h\eps_h(
        w)\right)\right\|_{L^2(0,t;L^2)}^2   \\\nonumber 
&& +
  C_0\| f\|_{W^1_1(0,T;L^2)}^2+ C_0\left\|\left(\frac1h \eps_h(w_1),
        w_2, f|_{t=0}\right)\right\|_{L^2}^2,
  \end{eqnarray}
  where we have used $R\leq 1$ and (\ref{eq:LinEstimw2}).
  Therefore the Lemma of Gronwall yields
  \begin{eqnarray*}
    \lefteqn{\left\|\left(\partial_t^2 w,\frac1h \eps_h(w), \nabla_{x,t}\frac1h \eps_h(w)
        \right)\right\|_{C([0,T];L^2)}^2}\\
    &\leq& C_Le^{C'RT}\left(\left\|\left(\frac1h \eps_h(w_1),
        w_2, f|_{t=0}\right)\right\|_{L^2}^2 + \|f\|_{W^1_1(0,T;L^2)}^2 \right).
  \end{eqnarray*}
  This shows (\ref{eq:LinEstim'}).

To prove \eqref{eq:LinEstim}, we differentiate (\ref{eq:stEq}) with respect to $z_j$, $j=0,\ldots, d-1$ and obtain that $\tilde{w}_j:= \partial_{z_j}w$ solves
\begin{eqnarray*}
    \partial_t^2 \tilde{w}_j - \frac1{h^2} \Div_h (D^2 \widetilde{W} (\nabla_h
  u_h)\nabla_h \tilde{w}_j) &=& f_j + \frac1{h^2} \Div_h ((\partial_{z_j}D^2 \widetilde{W} (\nabla_h
  u_h))\nabla_h w)
\end{eqnarray*}
together with 
\begin{equation*}
     D^2\widetilde{W} (\nabla_h
  u_h)\nabla_h \widetilde{w}_j\, e_d|_{x_d=\pm\frac12} =-    \left( \partial_{z_j}D^2\widetilde{W} (\nabla_h
  u_h)\right)\nabla_h w\, e_d|_{x_d=\pm\frac12}
\end{equation*}
and \eqref{eq:Periodic1}, where $f_j= \partial_{z_j} f$.
Hence differentiating again by $t$, multiplying this equation with $\partial_t^2 \tilde{w}_j$, and
applying the estimates above with $w$ replaced by $\tilde{w}_j$, we derive
  \begin{eqnarray*}
    \max_{|\gamma|\leq 1}\lefteqn{\left\|\left(\partial_t^2 \partial_z^\gamma w,\frac1h \eps_h(\partial_z^\gamma w), \nabla_{x,t}\frac1h \eps_h(\partial_z^\gamma w)
        \right)\right\|_{C([0,T];L^2)}^2}\\
    &\leq& C_Le^{C'RT}\max_{|\gamma|\leq 1}\left(\left\|\left(\frac1h \eps_h(\partial_z^\gamma w_1),
        \partial_z^\gamma w_2, \partial_z^\gamma f|_{t=0}\right)\right\|_{L^2}^2 + \|\partial_z^\gamma f\|_{W^1_1(0,T;L^2)}^2 \right)\\
&&+ \max_{j=0,\ldots, d-1}\left|\frac1{h^2}\left(\left(\partial_t^2 (\partial_{z_j} D^2\widetilde{W}(\nabla_h u_h))\nabla_h w\right),\partial_t \nabla_h \tilde{w}_j \right)_{Q_T}  \right|\\
&&+ \max_{j=0,\ldots, d-1}\left|\left.\frac1{h^2}\left(\left(\partial_t (\partial_{z_j} D^2\widetilde{W}(\nabla_h u_h(t)))\nabla_h w(t)\right),\partial_t \nabla_h \tilde{w}_j(t) \right)_{\Omega}\right|_{t=0}^T  \right|
  \end{eqnarray*}
  with the convention that $\partial_t w_j:= w_{j+1}$ and $Q_T=\Omega\times (0,T)$. Here  we have used that
  \begin{eqnarray*}
    -\lefteqn{\frac1{h^2}\left(\partial_t \Div_h (D^2\widetilde{W}(\nabla_h u_h)\nabla_h \tilde{w}_j)+\partial_t \Div_h\left( (\partial_{z_j} D^2\widetilde{W}(\nabla_h u_h))\nabla_h w\right),\partial_t^2  \tilde{w}_j \right)_{\Omega}}\\
 &=&\frac1{h^2}\left(\left(\partial_t (D^2\widetilde{W}(\nabla_h u_h(t)))\nabla_h \tilde{w}_j(t)\right),\partial_t^2 \nabla_h \tilde{w}_j(t) \right)_{\Omega}\\
&&-
    \frac1{h^2}\left(\left(\partial_t^2 (\partial_{z_j} D^2\widetilde{W}(\nabla_h u_h(t)))\nabla_h w(t)\right),\partial_t \nabla_h \tilde{w}_j(t) \right)_{\Omega}\\ 
&&+\frac{d}{dt}\frac1{h^2}\left(\left(\partial_t (\partial_{z_j} D^2\widetilde{W}(\nabla_h u_h(t)))\nabla_h w(t)\right),\partial_t \nabla_h \tilde{w}_j(t) \right)_{\Omega}
  \end{eqnarray*}
in the sense of $\mathcal{D}'(0,T)$.
Using 
\begin{eqnarray*}
\lefteqn{\partial_t^2\left[ (\partial_{z_j} D^2\widetilde{W}(\nabla_h u_h))\nabla_h w\right]= \left(\partial_{z_j} D^2\widetilde{W}(\nabla_h u_h)\right)\nabla_h \partial_t^2 w}\\
&&   
 + 
2\left(\partial_t\partial_{z_j} D^2\widetilde{W}(\nabla_h u_h)\right)\nabla_h\partial_t w+   
 \left(\partial_t^2\partial_{z_j} D^2\widetilde{W}(\nabla_h u_h)\right)\nabla_h w   
\end{eqnarray*}
we obtain with the aid of Lemma~\ref{lem:DifEstimAn}
\begin{eqnarray*}
    \lefteqn{\left|\frac1{h^2}\left(\partial_t^2\left( (\partial_{z_j} D^2\widetilde{W}(\nabla_h u_h))\nabla_h w\right),\partial_t \nabla_h \tilde{w}_j \right)_{Q_T}\right|}\\
    &\leq& CR\left(\left\| \frac1h\eps_h (\partial_t^2w)\right\|_{L^2(Q_T)}+ \left\| \frac1h\eps_h (\partial_tw)\right\|_{L^2(0,T;H^1)} + \left\| \frac1h\eps_h (w)\right\|_{L^2(0,T;V)}\right)\\
&&\times \left\| \frac1h\eps_h (\partial_t\tilde{w}_j)\right\|_{L^2(Q_T)}.
\end{eqnarray*}
Therefore this term can be absorbed in the left-hand side with the aid of the Lemma of Gronwall. Moreover,
\begin{eqnarray*}
  \lefteqn{\left|\left.\frac1{h^2}\left(\partial_t \left((\partial_{z_j} D^2\widetilde{W}(\nabla_h u_h))\nabla_h w\right),\partial_t \nabla_h \tilde{w}_j \right)_{\Omega}\right|_{t=0}^T\right|}\\
&\leq & CR \left(\left\| \frac1h\eps_h (\partial_tw)\right\|_{L^\infty(0,T;L^2)}+ \left\| \frac1h\eps_h (w)\right\|_{L^\infty(0,T;H^1)}\right) \left\| \frac1h\eps_h (\partial_t\partial_{z_j}w)\right\|_{L^\infty(0,T;L^2)},
\end{eqnarray*}
where the terms in $(\ldots)$ can be estimated by \eqref{eq:LinEstim'}. Thus applying Young's inequality this term can be absorbed too.

Combining the last estimates yields (\ref{eq:LinEstim}).
\end{proof}

Finally, we consider (\ref{eq:stEq})-(\ref{eq:Initial}) with $f$ replaced
by $-\Div_h f_1 + f_2$ 
in its weak form,
namely:
\begin{eqnarray}\nonumber
  -(\partial_t w,\partial_t \varphi)_{Q_T} &+& \frac1{h^2} (D^2\widetilde{W}(\nabla_h
  u_h)\nabla_h w,\nabla_h \varphi)_{Q_T}\\\label{eq:wEq1}
 &=& (f_1,\nabla_h \varphi)_{Q_T}+ (f_2,\varphi)_{Q_T} + \weight{w_1,\varphi|_{t=0}}_{W'_h,W_h} \\
w & \text{is}& 2L\text{-periodic w.r.t.}\ x_j, j=1,\ldots,d-1,
  \\\label{eq:wEq3}
  w|_{t=0} &=& w_0.
\end{eqnarray}
for all $\varphi \in C^1([0,T];H^1_{per}(\Omega)^d)$ with $\varphi|_{t=T}=0$, where $Q_T= \Omega\times (0,T)$.

Here and in the following we denote by $W_h(\Omega)$ the space $H^1_{per}(\Omega)^d\cap \{u: \int_\Omega u(x)\sd x=0\}$ equipped with the norm
\begin{equation*}
 \|u\|_{W_h(\Omega)}= \left\|\frac1h \eps_h(u)\right\|_{L^2(\Omega)}, \qquad u \in H^1(\Omega)^d
\end{equation*}
and $W_h'(\Omega)$ its dual space with norm
\begin{equation*}
  \|f\|_{W_h'(\Omega)}= \sup \left\{ \left|\weight{f,\varphi}_{W_h',W_h}\right|:u\in H^1_h(\Omega)\ \text{with}\ \left\|\frac1h\eps_h(u)\right\|_{L^2}=1\right\}.
\end{equation*}
Furthermore,
\begin{lemma}\label{lem:LinEstimWeak} 
Assume that $u_h$ satisfies
  (\ref{eq:LInftyUnifEstim}) with $R\in (0,R_0]$ and some given $0<h\leq 1$,  and   let $R_0\in (0,1]$ be so small that \eqref{eq:EstimD2W} and \eqref{eq:UniformCoercive'} hold.
  Let $w\in C^0([0,T];H^1(\Omega))^d\cap C^1([0,T];L^2(\Omega))^d$ be a solution of (\ref{eq:wEq1})-(\ref{eq:wEq3}) for some $f_1\in L^1(0,T;L^2(\Omega)^{d\times d})$, $f_2\in L^1(0,T;L^2(\Omega)^d)$, $w_0\in L^2(\Omega)^d$, and $w_1\in H^1_{per}(\Omega)^d$ and let $u(t)= \int_0^t w(\tau)\sd \tau$. Then there are some $C_0,C>0$  independent of $w$
  and $0<T<\infty$
  such that
  \begin{eqnarray}\nonumber
    \lefteqn{\left\|\left(w, \frac1h \eps_h (u)\right)\right\|_{C([0,T];L^2)}}\\\label{eq:WeakLinEstim}
    &\leq& C_0e^{CRT}\left(\|f_1\|_{L^1(0,T;L^2_h)} + \|f_2\|_{L^1(0,T;L^2)} +
      \left\|w_0\right\|_{L^2} + \|w_1\|_{W'_h(\Omega)}\right).
  \end{eqnarray} 
\end{lemma}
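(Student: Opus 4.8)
The forcing $-\Div_h f$ is only a distribution and the sole regularity at hand is $w\in H^1(Q_T)$, so we can neither differentiate (\ref{eq:wEq1}) in $t$ nor multiply it by $\partial_t w$; the plan is to run the basic hyperbolic energy estimate with the ``antiderivative'' test function. First I would reduce to the case $\int_\Omega w_0\sd x=\int_\Omega w_1\sd x=\int_\Omega f(t)\sd x=0$: the spatial means of $w$ and of $u(t):=\int_0^t w(\tau)\sd\tau$ then solve an explicit second order ODE driven by $\int_\Omega f$, which is estimated directly, and this normalization makes the trace $\varphi|_{t=0}$ and the duality pairing below unambiguous. For fixed $\sigma\in(0,T]$, restrict (\ref{eq:wEq1}) to $(0,\sigma)$ and test with $\varphi(t)=u(\sigma)-u(t)$, so that $\varphi(\sigma)=0$, $\partial_t\varphi=-w$ and $\varphi|_{t=0}=u(\sigma)$; the mild regularity defect (one only has $\varphi\in C([0,\sigma];H^1_{per})\cap C^1([0,\sigma];L^2)$) is removed by mollifying $w$ in time and passing to the limit.

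Integrating by parts in $t$ and using $\varphi(\sigma)=0$, $u(0)=0$ and the symmetry of $A_n(\nabla_h u_h(t))$, the left side of the tested equation becomes $\tfrac12\|w(\sigma)\|_{L^2}^2+\tfrac1{2h^2}(A_n(\nabla_h u_h(0))\nabla_h u(\sigma),\nabla_h u(\sigma))_\Omega$ plus the remainder $\tfrac1{2h^2}\int_0^\sigma(\partial_t A_n(\nabla_h u_h)\nabla_h(u(\sigma)-u(t)),\nabla_h(u(\sigma)-u(t)))_\Omega\sd t$, while the right side is $\tfrac12\|w_0\|_{L^2}^2+(\int_0^\sigma f\sd t,\nabla_h u(\sigma))_\Omega-\int_0^\sigma(f(t),\nabla_h u(t))_\Omega\sd t+\weight{w_1,u(\sigma)}_{H^{-1}_h,H^1_h}$. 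By (\ref{eq:UniformCoercive'}) applied at $t=0$ the quadratic term dominates $\tfrac{c_0}4\|\tfrac1h\eps_h(u(\sigma))\|_{L^2}^2$; moving the (non-sign-definite) remainder to the right, it is bounded via (\ref{eq:EstimD2W}) together with $\|\tfrac1h\eps_h(u(\sigma)-u(t))\|_{L^2}\le\|\tfrac1h\eps_h(u(\sigma))\|_{L^2}+\|\tfrac1h\eps_h(u(t))\|_{L^2}$, the two forcing terms by Korn's inequality (Lemma~\ref{lem:Korn}) and $\|\cdot\|_{L^2}\le\|\cdot\|_{L^2_h}$ (valid since $h\le1$), and the pairing by the definition of $\|\cdot\|_{H^{-1}_h}$. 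Writing $\Phi(\sigma)^2:=\sup_{0\le t\le\sigma}(\|w(t)\|_{L^2}^2+\|\tfrac1h\eps_h(u(t))\|_{L^2}^2)$ and $\mathcal F:=\|f\|_{L^1(0,T;L^2_h)}$, Young's inequality and absorption of small multiples of $\Phi(\sigma)^2$ into the left then yield, with $C_1=C_1(\Omega,W)$,
\begin{equation*}
 \Phi(\sigma)^2\le C_1\big(\|w_0\|_{L^2}^2+\|w_1\|_{H^{-1}_h}^2+\mathcal F^2\big)+C_1R\,\sigma\,\Phi(\sigma)^2+C_1R\int_0^\sigma\Phi(t)^2\sd t .
\end{equation*}

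For $\sigma\le\sigma_\ast:=(2C_1R)^{-1}$ the term $C_1R\sigma\Phi(\sigma)^2$ is absorbed into the left and Gronwall's lemma gives $\Phi(\sigma)\le C_2e^{C_1R\sigma}(\|w_0\|_{L^2}+\|w_1\|_{H^{-1}_h}+\mathcal F)$ with $C_2=C_2(\Omega,W)$ — a short-time estimate on an interval of length $\sim R^{-1}$, with constant independent of $R$ and $T$. To reach an arbitrary $T$ I would iterate over the blocks $[t_{k-1},t_k]$, $t_k=k\sigma_\ast$: on each block $w$ again solves (\ref{eq:wEq1}), now with $\partial_t w(t_{k-1})$ in place of $w_1$, and $\|\partial_t w(t_{k-1})\|_{H^{-1}_h}$ is controlled $h$-uniformly from the time-integrated equation, using $\int_0^t A_n(\tau)\nabla_h w(\tau)\sd\tau=A_n(t)\nabla_h u(t)-\int_0^t\partial_t A_n(\tau)\nabla_h u(\tau)\sd\tau$ (this integration by parts in time is exactly what keeps the bound independent of $h$, since otherwise $\nabla_h w$ would enter), bounding the last integral again by (\ref{eq:EstimD2W}); one gets $\|\partial_t w(t_{k-1})\|_{H^{-1}_h}\le\|w_1\|_{H^{-1}_h}+C\|\tfrac1h\eps_h(u(t_{k-1}))\|_{L^2}+CR\int_0^{t_{k-1}}\|\tfrac1h\eps_h(u)\|_{L^2}\sd t+C\mathcal F$. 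The decisive point is that $R\sigma_\ast$ is a fixed number, so the ``memory'' term contributes at most a fixed multiple of $\sum_{j<k}\Phi(t_j)$; the ensuing linear recursion for $\Phi(t_k)$ grows geometrically with ratio independent of $R$ and $T$, and after $N=\lceil T/\sigma_\ast\rceil\lesssim 1+RT$ blocks one obtains $\Phi(T)\le C_0e^{CRT}(\|w_0\|_{L^2}+\|w_1\|_{H^{-1}_h}+\mathcal F)$, which is (\ref{eq:WeakLinEstim}).

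I expect this passage to large $T$ to be the main obstacle: the genuine time-dependence of the coefficients $A_n(\nabla_h u_h(t))$ combined with the absence of spatial regularity of $w$ forces the term $C_1R\sigma\Phi(\sigma)^2$, so that only a short-time bound is available from the energy identity directly, and the block iteration together with the $h$-uniform control of the restart data $\partial_t w(t_{k-1})$ is the technical heart of the argument. By contrast, $h$-uniformity of all constants is automatic, since coercivity (\ref{eq:UniformCoercive'}), Korn's inequality (Lemma~\ref{lem:Korn}) and (\ref{eq:EstimD2W}) are themselves uniform in $0<h\le1$.
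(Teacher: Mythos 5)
Your proof is correct and follows the same basic strategy as the paper: test the weak form with the anti-derivative $\varphi(t)=u(\sigma)-u(t)$ (the paper writes $\tilde u_{T'}(t)=-\int_t^{T'}w=-\varphi(t)$, which gives the same identity up to sign), use coercivity (\ref{eq:UniformCoercive'}) and (\ref{eq:EstimD2W}) to arrive at the differential inequality with the non-absorbable term $C_1 R\sigma\Phi(\sigma)^2$, absorb for $R\sigma$ small, and then iterate over blocks of length $\sim R^{-1}$ to cover an arbitrary $T$, tracking the geometric growth $(C_0)^N\leq e^{C'RT}$.

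Where your write-up genuinely adds something is in the block iteration. The paper simply says ``we apply the latter estimate successively for some $0=T_0<T_1<\dots<T_N=T$'' and records $(C_0)^N\leq e^{C'_0RT}$, without discussing how the restart data on $[T_j,T_{j+1}]$ — in particular $\partial_t w(T_j)$ in $H^{-1}_h$ — is controlled. You correctly identify this as the technical heart of the passage to large $T$ and close the gap via the time-integrated equation, writing $\int_0^t A_n\nabla_h w=A_n(t)\nabla_h u(t)-\int_0^t\partial_t A_n\nabla_h u$ so that only $\frac1h\eps_h(u)$ (not $\nabla_h w$) enters, keeping the bound $h$-uniform; the ensuing inequality $\|\partial_t w(t_{k-1})\|_{H^{-1}_h}\leq\|w_1\|_{H^{-1}_h}+C\|\frac1h\eps_h(u(t_{k-1}))\|_{L^2}+CR\int_0^{t_{k-1}}\|\frac1h\eps_h(u)\|_{L^2}+C\mathcal F$ is exactly what is needed, and because $R\sigma_\ast$ is a fixed number the memory term contributes only $\sum_{j<k}\Phi(t_j)$ with a bounded coefficient, giving the geometric recursion and hence $e^{CRT}$. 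So your route is the same as the paper's but with an argument that is spelled out where the paper is elliptic; the two agree in conclusion and constant structure.

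One small stylistic point: you don't actually need the preliminary reduction to mean-zero $w_0,w_1,f$ for this lemma — the test function $\varphi(t)=u(\sigma)-u(t)$ is well defined and in $H^1_{per}$ without any normalization, and the pairing $\weight{w_1,u(\sigma)}_{H^{-1}_h,H^1_h}$ is meaningful as soon as $u(\sigma)\in H^1_h(\Omega)$, i.e.\ $\int_\Omega u(\sigma)\sd x=0$, which requires $\int_\Omega w(t)\sd x=0$ for a.e.\ $t$ rather than the stronger normalization you impose. The paper performs an analogous reduction in Lemma~\ref{lem:LinEstim3} but not here. This is harmless, just unnecessary overhead.
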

\begin{proof}
  Let $0\leq T'\leq T$ and define $\tilde{u}_{T'}(t)= -\int_t^{T'}w(\tau) \sd \tau$.
  We choose $\varphi= \tilde{u}_{T'}\chi_{[0,T']}$ in (\ref{eq:wEq1}) (after a standard approximation). Then
  \begin{eqnarray*}
    \lefteqn{\frac12\|w(T')\|_{L^2(\Omega)}^2  +\frac1{2h^2} (D^2\widetilde{W}(\nabla_h
  u_h)\nabla_h \tilde{u}_{T'}(0),\nabla_h \tilde{u}_{T'}(0))_{\Omega}}\\
&=& -\frac1{2h^2} ((\partial_t D^2\widetilde{W}(\nabla_h
  u_h))\nabla_h \tilde{u}_{T'},\nabla_h \tilde{u}_{T'})_{Q_{T'}} - (f_1,\nabla_h \tilde{u}_{T'})_{Q_{T'}}- (f_2,\tilde{u}_{T'})_{Q_{T'}}\\
&& - \weight{w_1, \tilde{u}_{T'}(0)}_{W'_h,W_h}  
  + \frac12\|w_0\|_{L^2(\Omega)}^2
  \end{eqnarray*}
Hence (\ref{eq:UniformCoercive'}), (\ref{eq:EstimD2W}), and $\tilde{u}_{T'}(0)= -u(T')$ imply
\begin{eqnarray*}
  \lefteqn{\|w(T')\|_{L^2(\Omega)}^2 + \left\|\frac1h\eps_h(u(T'))\right\|_{L^2}^2 \leq CR \left\|\frac1h\eps_h(\tilde{u}_{T'})\right\|_{L^2(Q_{T'})}^2+ C \|w_0\|_{L^2(\Omega)}^2}\\
    && 
+ C\left(\|f_1\|_{L^1(0,T;L^2_h)}+\|f_2\|_{L^1(0,T;L^2)}\|w_1\|_{W'_h}\right)\left\|\frac1h \eps_h (u) \right\|_{C([0,T'];L^2)}
\end{eqnarray*}
for all $0\leq T'\leq T$. Since $\tilde{u}_{T'}(t)= -u(T')+u(t)$, we obtain
\begin{equation*}
  \left\|\frac1h\eps_h(\tilde{u}_{T'})\right\|_{L^2(Q_{T'})}^2 \leq
\left\|\frac1h\eps_h(u)\right\|_{L^2(Q_{T'})}^2 + T'\left\| \frac1h\eps_h(u(T'))\right\|_{L^2(\Omega)}^2.
\end{equation*}
Hence there is some $\kappa>0$ independent of $R\in (0,R_0]$, $h\in (0,1]$, such that
\begin{eqnarray*}
  \lefteqn{\|w\|_{C([0,T'];L^2)}^2 + \left\|\frac1h\eps_h(u)\right\|_{C([0,T'];L^2)}^2\leq CR \left\|\frac1h\eps_h(u)\right\|_{L^2(Q_{T'})}^2 }\\
    && + C_0\left( \|w_0\|_{L^2(\Omega)}^2+\|w_1\|_{W'_h(\Omega)}^2 + \|f_1\|_{L^1(0,T;L^2_h)}^2+ \|f_2\|_{L^1(0,T;L^2)}^2\right)
\end{eqnarray*}
provided that $RT'\leq \kappa$. By the lemma of Gronwall we obtain (\ref{eq:WeakLinEstim}) for all $0<T<\infty$ such that $RT\leq \kappa$. Now, if $0<T<\infty$ with $RT> \kappa$, we apply the latter estimate successively for some $0=T_0<T_1<\ldots< T_N=T$ such that $R (T_{j+1}-T_j) \leq \kappa$, $j=0,\ldots,N-1$, and $N \leq 2 R\kappa^{-1}T$.  Hence we obtain
  \begin{eqnarray*}
    \lefteqn{\left\|\left(w, \frac1h \eps_h (u)\right)\right\|_{C([0,T];L^2)}}\\
    &\leq& (C_0)^Ne^{CRT}\left(\|f_1\|_{L^1(0,T;L^2_h)}+\|f_2\|_{L^1(0,T;L^2)} +
      \left\|w_0\right\|_{L^2} + \|w_1\|_{W'_h(\Omega)}\right),
  \end{eqnarray*}
  where 
  \begin{equation*}
    (C_0)^N\leq \exp \left(2\kappa^{-1}\ln C_0 RT \right)\leq \exp(C'_0RT)
  \end{equation*}
  since $N \leq 2 R\kappa^{-1}T$. This implies (\ref{eq:WeakLinEstim}) for some modified $C_0,C$ independent of $R\in (0,R_0]$, $h\in (0,1]$, $0<T<\infty$.
\end{proof}

\subsection{Uniform bounds and Proof of Theorem~\ref{thm:main1}}\label{sec:uniform}


For the following we assume that $\theta\geq 0$, $0<T\leq 1$, and  $u_{j,h}, j=0,\ldots,4, f_h$ are as in Theorem~\ref{thm:main1}. Moreover, we assume that $R_0\in (0,1]$ is so small that all the statements in Section~\ref{sec:Linear} are applicable. -- Note that $T\leq 1$ is not a restriction for the proof of Theorem~\ref{thm:main1}. By a simple scaling with $T^{-1}$ in time $t$ and $h$ we can always reduce to this case changing $M>0$ by a  certain factor depending on $T$ if necessary. (Of course this finally influences the smallness assumption of $h_0>0$ in the case $\theta>0$ and the smallness assumption on $M$ if $\theta=0$.) 

Moreover,
let $C_L\geq 1$ be the constant in Lemma~\ref{lem:LinEstim3}. Then (\ref{eq:AssumInitialData1})-(\ref{eq:AssumFData})  imply 
\begin{eqnarray}\nonumber
 \lefteqn{ \max_{|\gamma|\leq 2}\left\|h^{1+\theta}\partial_z^\gamma f_h\right\|_{W^1_1([0,T];L^2)}+ \left\|\frac1h \eps_h(u_{0,h})\right\|_{H^1(\Omega)} }\\\label{eq:DataEstim}
 && + \max_{k=0,1,2}\left\|\left(\frac1h \eps_h (u_{k+1,h}),u_{k+2,h}\right)\right\|_{H^{2-k,0}(\Omega)}\leq \tilde{M} h^{1+\theta}\qquad \qquad\quad
\end{eqnarray}
where $\tilde{M}= C_0M$ for some universal constant $C_0\geq 1$.
If $\theta>0$, we can find some $h_0\in (0,1]$ (depending on $M$) such that $R:=6C_L \tilde{M} h_0^{\theta}\leq R_0$. 
If
$\theta=0$, we assume that $M>0$ is so small that $R:=6C_L \tilde{M}\leq R_0$. 
In this case we set $h_0=1$.

Let $u_h$ be the solution of  (\ref{eq:Evol1})-(\ref{eq:BC3}) due Theorem~\ref{thm:ShortTimeExistence}. 

Since $u_h\in C^4([0,T_{max}(h));L^2)\cap C^0([0,T_{max}(h));H^4)$, there is some $T'=T'(h)\in (0,T_{max}(h))$ such that 
 \begin{equation}\label{eq:BddUn} 
 \max_{|\gamma|\leq 2} \left\|\left(\partial_t^2\partial_z^\gamma u_h, \frac1h\eps_h(\partial_z^\gamma u_h),
        \nabla_{x,t}\frac1h\eps_h(\partial_z^\gamma u_h)\right)\right\|_{C([0,T'];L^2)}\leq 3C_L\tilde{M}h^{1+\theta}
  \end{equation}
where $3C_L\tilde{M}h^\theta <R_0$.
Hence $u_h$ satisfies \eqref{eq:UniformEstim} and we can appy Lemma~\ref{lem:LinEstim3}.

To this end we use that $w^j_h= \partial_{z_j}u_h$, $j=0,\ldots, d-1$, solves
\begin{eqnarray}\label{eq:Approx1j}
  \partial_t^2 w^j_h - \frac1{h^2} \Div_h D^2\widetilde{W}(\nabla_h u)\nabla_h w^j_h
  &=& \partial_{z_j} f_h h^{1+\theta} \quad \text{in}\ \Omega\times (0,T')\\\label{eq:Approx2j}
  \left. D^2\widetilde{W}(\nabla_h u_h)\nabla_h w^j_h e_d\right|_{x_d=\pm \frac12} &=&0, 
   \\\label{eq:Approx3j}
 w^j_h &\text{is} & 2L\text{-periodic in}\ x_j, j=1,d-1,\\\label{eq:Approx4j}
  \left. (w^j_h,\partial_t w^j_h)\right|_{t=0} &=& (w_{0,h}^j, w_{1,h}^j)
\end{eqnarray}
with $w^j_{k,h}= \partial_{x_j} u_{k,h}$, $k=0,1$ if $j=1,\ldots,d-1$ and $w^0_{k,h}= u_{k+1,h}$.
Hence applying Lemma~\ref{lem:LinEstim3} we obtain
 \begin{equation*}
  \max_{|\gamma|= 1,2} \left\|\left(\partial_t^2\partial_{z}^\gamma u_h,\partial_{z}^\gamma\frac1h\eps_h(u_h),
       \nabla_{x,t}\partial_{z}^\gamma\frac1h\eps_h(u_h)\right)\right\|_{C([0,T'];L^2)}\leq 2C_Le^{C'\tilde{M}h^\theta}\tilde{M}h^{1+\theta}
  \end{equation*}
  uniformly in $0<h\leq h_0$.  Due to (\ref{eq:DataEstim}) and
\begin{equation*}
  \nabla_hu_h =  \nabla_h u_{0,h} + \int_0^t \nabla_h w_h^0(\tau) \sd \tau,  
\end{equation*}
 we conclude
 \begin{equation*}
  \max_{|\gamma|\leq 2} \left\|\left(\partial_t^2\partial_{z}^\gamma u_h,\partial_{z}^\gamma\frac1h\eps_h(u_h),
       \nabla_{x,t}\partial_{z}^\gamma\frac1h\eps_h(u_h)\right)\right\|_{C([0,T'];L^2)}\leq 4C_Le^{C'\tilde{M}h^\theta}\tilde{M}h^{1+\theta}.
  \end{equation*}
If $\theta>0$, we can now choose $0<h_0\leq 1$ so small that
 \begin{equation}\label{eq:Estim5}
  \max_{|\gamma|\leq 2} \left\|\left(\partial_t^2\partial_{z}^\gamma u_h,\partial_{z}^\gamma\frac1h\eps_h(u_h),
       \nabla_{x,t}\partial_z^\gamma\frac1h\eps_h(u_h)\right)\right\|_{C([0,T'];L^2)}\leq 5C_L\tilde{M}h^{1+\theta} 
  \end{equation}
  uniformly in $0<h\leq h_0$ where $5C_L\tilde{M}h_0^{\theta}\leq R_0 $. If $\theta=0$, then we choose $\tilde{M}=C_0M$
  sufficiently small to obtain the same estimates. Since $u_h\in C^4([0,T_{max}(h));L^2)\cap C^0([0,T_{max}(h));H^4)$, we can repeat the estimates above and conclude that \eqref{eq:Estim5} holds for $T'=\min (1, T'')$ for any $0<T''<T_{max}(h)$. In particular this shows that $u_h $ cannot leave $U_h$, where $U_h$ is as in Remark~\ref{rem:Neighborhood}, and 
  \begin{equation*}
    \int_0^{T''} \|\nabla_{x,t} u_h(t)\|_{L^\infty(\Omega)}\sd t <\infty.
  \end{equation*}
 Hence the characterization of $T_{max}(h)$ in Theorem~\ref{thm:ShortTimeExistence} shows that  we can choose $T'=T$ in \eqref{eq:Estim5}. Therefore Theorem~\ref{thm:main1} is proved.

\section{First Order Asymptotics}\label{sec:Asymptotics}

Throughout this section we assume that
$$
f_h(x,t) = 
\begin{pmatrix}
  0 \\ g(x',t)
\end{pmatrix},
$$
for some given  $g\in \bigcap_{j=0}^3W^j_1(0,T;H^{10-2j}_{per}((-L,L)^{d-1}))$ independent of $h$. For simplicity let $W(F)= \dist(F,SO(d))^2$, which implies $D^2W(0)F= \sym F$. 
 As seen in the proof of Lemma~\ref{lem:LinEstim3}, 
we can assume without loss of generality that $\int_{(-L,L)^{d-1}} g(x',t) dx'=0$ for all $t\in [0,T]$. Moreover, we assume that $0<\theta \leq 1$.

In this section we construct an approximate solution to the $d$-dimensional system (\ref{eq:Evol1})-(\ref{eq:BC3}) with the aid of a solution to a $(d-1)$-wave equation. 
The ansatz for such an approximate solution is
\begin{eqnarray*}
  \tilde{u}_h(x,t) &=&
  h^{\theta}\begin{pmatrix}
    0\\ hv(x',t)
  \end{pmatrix}
  +
 h^{2+\theta}\begin{pmatrix}
    -x_d \nabla_{x'} v(x',t)\\ 0
  \end{pmatrix} + O(h^{3+\theta}).
\end{eqnarray*}
Then 
\begin{eqnarray*}
  \eps_h(\tilde{u}_h(x,t)) &=&
  \begin{pmatrix}
    h^{2+\theta}x_d\nabla_{x'} v(x',t) & 0 \\ 0& 0
  \end{pmatrix}
\end{eqnarray*}
and therefore
\begin{equation*}
  E^h(\Id+\tilde{u}_h(t))= \int_\Omega \left(\widetilde{W}(\nabla_h \tilde{u}_h(x,t))- h^2g_h(x',t)h^{1+\theta} v(x',t) \right) \sd x = O(h^{4+2\theta})
\end{equation*}
since $\tilde{f}_h=h^\theta g$, cf. Introduction. In order to get a solution of (\ref{eq:Evol1})-(\ref{eq:BC3}), where \eqref{eq:Evol1} is solved in highest order, suitable higher order corrections have to be adapted and $v$ will be determined by a $(d-1)$-dimensional wave equation. Moreover, we will determine suitable ``well prepared initial data'' $(u_{0,h},u_{1,h})$ (independence of the initial data for $v$) such that Theorem~\ref{thm:main1} is applicable and yields a solution $u_h$ of (\ref{eq:Evol1})-(\ref{eq:BC3}). Then we will be able to show that $u_h-\tilde{u}_h$ is of order $O(h^{1+2\theta})$.

More precisely:
Let  $v$ be the solution of the $(d-1)$-dimensional wave equation
\begin{alignat*}{2}
  \partial_t^2 v + \frac1{12}\Delta^2_{x'} v &= g &\quad & \text{in}\  (-L,L)^{d-1}\times (0,T),\\
v &\ \text{is}\ 2L\text{-periodic} &&\text{in}\ x_j,  j=1,\ldots,d-1,\\
(v, \partial_t v)|_{t=0}&= (v_0, v_1) &&\text{in}\  (-L,L)^{d-1},
\end{alignat*}
where $v_0 \in H^{12}_{per}((-L,L)^{d-1}), v_1 \in H^{10}_{per}((-L,L)^{d-1})$.
By standard methods the latter system possesses a unique solution 
\begin{alignat*}{1}
v \in &\, \bigcap_{j=0}^4C^j([0,T];H^{12-2j}_{per}((-L,L)^{d-1}))
\end{alignat*}
Using $v$, we define an approximate solution
$\tilde{u}_h$ of (\ref{eq:Evol1})-(\ref{eq:BC3}) by
\begin{eqnarray*}
  \tilde{u}_h(x,t) &=&
  h^{\theta}\begin{pmatrix}
    0\\ hv
  \end{pmatrix}
  +
 h^{2+\theta}\begin{pmatrix}
    -x_d \nabla_{x'} v\\ 0
  \end{pmatrix}
  +
 h^{4+\theta}\begin{pmatrix}
   (\frac13x_d^3-\frac14x_d) \nabla_{x'} \Delta_{x'}v\\ 0
  \end{pmatrix}\\
&&  +
 h^{5+\theta}\begin{pmatrix}
   0\\(\frac1{48}x_d^2-\frac1{24}x_d^4-\frac1{24\cdot 16}) \Delta_{x'}^2v
  \end{pmatrix}.
\end{eqnarray*}
Then
\begin{eqnarray*}
  \nabla_h  \tilde{u}_h &=& h^{1+\theta}
  \begin{pmatrix}
    0 &-\nabla_{x'} v\\
    \nabla_{x'}v^T & 0
  \end{pmatrix}
  +
h^{2+\theta}
  \begin{pmatrix}
    -x_d\nabla_{x'}^2 v &h(x_d^2-\frac14)\nabla_{x'} \Delta_{x'}v\\
    0 & 0
  \end{pmatrix}\\
&&+h^{4+\theta}
  \begin{pmatrix}
     (\frac13x_d^3-\frac14x_d) \nabla_{x'}^2 \Delta_{x'}v&0\\
    0 & (\frac1{24}x_d-\frac1{6}x_d^3) \Delta_{x'}^2v
  \end{pmatrix}\\
&&+h^{5+\theta}
  \begin{pmatrix}
     0 &0\\
    (\frac1{48}x_d^2-\frac1{24}x_d^4-\frac1{24\cdot 16}) \nabla_{x'}^T\Delta_{x'}^2v &0 
  \end{pmatrix}.
\end{eqnarray*}
Hence
\begin{eqnarray*}
 \lefteqn{\eps_h  (\tilde{u}_h) = 
h^{2+\theta}
  \begin{pmatrix}
    -x_d\nabla_{x'}^2 v &\frac{h}2(x_d^2-\frac14)\nabla_{x'} \Delta_{x'}v\\
    \frac{h}2(x_d^2-\frac14)\nabla_{x'}^T \Delta_{x'}v & h^2 (\frac1{24}x_d-\frac1{6}x_d^3) \Delta_{x'}^2v
  \end{pmatrix}}\\
&&+h^{4+\theta}
  \begin{pmatrix}
     (\frac13x_d^3-\frac14x_d) \nabla_{x'}^2 \Delta_{x'}v&\frac{h}2(\frac1{48}x_d^2-\frac1{24}x_d^4-\frac1{24\cdot 16}) \nabla_{x'}\Delta_{x'}^2v\\
    \frac{h}2(\frac1{48}x_d^2-\frac1{24}x_d^4-\frac1{24\cdot 16}) \nabla_{x'}^T\Delta_{x'}^2v & 0
  \end{pmatrix}
\end{eqnarray*}
and therefore
\begin{equation}\label{eq:BCApproxSol}
  \frac1h \eps_h(\tilde{u}_h)e_d|_{x_d=\pm\frac12} =0.
\end{equation}
Moreover,
\begin{eqnarray*}
  \frac1{h^2}\Div_h \eps_h  (\tilde{u}_h) &=& 
h^{\theta}
  \begin{pmatrix}
    -x_d\nabla_{x'}\Delta_{x'} v +x_d\nabla_{x'} \Delta_{x'}v\\
    \frac{h}2(x_d^2-\frac14)\Delta_{x'}^2v + h (\frac1{24}-\frac1{2}x_d^2) \Delta_{x'}^2v
  \end{pmatrix}\\
&&+h^{2+\theta}
  \begin{pmatrix}
     (\frac13x_d^3-\frac14x_d) \nabla_{x'} \Delta_{x'}^2v+\frac{1}2(\frac1{24}x_d-\frac1{6}x_d^3) \nabla_{x'}\Delta_{x'}^2v\\
    \frac{h}2(\frac1{48}x_d^2-\frac1{24}x_d^4-\frac1{24\cdot 16})
    \Delta_{x'}^3v 
  \end{pmatrix}
\\
&\equiv&
h^{1+\theta}
  \begin{pmatrix}
    0\\
    -\frac1{12}\Delta_{x'}^2v 
  \end{pmatrix}+\tilde{r}_h,
\end{eqnarray*}
where 
\begin{equation*}
  \|\tilde{r}_h\|_{C^2([0,T];L^2(\Omega))}\leq Ch^{2+\theta}.
\end{equation*}
Thus $\tilde{u}_h$ is a solution of 
\begin{eqnarray}\label{eq:ApproxEvol1}
  \partial_t^2 \tilde{u}_h - \frac1{h^2} \Div_h\left( D^2\widetilde{W}(0)\nabla_h \tilde{u}_h\right) &=& f_h h^{1+\theta} -r_h \qquad \text{in}\ \Omega\times (0,T),\\\nonumber
  \left. \left(D^2\widetilde{W}(0)\nabla_h \tilde{u}_h\right) e_d\right|_{x_d=\pm \frac12} &=& 0, \\\nonumber
\tilde{u}_h & \text{is}& 2L\text{-periodic in}\ x_j,  j=1,\ldots,d-1,\\\nonumber
  \left. (\tilde{u}_h,\partial_t \tilde{u}_h)\right|_{t=0} &=& (\tilde{u}_{0,h}, \tilde{u}_{1,h}),
\end{eqnarray}
where
\begin{eqnarray*}
  \tilde{u}_{j,h}(x) &=&
  h^{1+\theta}\begin{pmatrix}
    0\\ v_j
  \end{pmatrix}
  +
 h^{2+\theta}\begin{pmatrix}
    -x_d \nabla_{x'} v_j\\ 0
  \end{pmatrix}
  +
 h^{4+\theta}\begin{pmatrix}
   (\frac13x_d^3-\frac14x_d) \nabla_{x'} \Delta_{x'}v_j\\ 0
  \end{pmatrix}\\
&&  +
 h^{5+\theta}\begin{pmatrix}
   0\\(\frac1{48}x_d^2-\frac1{24}x_d^4-\frac1{24\cdot 16}) \Delta_{x'}^2v_j
  \end{pmatrix}, \quad j=0,1,2,
\end{eqnarray*}
$v_j= \partial_t^jv|_{t=0}$, and
\begin{equation}\label{eq:rhEstim}
  \|r_h\|_{C^2([0,T];L^2(\Omega))}\leq Ch^{2+\theta}.
\end{equation}

We will compare this approximate solution  with the exact solution of the $d$-dimensional system (\ref{eq:Evol1})-(\ref{eq:BC3}) for an appropriate choice of initial values. 
\begin{theorem}\label{thm:FirstOrderAsymp}
  Let $0<\theta \leq 1$, let $v_0,v_1,f_h$, $\tilde{u}_{0,h}, \tilde{u}_{1,h}$ and $\tilde{u}_h$ be defined as above. 
Then for some sufficiently small $h_0\in (0,1]$ and $h\in (0,h_0]$ there are initial values $(u_{0,h},u_{1,h})$ satisfying (\ref{eq:AssumInitialData1}) and such that
\begin{equation*}
  \max_{j=0,1,2}\left\|\frac1h \eps_h(u_{j,h}) -\frac1h \eps_h(\tilde{u}_{j,h})\right\|_{L^2(\Omega)} \leq C h^{1+2\theta}.
\end{equation*}
Moreover, if $u_h$ is the solution of (\ref{eq:Evol1})-(\ref{eq:BC3}), whose existence is assured by Theorem~\ref{thm:main1}, then
\begin{eqnarray*}
 \left\|\left(\partial_t (u_h - \tilde{u}_h), \frac1h \eps_h(u_h-\tilde{u}_h) \right)\right\|_{L^\infty(0,T;L^2)}\leq Ch^{1+2\theta}
\end{eqnarray*}
for all $0<h\leq h_0$ and some $C>0$ independent of $h$.
\end{theorem}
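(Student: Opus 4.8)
The strategy is to exhibit $z_h:=u_h-\tilde u_h$ as a solution of the \emph{constant-coefficient} linear wave system $\partial_t^2\cdot-\tfrac1{h^2}\Div_h(D^2\widetilde W(0)\nabla_h\cdot)$ with a forcing of size $O(h^{2+\theta})$ in $L^1(0,T;L^2)$ together with a divergence-form term coming from the nonlinearity, and then to close the basic energy estimate.

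I would first construct the initial data. Differentiating the plate equation at $t=0$ gives $\partial_t^2v|_{t=0}=g|_{t=0}-\tfrac1{12}\Delta_{x'}^2v_0$ and $\partial_t^3v|_{t=0}=\partial_tg|_{t=0}-\tfrac1{12}\Delta_{x'}^2v_1$; with these, set
\begin{equation*}
  u^{(0)}_{k+2,h}:=h^{1+\theta}\begin{pmatrix}0\\ \partial_t^{k+2}v|_{t=0}\end{pmatrix}+h^{2+\theta}\begin{pmatrix}-x_d\nabla_{x'}\partial_t^{k+2}v|_{t=0}\\ 0\end{pmatrix},\qquad k=0,1,
\end{equation*}
i.e.\ the leading two terms of $\partial_t^{k+2}\tilde u^h|_{t=0}$. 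By the same skew-symmetry cancellation that makes $\eps_h(\tilde u^h)=O(h^{2+\theta})$, the $h^{1+\theta}$-parts of $\nabla_h u^{(0)}_{k+2,h}$ are skew, so $\tfrac1h\eps_h u^{(0)}_{k+2,h}=O(h^{1+\theta})$ in $H^{1,0}$, and $\int_\Omega u^{(0)}_{k+2,h}=0$ because $\int_{\Omega'}g=0$. Now let $u_{0,h}$ be the solution of Proposition~\ref{prop:SolvIV} for $n=\infty$ (so $F_\infty(\nabla_h\cdot)=D\widetilde W(\nabla_h\cdot)$) with datum $f:=h^{1+\theta}f_h|_{t=0}-u^{(0)}_{2,h}$, which is mean-free and obeys $\|f\|_{H^{2,0}}\le Ch^{1+\theta}\le M_0h$ for small $h$; with $u_{0,h}$ fixed, let $u_{1,h}$ be the solution obtained from the Lax--Milgram argument of Remark~\ref{rem:SolvIV} together with Lemma~\ref{lem:LinEstim2}, for the coercive form $\tfrac1{h^2}(D^2\widetilde W(\nabla_h u_{0,h})\nabla_h\cdot,\nabla_h\cdot)$ and datum $\hat f_1:=h^{1+\theta}\partial_tf_h|_{t=0}-u^{(0)}_{3,h}$, $\|\hat f_1\|_{H^{1,0}}\le Ch^{1+\theta}$; normalise the additive constants by $\int_\Omega u_{j,h}=\int_\Omega\tilde u_{j,h}$. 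The weak formulations give the Neumann conditions in (\ref{eq:BC1}), the quantities $u_{2,h},u_{3,h}$ of (\ref{eq:u2h})--(\ref{eq:u3h}) coincide with $u^{(0)}_{2,h},u^{(0)}_{3,h}$, and (\ref{eq:AssumInitialData1}) holds, so Theorem~\ref{thm:main1} provides $u_h$ satisfying (\ref{eq:UnifBdd1}). To see $\|\tfrac1h\eps_h(u_{j,h}-\tilde u_{j,h})\|_{L^2}\le Ch^{1+2\theta}$, subtract from the defining equation of $u_{j,h}$ the weak equation of $\tilde u_{j,h}$ (use that $\tilde u^h$ solves (\ref{eq:ApproxEvol1}) with $\tfrac1h\eps_h(\tilde u^h)e_d|_{x_d=\pm1/2}=0$), test with $u_{j,h}-\tilde u_{j,h}$, and apply coercivity, Korn and Poincar\'e: the defect consists of $O(h^{2+\theta})$ remainders ($h^{2+\theta}$-correctors and $r_h|_{t=0}$, $\partial_tr_h|_{t=0}$, the latter being $O(h^{2+\theta})$ exactly as in (\ref{eq:rhEstim})) together with quadratic terms $\tfrac1{h^2}(F'(\nabla_h u_{0,h}),\nabla_h\cdot)$ and $\tfrac1{h^2}((D^2\widetilde W(\nabla_h u_{0,h})-D^2\widetilde W(0))\nabla_h\tilde u_{1,h},\nabla_h\cdot)$, which Corollary~\ref{cor:D3W} and $\|\nabla_h u_{0,h}\|,\|\nabla_h\tilde u_{1,h}\|=O(h^{1+\theta})$ bound by $O(h^{1+2\theta})$.

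Now put $z_h=u_h-\tilde u_h$. Subtracting the weak forms of (\ref{eq:Evol1}) and (\ref{eq:ApproxEvol1}), the natural boundary conditions on both solutions remove every boundary term, and Taylor's formula (\ref{eq:DefnF'}) (with $D^2\widetilde W(0)F=\sym F$) shows that $z_h$ solves, weakly, $(\partial_t^2z_h,\varphi)_\Omega+\tfrac1{h^2}(\eps_h(z_h),\nabla_h\varphi)_\Omega=(r_h,\varphi)_\Omega-\tfrac1{h^2}(F'(\nabla_h u_h),\nabla_h\varphi)_\Omega$, with data $z_h|_{t=0}=u_{0,h}-\tilde u_{0,h}$, $\partial_tz_h|_{t=0}=u_{1,h}-\tilde u_{1,h}$. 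Testing with $\partial_tz_h$ (justified via (\ref{eq:HNormDifferential})) yields
\begin{equation*}
  \tfrac12\tfrac{d}{dt}\Big(\|\partial_tz_h\|_{L^2}^2+\big\|\tfrac1h\eps_h(z_h)\big\|_{L^2}^2\Big)=(r_h,\partial_tz_h)_\Omega-\tfrac1{h^2}(F'(\nabla_h u_h),\nabla_h\partial_tz_h)_\Omega .
\end{equation*}
The $r_h$-term integrates to $\le\|r_h\|_{L^1(0,T;L^2)}\|\partial_tz_h\|_{L^\infty(0,t;L^2)}\le Ch^{2+\theta}\|\partial_tz_h\|_{L^\infty(0,t;L^2)}$ by (\ref{eq:rhEstim}). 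For the nonlinear term I integrate by parts in time, $-\tfrac1{h^2}(F'(\nabla_h u_h),\nabla_h\partial_tz_h)=-\tfrac{d}{dt}\big[\tfrac1{h^2}(F'(\nabla_h u_h),\nabla_hz_h)\big]+\tfrac1{h^2}(\partial_tF'(\nabla_h u_h),\nabla_hz_h)$; since $F'(\nabla_h u_h)$ and $\partial_tF'(\nabla_h u_h)$ are expressed through $D^3\widetilde W,D^4\widetilde W,D^5\widetilde W$ evaluated at the small argument $\nabla_h u_h$, Corollary~\ref{cor:D3W} with (\ref{eq:UnifBdd1}) and Korn give $\tfrac1{h^2}|(F'(\nabla_h u_h),\nabla_hw)_\Omega|+\tfrac1{h^2}|(\partial_tF'(\nabla_h u_h),\nabla_hw)_\Omega|\le Ch^{1+2\theta}\|\tfrac1h\eps_h(w)\|_{L^2}$ for all periodic $w$, the decisive gain being the factor $h$ in $|D^3\widetilde W(0)|_h\le Ch$ against two factors $\|\nabla_h u_h\|\sim h^{1+\theta}$. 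Integrating over $[0,t]$, inserting $\|\tfrac1h\eps_h(z_h(0))\|_{L^2}+\|\partial_tz_h(0)\|_{L^2}\le Ch^{1+2\theta}$, and writing $E_*(t):=\sup_{[0,t]}(\|\partial_tz_h\|_{L^2}^2+\|\tfrac1h\eps_h(z_h)\|_{L^2}^2)$, one gets $E_*(t)\le Ch^{2+4\theta}+C_T\,h^{1+2\theta}\sqrt{E_*(t)}$ (using $0<\theta\le1$, so $h^{1+2\theta}$ dominates $h^{2+\theta}$); solving this quadratic inequality gives $\sqrt{E_*(T)}\le Ch^{1+2\theta}$, which is the claim.

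The main obstacle is the construction of admissible initial data. The nonlinear compatibility conditions (\ref{eq:BC1}) must hold \emph{exactly}, while $u_{2,h},u_{3,h}$ in (\ref{eq:u2h})--(\ref{eq:u3h}) carry a $\tfrac1{h^2}\Div_h$ of the data, so a crude perturbation of $\tilde u_{j,h}$ will not satisfy (\ref{eq:AssumInitialData1}); one must instead prescribe the correctly structured $u^{(0)}_{k+2,h}$ (whose $\tfrac1h\eps_h$ is only $O(h^{1+\theta})$ thanks to the same cancellation as for $\eps_h(\tilde u^h)$) and then solve for $u_{0,h},u_{1,h}$ through the stationary operators, relying on the precise elliptic estimates of Proposition~\ref{prop:SolvIV} and Lemma~\ref{lem:LinEstim2}; the smoothness assumed of $v_0,v_1,g$ is exactly what these estimates consume. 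A secondary point needing care is that $\tfrac1{h^2}F'(\nabla_h u_h)$ genuinely enters at order $h^{1+2\theta}$ rather than at the naive order $h^{2\theta}$, which again rests on the scaled-norm gain of Lemma~\ref{lem:D3G}/Corollary~\ref{cor:D3W}.
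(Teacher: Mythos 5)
Your argument is correct and the skeleton coincides with the paper's: construct $(u_{0,h},u_{1,h})$ from the stationary operators via Proposition~\ref{prop:SolvIV} and Lax--Milgram, bound $\frac1h\eps_h(u_{j,h}-\tilde u_{j,h})$ by subtracting the stationary equations and using the scaled $D^3\widetilde W$-estimate, then control $u_h-\tilde u_h$ by an energy estimate using that $\tilde u_h$ solves (\ref{eq:ApproxEvol1}) with $r_h=O(h^{2+\theta})$. The final energy step, however, is genuinely different from the paper's. The paper works with $w_h=\partial_t(u_h-\tilde u_h)$, keeps the variable coefficient $D^2W(\nabla_h u_h)$ on the left, moves $\left(D^2W(\nabla_h u_h)-D^2W(0)\right)\nabla_h\partial_t\tilde u_h$ to the right as a divergence-form source, and then invokes the pre-packaged Gronwall estimate of Lemma~\ref{lem:LinEstimWeak}, so that the $H^{-1}_h$-norm of $\partial_t^2(u_h-\tilde u_h)|_{t=0}$ enters. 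You instead write the \emph{constant-coefficient} wave equation for $z_h=u_h-\tilde u_h$ with all of the nonlinearity on the right in the form $\frac1{h^2}\Div_h F'(\nabla_h u_h)$, test with $\partial_t z_h$, and dispose of the time derivative in the nonlinear pairing by an integration by parts in $t$. Both routes hinge on the same gain $|D^3\widetilde W(0)|_h\le Ch$ from Lemma~\ref{lem:D3G}/Corollary~\ref{cor:D3W} to get the $h^{1+2\theta}$ rate; yours is more elementary and does not re-use Lemma~\ref{lem:LinEstimWeak}, while the paper's avoids the extra time-boundary term. One further remark: your $u^{(0)}_{k+2,h}$, built from $\partial_t^{k+2}v|_{t=0}$, is precisely what cancels $\partial_t^{k+2}\tilde u^h|_{t=0}$ to order $h^{2+\theta}$ and hence what the estimate for $w_{1,h}$ needs; the paper's displayed $u_{2+j,h}$ records only the $-\frac1{12}\Delta_{x'}^2\partial_t^j v|_{t=0}$ contribution and omits the $g$-part of $\partial_t^{j+2}v|_{t=0}$, which appears to be a slip in the paper — your formula is the internally consistent one.
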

\begin{proof}
We construct the initial values 
$(u_{0,h}, u_{1,h})$ such that $(u_{0,h}, u_{1,h}, u_{2,h})$ solve the system
\begin{eqnarray}\label{eq:EqIV1}
  \frac1{h^2}(DW(\nabla_h u_{0,h}),\nabla_h \varphi)_\Omega &=&    (h^{1+\theta}f_h|_{t=0},\varphi)_\Omega- (u_{2,h},\varphi)_\Omega,\\\label{eq:EqIV2}
  \frac1{h^2}(D^2W(\nabla_h u_{0,h})\nabla_h u_{1,h},\nabla_h \varphi)_\Omega &=&   (h^{1+\theta}\partial_t f_h|_{t=0},\varphi)_\Omega- (u_{3,h},\varphi)_\Omega 
\end{eqnarray}
and
\begin{eqnarray}\nonumber
  \lefteqn{\frac1{h^2}(D^2W(\nabla_h u_{0,h})\nabla_h u_{2,h},\nabla_h \varphi)_\Omega=  \left(h^{1+\theta}\partial_t^2 f_h|_{t=0}-u_{4,h},\varphi\right)_\Omega}\\\label{eq:EqIV3}
 &&  - \frac1{h^2}\left(D^3W(\nabla_h u_{0,h})(\nabla_h u_{1,h},\nabla_h u_{1,h}),\nabla_h \varphi\right)_\Omega 
\end{eqnarray}
for all $\varphi\in H^1_{per}(\Omega)^d$, where
\begin{equation}\label{eq:ast2}
  u_{2+j,h} = h^{1+\theta}
  \begin{pmatrix}
    0 \\  v_{2+j}
  \end{pmatrix}
  +h^{2+\theta}
  \begin{pmatrix}
    -x_d \nabla_{x'} v_{2+j}\\ 0  
  \end{pmatrix}, \quad j=1,2,
\end{equation}
and $v_{2+j}= \partial_{t}^{2+j} v|_{t=0}$.
Hence $\int_\Omega u_{2+j,h} \sd x=0$ for $j=1,2$ and
\begin{equation*}
  \frac1h \eps_h(u_{3,h}) = h^{1+\theta}
  \begin{pmatrix}
    -x_d\nabla_{x'}^2 v_{3} & 0 \\
    0 & 0
  \end{pmatrix}.
\end{equation*}
In particular, this implies
\begin{equation*}
  \left\|\left(u_{4,h},\frac1h \eps_h(u_{3,h}),h^{1+\theta}\partial_t^2 f_h|_{t=0}\right)\right\|_{L^2}+ \left\|\left(u_{3,h}, h^{1+\theta}\partial_t f|_{t=0}\right)\right\|_{H^{1,0}}\leq Ch^{1+\theta}, 
\end{equation*}
where we note that $f$ is independent of $x_d$.
 Because of Proposition~\ref{prop:SolvIV} below, $(u_{0,h},u_{1,h},u_{2,h})$ exist for all $0<h\leq h_0$ if $h_0\in (0,1]$ is sufficiently small and satisfy (\ref{eq:AssumInitialData1})  and 
 \begin{eqnarray*}
\max_{j=0,1,k=0,1,2}\left\|\left(\nabla^j\frac1h \eps_h (u_{k,h}),\nabla_h^2 u_{k,h}\right)\right\|_{H^{2-k,0}} &\leq& Ch^{1+\theta},
 \end{eqnarray*}
In particular, $u_{j,h}$, $j=0,\ldots,4$ satisfy
(\ref{eq:AssumInitialData1}) and (\ref{eq:u2h})-(\ref{eq:u4h}).   
Moreover, we have that 
\begin{equation}\label{eq:ast}
  \max_{j=0,1,2}\left\|\frac1h \eps_h(u_{j,h}) -\frac1h \eps_h(\tilde{u}_{j,h})\right\|_{L^2(\Omega)} \leq C h^{1+2\theta}
\end{equation}
because of Proposition~\ref{prop:SolvIV} below again.

Now let $u_h$ be the solution of (\ref{eq:Evol1})-(\ref{eq:BC3}) due to Theorem~\ref{thm:main1} and consider $w_h= \partial_t u_h - \partial_t \tilde{u}_h$. 
Then $w_h$ solves
\begin{eqnarray*}
  -(\partial_t w_h,\partial_t \varphi)_{Q_T} &+& \frac1{h^2} (D^2\widetilde{W}(\nabla_h
  u_h)\nabla_h w_h,\nabla_h \varphi)_{Q_T} - (w_{1,h},\varphi|_{t=0})_\Omega\\
 &=& -\frac1{h^2}\left((D^2\widetilde{W}(\nabla_h u_h)-D^2\widetilde{W}(0))\nabla_h \partial_t \tilde{u}_h, \nabla_h \varphi \right)_{Q_T}- (\partial_t r_h,\varphi)_{Q_T}  \\
 w_h & \text{is}& 2L\text{-periodic w.r.t.}\ x_j, j=1,\ldots,d-1,\\
  w_h|_{t=0} &=& w_{0,h}.
\end{eqnarray*}
for all $\varphi \in C^1([0,T];H^1_{per}(\Omega)^d)$ with $\varphi|_{t=T}=0$, where $w_{j,h}= u_{1+j,h} -  \tilde{u}_{1+j,h}$, $j=0,1$, and $r_h$ satisfies (\ref{eq:rhEstim}).
Moreover, 
\begin{eqnarray*}
  \lefteqn{\left|\frac1{h^2}\left((D^2\widetilde{W}(\nabla_h u_h)-D^2\widetilde{W}(0))\nabla_h \partial_t \tilde{u}_h, \nabla_h \varphi \right)_{\Omega}\right|}\\
 &\leq& \frac{C}h \left\|\frac1h \eps_h (u_h)\right\|_{L^\infty(0,T;L^2)}\left\|\frac1h \eps_h (\partial_t \tilde{u}_h)\right\|_{L^\infty(0,T;V)} \left\|\frac1h \eps_h (\varphi)\right\|_{L^2(\Omega)}\\
&\leq & Ch^{1+2\theta}\left\|\frac1h \eps_h (\varphi)\right\|_{L^2(\Omega)}
\end{eqnarray*}
due to a similar estimate as in (\ref{eq:EstimD3W}). Hence Lemma~\ref{lem:LinEstimWeak} implies
\begin{eqnarray*}
 \left\|\left(\partial_t (u_h - \tilde{u}_h), \frac1h \eps_h(u_h-\tilde{u}_h) \right)\right\|_{L^\infty(0,T;L^2)}\leq Ch^{1+2\theta}
\end{eqnarray*}
since $\|w_{j,h}\|_{L^2}=O(h^{1+2\theta})$ for $j=0,1$.
This proves the theorem.
\end{proof}
\begin{proposition}\label{prop:SolvIV}
  Let $0<\theta \leq 1$, let $\tilde{u}_h$ be defined as above, $\tilde{u}_{j,h}=\partial_t^j\tilde{u}_h|_{t=0}$, $j=0,1,2$, and $u_{3,h},u_{4,h}$ be as in \eqref{eq:ast2}. 
Then for some sufficiently small $h_0\in (0,1]$ there are initial values $(u_{0,h},u_{1,h},u_{2,h})$ satisfying \eqref{eq:EqIV1}-\eqref{eq:EqIV3} such that
\begin{eqnarray*}
\max_{j=0,1,k=0,1,2}\left\|\left(\nabla^j\frac1h \eps_h (u_{k,h}),\nabla_h^2 u_{k,h}\right)\right\|_{H^{2-k,0}(\Omega)} &\leq& Ch^{1+\theta},\\
  \max_{j=0,1,2}\left\|\frac1h \eps_h(u_{j,h}) -\frac1h \eps_h(\tilde{u}_{j,h})\right\|_{L^2(\Omega)} &\leq& C h^{1+2\theta}.
\end{eqnarray*}
for all $0<h\leq h_0$ and some $C>0$ independent of $h$.
\end{proposition}
In order to prove Propositions~\ref{prop:SolvIV} we have to determine $u_{0,h}$ in dependence of $u_{2,h}$. To this end we will use:
 \begin{proposition}
   \label{prop:SolvIV'}
     Let $0<h\leq 1$. 
  Then there are constants $C_0>0,M_0\in (0,1]$  such that for any $f\in H^{2,0}(\Omega)^d$ with $\|f\|_{H^{2,0}}\leq M_0h$ and $\int_\Omega f\sd x=0$ there is a unique solution $w \in H^{2,2}(\Omega)^d\cap H^{4,0}(\Omega)^d $ with $\int_\Omega w\sd x=0$ such that
  \begin{equation}\label{eq:FnEq}
    \frac1{h^2}\left( D\widetilde{W}(\nabla_h w), \nabla_h\varphi \right)_{L^2(\Omega)} =(f,\varphi)_{L^2(\Omega)}
  \end{equation}
for all $\varphi \in H^1_{per}(\Omega)^d$ and
\begin{equation}\label{eq:FnEstim}
  \left\|\left(\frac1h \eps_h (w), \nabla \frac1h \eps_h (w), \nabla_h^2 w\right)\right\|_{H^{2,0}(\Omega)} \leq C_0\|f\|_{H^{2,0}(\Omega)}.
\end{equation}
for some $C_0>0$ independent of $h,f$. Moreover, if $f'\in H^{2,0}(\Omega)^d$ with $\|f'\|_{H^{2,0}}\leq M_0h$ and $w'\in H^{2,2}(\Omega)^d\cap H^{4,0}(\Omega)^d$ is the solution of \eqref{eq:FnEq}  with $f'$ instead of $f$, then
\begin{eqnarray}\label{eq:FnEstim'}
  \left\|\left(\frac1h \eps_h (w-w'), \nabla \frac1h \eps_h (w-w'), \nabla_h^2 (w-w')\right)\right\|_{H^{2,0}} &\leq& C_0\|f-f'\|_{H^{2,0}}
\end{eqnarray}
for some $C_0>0$ independent of $h,f,f'$.
 \end{proposition}
 \begin{proof}
   First of all (\ref{eq:FnEq}) is equivalent to  
   \begin{eqnarray*}
     \lefteqn{\weight{L_h w, \varphi}_{W'_h,W_h} := \frac1{h^2}\left( D^2\widetilde{W}(0)\nabla_h w, \nabla_h\varphi \right)_{L^2(\Omega)}}\\
 &=&(f,\varphi)_{L^2(\Omega)}- 
\frac1{h^2}\left( G(\nabla_h w), \nabla_h\varphi \right)_{L^2(\Omega)} 
   \end{eqnarray*}
where $G$ is defined by 
\begin{eqnarray}\nonumber
  D\widetilde{W}(\nabla_h u) &=& D^2\widetilde{W}(0)\nabla_h u + 
  \int_0^1D^3\widetilde{W}( \tau\nabla_h u) [\nabla_h u, \nabla_h u](1-\tau)\sd \tau \\\label{eq:DefnF'}
  &\equiv& D^2\widetilde{W}(0)\nabla_h u + 
  G (\nabla_h u). 
\end{eqnarray}
For the following let $G_h(w):= \frac1{h^2} G(\nabla_h w)$.

We will prove the proposition with the aid of the contraction mapping principle. To this end we note that 
 for every $f\in H^{k,0}(\Omega)^d$, $k=0,1$ and $F\in H^{1+k,0}(\Omega)^{d\times d}$ there is a unique $w\in H^1_{per}(\Omega)^d$ with $\nabla w\in H^{k,0}(\Omega)$ such that
   \begin{equation}\label{eq:Lh}
     \weight{L_h w, \varphi}_{W'_h,W_h} = (f,\varphi)_{L^2(\Omega)}+ (F,\nabla_h\varphi)_{L^2(\Omega)}
   \end{equation}
for all $\varphi \in H^1_h(\Omega)$ because of the Lemma of Lax-Milgram, Korn's inequality, and since $L_h$ commutes with tangential derivatives.
The solution satisfies
\begin{equation}\label{eq:C0}
  \left\|\frac1h \eps_h(w)\right\|_{H^{k+1,0}(\Omega)}\leq C_0 \left(\|f\|_{H^{k,0}(\Omega)}+ \|F\|_{H^{k+1,0}_h(\Omega)}\right),\quad k=0,1
\end{equation}
for some universal $C_0>0$. Moreover, if $F\in H^1_{per}(\Omega)^{d\times d}$ with $\nabla F\in H^{k,0}(\Omega)$, then (\ref{eq:Lh}) implies 
\begin{equation*}
  -\frac1{h^2} \Div_h (D^2\widetilde{W}(0)\nabla_h w)= f-\Div_h F \qquad \text{in}\ \mathcal{D}'(\Omega).
\end{equation*}
Therefore $w\in H^{2}(\Omega)^d$ with $\nabla^2 w\in H^{k,0}(\Omega)$ by standard elliptic regularity. Hence Lemma~\ref{lem:LinEstim2} together with the previous estimate imply
\begin{eqnarray}\nonumber
  \lefteqn{\left\|\left(\frac1h \eps_h(w),\nabla \frac1h \eps_h(w),\nabla_h^2 w\right)\right\|_{H^{k,0}(\Omega)}}\\\label{eq:LhEstim}
&\leq& C_0 \left(\left\|\left(f, h^2\nabla_h F\right)\right\|_{H^{k,0}(\Omega)}+ \|F\|_{H^{1+k,0}_h(\Omega)}\right)
\end{eqnarray}
for $k=0,1$ and some universal $C_0>0$. Using  estimates based on Corollary~\ref{cor:D3W}, which are similar to the ones in Lemma~\ref{lem:DifEstimAn},  one derives 
\begin{equation*}
  \|G_h(w_1)-G_h(w_2)\|_{H^{2,0}_h(\Omega)} \leq C M_0 \left\|\frac1h\eps_h(w_1-w_2)\right\|_{V(\Omega)}
\end{equation*}
for some $C>0$ provided that 
\begin{equation}\label{eq:Estimwj}
\max_{j=1,2}  \left\|\left(\frac1h \eps_h(w_j),\nabla \frac1h \eps_h(w_j),\nabla_h^2 w_j\right)\right\|_{H^{1,0}(\Omega)}\leq 2C_0M_0 h,
\end{equation}
where $C_0>0$ is as (\ref{eq:C0}) and $M_0\in (0,1]$. Here we note that
\begin{eqnarray*}
  \partial_{x_k} G_h(w_j)&=& -\frac1{h^2}DG(\nabla_h w_j)\nabla_h \partial_{x_k}w_j,\\
  \partial_{x_k}\partial_{x_l} G_h(w_j)&=& -\frac1{h^2}DG(\nabla_h w_j)\nabla_h \partial_{x_k}\partial_{x_l}w_j\\
&& -\frac1{h^2}  D^3\widetilde{W}( \nabla_h w_j)[\nabla_h \partial_{x_k} w_j, \nabla_h \partial_{x_l} w_j]
\end{eqnarray*}
for all $k,l=1,\ldots, d-1$, $j=1,2$,
where $DG(\nabla_h w_j)=D^2\widetilde{W}(\nabla_h w_j) - D^2 \widetilde{W}(0)$. To estimate the $DG$-terms one uses (\ref{eq:D3WEstim}) (which yields estimate similiar to (\ref{eq:EstimD3W})) and to estimate the $D^3 \widetilde{W}$-term one uses (\ref{eq:D3WEstimb}).

Furthermore, using Corollary~\ref{cor:Algebra}, one shows in the same way as in the proof of Lemma~\ref{lem:LinEstim2}, 
that
\begin{equation*}
  h^2\|\nabla_h(G_h(w_1)-G_h(w_2))\|_{H^{1,0}(\Omega)} \leq C M_0 \left\|\left(\nabla_h^2(w_1-w_2), \nabla_h (w_1-w_2)\right)\right\|_{H^{1,0}}
\end{equation*}
for some $C>0$ provided that (\ref{eq:Estimwj}) holds. Hence, if $M_0\in (0,1]$ is sufficiently small, we obtain that $L_h^{-1}G_h\colon X_h\to X_h$ restricted to $\ol{B_{2C_0M_0h}(0)}$ is a contraction, where $X_h$ is normed by
\begin{equation*}
   \|w\|_{X_h}:= \left\|\left(\frac1h \eps_h(w),\nabla \frac1h \eps_h(w),\nabla_h^2 w\right)\right\|_{H^{1,0}(\Omega)}.
\end{equation*}
Therefore we obtain a unique solution $w$ solving (\ref{eq:FnEq}) and satisfying (\ref{eq:FnEstim}) and \eqref{eq:FnEstim'} with $H^{2,0}(\Omega)$ replaced by $H^{1,0}(\Omega)$. In order to obtain (\ref{eq:FnEstim}) and \eqref{eq:FnEstim'}, one can simply use that $w_j:= \partial_{x_j}w$, $j=1,\ldots, d-1$, solves
\begin{equation*}
    \frac1{h^2}\left( D^2\widetilde{W}(\nabla_h w)\nabla_h w_j, \nabla_h\varphi \right)_{L^2(\Omega)} =(\partial_{x_j}f,\varphi)_{L^2(\Omega)} \qquad \text{for all}\ \varphi \in H^1_{per}(\Omega)
\end{equation*}
 and apply Lemma~\ref{lem:LinEstim2}.
 \end{proof} 

\noindent
\begin{proof*}{of Proposition~\ref{prop:SolvIV}}
Let $L_h, X_h$ be as in the proof of Proposition~\ref{prop:SolvIV'}.

First of all, \eqref{eq:EqIV1}-\eqref{eq:EqIV3} are equivalent to 
\begin{eqnarray}\label{eq:EqIV2b}
  \frac1{h^2}(D^2W(\nabla_h u_{0,h})\nabla_h u_{1,h},\nabla_h \varphi)_\Omega &=&   (h^{1+\theta}\partial_t f_h|_{t=0},\varphi)_\Omega-(u_{3,h},\varphi)_\Omega  
\end{eqnarray}
and
\begin{eqnarray}\nonumber
  \lefteqn{\frac1{h^2}(D^2W(\nabla_h u_{0,h})\nabla_h u_{2,h},\nabla_h \varphi)_\Omega=  (h^{1+\theta}\partial_t^2 f_h|_{t=0},\varphi)_\Omega }\\\label{eq:EqIV3b}
 &&  - (u_{4,h},\varphi)_\Omega-\frac1{h^2}\left(D^3W(\nabla_h u_{0,h})[\nabla_h u_{1,h},\nabla_h u_{1,h}],\nabla_h \varphi\right)_\Omega 
\end{eqnarray}
for all $\varphi\in H^1_{per}(\Omega)^d$, where $u_{0,h}=G_1(u_{2,h})$ is the solution of \eqref{eq:FnEq} with $f=h^{1+\theta}f_h|_{t=0}-u_{2,h}$. Moreover, because of \eqref{eq:FnEstim'},
\begin{equation}\label{eq:G1Estim}
 \max_{|\gamma|\leq 1}\|\partial_{x'}^\gamma (G_1(u_{2,h})-G_1(u_{2,h}'))\|_{X_h}\leq C_0  \|u_{2,h}- u_{2,h}'\|_{H^{2,0}}
\end{equation}
for all $u_{2,h},u_{2,h}'\in H^{2,0}(\Omega)^d$ with norms bounded by $\frac12M_0h$ and $\|h^{1+\theta} f_h\|_{H^{2,0}(\Omega)}\leq \frac12M_0 h$. Note that the last condition is satisfied for all $0<h\leq 1$ sufficiently small if $u_{2,h}, u_{2,h}'$ are of order $h^{1+\theta}$ in the corresponding spaces.

Hence \eqref{eq:EqIV2b}-\eqref{eq:EqIV3b} are equivalent to
\begin{eqnarray*}
     \lefteqn{\weight{L_h u_{1,h}, \varphi}_{W'_h,W_h} =\left(h^{1+\theta}\partial_t f_h|_{t=0}-u_{3,h},\varphi\right)_{L^2(\Omega)}}\\
 &&- 
\underbrace{\frac1{h^2}\left( DG(\nabla_h u_{0,h})\nabla_h u_{1,h}, \nabla_h\varphi \right)_{L^2(\Omega)}}_{ \equiv (G_2 (u_{1,h},u_{2,h}), \nabla \varphi)_{L^2(\Omega)}},\\
     \lefteqn{\weight{L_h u_{2,h}, \varphi}_{W'_h,W_h} =}\\
 &&\left(h^{1+\theta}\partial_t^2 f_h|_{t=0}-u_{4,h},\varphi\right)_{L^2(\Omega)}- 
\frac1{h^2}\left( DG(\nabla_h u_{0,h})\nabla_h u_{2,h}, \nabla_h\varphi \right)_{L^2(\Omega)}\\
&&- 
\frac1{h^2}\left( D^3\widetilde{W}(\nabla_h u_{0,h})[\nabla_h u_{1,h},\nabla_h u_{1,h}], \nabla_h\varphi \right)_{L^2(\Omega)}\\
&&\equiv \left(h^{1+\theta}\partial_t^2 f_h|_{t=0}-u_{4,h},\varphi\right)_{L^2(\Omega)}+ (G_3 (u_{1,h},u_{2,h}), \nabla \varphi)_{L^2(\Omega)}
   \end{eqnarray*}
for all $\varphi \in H^1_{per}(\Omega)^d$.
As in the proof of Proposition~\ref{prop:SolvIV'} we show the existence of a unique solution with the aid of the contraction mapping principle.

Let us first assume that $(u_{1,h},u_{2,h})$ is a solution of the system above in order to demonstrate the essential estimates.
Then, because of \eqref{eq:LhEstim},  we have that
\begin{eqnarray*}
  \lefteqn{\left\|\left(\frac1h \eps_h(u_{1,h}),\nabla \frac1h \eps_h(u_{1,h}),\nabla_h^2 u_{1,h}\right)\right\|_{H^{1,0}(\Omega)}}\\
&\leq& C_0 \left(\left\|\left(u_{3,h}-h^{1+\theta}\partial_t f, h^2\nabla_h G_2(u_{1,h},u_{2,h})\right)\right\|_{H^{1,0}(\Omega)}+ \|G_2(u_{1,h},u_{2,h})\|_{H^{2,0}_h(\Omega)}\right).
\end{eqnarray*}
Moreover, using Corollary~\ref{cor:D3W} one derives as before 
\begin{eqnarray*}
  \lefteqn{\|G_2(u_{1,h},u_{2,h})-G_2(u_{1,h}',u_{2,h}')\|_{H^{2,0}_h(\Omega)}}\\
 &\leq& C h^{\theta} \left(\left\|\frac1h\eps_h(u_{0,h}-u_{0,h}')\right\|_{V(\Omega)}+ \left\|\frac1h\eps_h(u_{1,h}-u_{1,h}')\right\|_{V(\Omega)}\right) 
\\
 &\leq& C' h^{\theta} \left(\left\|u_{2,h}-u_{2,h}'\right\|_{H^{2,0}(\Omega)}+ \left\|\frac1h\eps_h(u_{1,h}-u_{1,h}')\right\|_{V(\Omega)}\right) 
\end{eqnarray*}
due to \eqref{eq:G1Estim}, and using Corollary~\ref{cor:Algebra} one estimates
\begin{eqnarray*}
 \lefteqn{  h^2\|\nabla_h(G_2(u_{1,h},u_{2,h})-G_2(u_{1,h},u_{2,h}))\|_{H^{1,0}(\Omega)}} \\
&\leq& C h^{\theta} \left\|\left(\nabla_hu_{0,h}, \nabla_hu_{1,h}\right)\right\|_{V_h(\Omega)}\leq C' h^{\theta} \left(\left\|\nabla_hu_{1,h}\right\|_{V_h(\Omega)}+\|u_{2,h}\|_{H^{2,0}(\Omega)} \right). 
\end{eqnarray*}
Furthermore, because of \eqref{eq:LhEstim},
\begin{eqnarray*}
  \lefteqn{\left\|\left(\frac1h \eps_h(u_{2,h}),\nabla \frac1h \eps_h(u_{2,h}),\nabla_h^2 u_{2,h}\right)\right\|_{L^2(\Omega)}}\\
&\leq& C_0 \left(\left\|\left(u_{4,h}-h^{1+\theta}\partial_t^2 f, h^2\nabla_h G_3(u_{1,h},u_{2,h})\right)\right\|_{L^2(\Omega)}+ \|G_3(u_{1,h},u_{2,h})\|_{H^{1,0}_h(\Omega)}\right),
\end{eqnarray*}
where 
\begin{eqnarray*}
  \lefteqn{\|G_3(u_{1,h},u_{2,h})-G_3(u_{1,h}',u_{2,h}')\|_{H^{1,0}_h(\Omega)}}\\
 &\leq& C h^{\theta} \left(\left\|\frac1h\eps_h(u_{0,h}-u_{0,h}')\right\|_{V(\Omega)}+ \left\|\frac1h\eps_h(u_{1,h}-u_{1,h}')\right\|_{V(\Omega)}\right) 
\\
 &\leq& C' h^{\theta} \left(\left\|u_{2,h}-u_{2,h}'\right\|_{H^{2,0}(\Omega)}+ \left\|\frac1h\eps_h(u_{1,h}-u_{1,h}')\right\|_{V(\Omega)}\right) 
\end{eqnarray*}
and 
\begin{eqnarray*}
 \lefteqn{  h^2\|\nabla_h(G_3(u_{1,h},u_{2,h})-G_3(u_{1,h},u_{2,h}))\|_{L^2(\Omega)}} \\
&\leq& C h^{\theta} \left\|\left(\nabla_hu_{0,h}, \nabla_hu_{1,h}\right)\right\|_{V_h(\Omega)}\leq C' h^{\theta} \left(\left\|\nabla_hu_{1,h}\right\|_{V_h(\Omega)}+\|u_{2,h}\|_{H^{2,0}(\Omega)} \right). 
\end{eqnarray*}
Altogether we can write \eqref{eq:EqIV1}-\eqref{eq:EqIV3} as a fixed point equation
\begin{equation*}
  \mathcal{L}_h
  \begin{pmatrix}
    u_{1,h}\\ u_{2,h}
  \end{pmatrix}
  = \mathcal{G}_h(u_{1,h},u_{2,h}),
\end{equation*}
where $\mathcal{L}_h\colon Y_h\to Z_h$ is linear, bounded and invertible, $\mathcal{G}_h\colon \ol{B_R(0)}\subset Y_h\to Z_h$ is Lipschitz continuous with Lipschitz constant of order $h^{1+\theta}$ for all $0<h\leq 1$ sufficiently small, $R>0$, and $Y_h,Z_h$ are Banach spaces normed by
\begin{eqnarray*}
  \|(w_1,w_2)\|_{Y_h} &=& \max_{j=0,1,k=1,2}\left\|\left(\nabla^j\frac1h \eps_h(w_k),\nabla_h^{1+j} w_k\right)\right\|_{H^{2-k}(\Omega)}\\
  \|(g_1,g_2)\|_{Z_h} &=& \max_{j=1,2} \inf_{g_j=f_j+\Div_h F_j}\left(\left\|\left(f_j,h^2 \nabla_h F_j\right)\right\|_{H^{2-j,0}(\Omega)}+ \|F_j\|_{H^{3-j,0}_h(\Omega)}\right),
\end{eqnarray*}
cf. \eqref{eq:LhEstim}.
Hence as in the proof of Proposition~\ref{prop:SolvIV'} one obtains for sufficiently small $0<h\leq 1$ the existence of a unique $(u_{1,h},u_{2,h})\in Y$ solving \eqref{eq:EqIV2b}-\eqref{eq:EqIV3b} such that
\begin{eqnarray*}
  \lefteqn{\max_{j=0,1,k=0,1,2}\left\|\left(\nabla^j\frac1h \eps_h(u_{k,h}),\nabla_h^{1+j} u_{k,h}\right)\right\|_{H^{2-k,0}(\Omega)}}\\
&\leq& C \left(\max_{k=0,1,2}\|\partial_t^k f|_{t=0}\|_{H^{2-k,0}}+ \max_{k=0,1}\|u_{3+k}\|_{H^{1-k,0}}\right).
\end{eqnarray*}
This proves the first part.

Finally, we have that
\begin{eqnarray*}
  \frac1{h^2}(\eps_h (u_{1,h}-\tilde{u}_{1,h}),\eps_h (\varphi))_\Omega &=&  -\frac1{h^2}(DG(\nabla_h u_{0,h})\nabla_h u_{1,h},\nabla_h \varphi)_\Omega  + (r_{1,h},\varphi)_{\Omega} \\
\frac1{h^2}(\eps_h (u_{2,h}-\tilde{u}_{2,h}),\eps_h (\varphi))_\Omega &=&  -\frac1{h^2}(DG(\nabla_h u_{0,h})\nabla_h u_{2,h},\nabla_h \varphi)_\Omega + (r_{2,h},\varphi)_{\Omega} \\
&&- 
\frac1{h^2}\left( D^3\widetilde{W}(\nabla_h u_{0,h})[\nabla_h u_{1,h},\nabla_h u_{1,h}], \nabla_h\varphi \right)_{L^2(\Omega)}
\end{eqnarray*}
for all $\varphi \in H^1_{per}(\Omega)^d$,
where $\max_{j=0,1,2}\|r_{j,h}\|_{L^2(\Omega)}\leq Ch^{1+2\theta}$.
 Here we have used that
 \begin{eqnarray*}
    \frac1{h^2}(\eps_h(\tilde{u}_{j,h}),\eps_h (\varphi))_\Omega &=& -\frac1{h^2}(\Div_hD^2\widetilde{W}(0) \tilde{u}_{j,h},\varphi)_\Omega\\
 &=& h^{1+\theta}\left(\partial_t^j f_h|_{t=0}-\tilde{u}_{2+j,h}, \varphi_d\right)_\Omega+  (\partial_t^jr_h,\varphi)_{\Omega}\\
 &=& h^{1+\theta}\left(\partial_t^j f_h|_{t=0}-u_{2+j,h}, \varphi_d\right)_\Omega+  (r_{j,h},\varphi)_{\Omega}
 \end{eqnarray*}
for $j=1,2$ because of \eqref{eq:ApproxEvol1}, where $\max_{j=1,2}\|\partial_t^jr_{h}\|_{C([0,T];L^2)}\leq Ch^{1+2\theta}$, and $\tilde{u}_{2+j,h}-u_{2+j,h}=O(h^{1+2\theta})$.
Moreover,
\begin{eqnarray*}
  \left|\frac1{h^2}\left((DG(\nabla_h u_{0,h})\nabla_h u_{1,h},\nabla_h \varphi\right)_\Omega\right| \leq Ch^{1+2\theta}\left\|\frac1h\eps_h(\varphi)\right\|_{L^2(\Omega)}\\
  \left|\frac1{h^2}\left(D^3\widetilde{W}(\nabla_h u_{0,h})[\nabla_h u_{1,h},\nabla_h u_{1,h}],\nabla_h \varphi\right)_\Omega\right| \leq Ch^{1+2\theta}\left\|\frac1h\eps_h(\varphi)\right\|_{L^2(\Omega)}
\end{eqnarray*}
for all $\varphi \in H^1_{per}(\Omega)^d$
because of estimates similiar to (\ref{eq:EstimD3W}) and the estimates for $u_{0,h}, u_{1,h}, u_{2,h}$. Hence choosing $\varphi=u_{2,h}-\tilde{u}_{2,h}$ we conclude
\begin{equation*}
  \max_{j=1,2}\left\|\frac1h \eps_h(u_{j,h}) -\frac1h \eps_h(\tilde{u}_{j,h})\right\|_{L^2(\Omega)} \leq C h^{1+2\theta}
\end{equation*}
for all sufficiently small $0<h\leq 1$ due to \eqref{eq:UniformCoercive'}.
Finally, using the estimate for $u_{2,h}-\widetilde{u}_{2,h}$, we have that
\begin{eqnarray*}
  \frac1{h^2}(\eps_h (u_{0,h}-\tilde{u}_{0,h}),\eps_h (\varphi))_\Omega &=&  -\frac1{h^2}(G(\nabla_h u_{0,h}),\nabla_h \varphi)_\Omega + (r_h|_{t=0},\varphi)_{\Omega}
\end{eqnarray*}
for all $\varphi \in H^1_{per}(\Omega)^d$, where $\|r_h\|_{C([0,T];L^2)}\leq Ch^{1+2\theta}$.
Hence using
\begin{eqnarray*}
    \left|\frac1{h^2}\left(G(\nabla_h u_{0,h}),\nabla_h \varphi\right)_\Omega\right| \leq Ch^{1+2\theta}\left\|\frac1h\eps_h(\varphi)\right\|_{L^2(\Omega)}
\end{eqnarray*}
and \eqref{eq:UniformCoercive'} we also obtain
\begin{equation*}
  \left\|\frac1h \eps_h(u_{0,h}) -\frac1h \eps_h(\tilde{u}_{0,h})\right\|_{L^2(\Omega)} \leq C h^{1+2\theta}.
\end{equation*}
\end{proof*}


\appendix
\appendix
\section{Existence of Classical Solutions for fixed $h>0$}

In this appendix we give more detailed comments how the results of \cite{KochWaveEquation} apply to our situation. First of all, in \cite{KochWaveEquation} a quasi-linear hyperbolic system of the form
\begin{alignat}{2}\label{eq:Evo1}
  \sum_{i=0}^d \partial_{x_i} F_j^i(t,x,u,Du) &= w_j(t,x,u,Du)&\quad& \text{in}\ \Omega\times (0,T),\\
  \sum_{i=0}^d \nu_i F_j^i(t,x,u,Du)&= g_j(t,x,u,Du)&\quad& \text{on}\ \partial\Omega\times (0,T),\\\label{eq:Evo3}
(u|_{t=0},\partial_t u|_{t=0})&= (u_0,u_1) && \text{in}\ \Omega
\end{alignat}
is considered, where $j=1,\ldots,N$, $x_0=t$, $\Omega\subseteq \R^d$ is a sufficiently smooth bounded domain, $\nu$ is its outer normal, $u\colon \Omega\times [0,T)\to \R^N$, and $Du$ is the Jacobi matrix of $u$ with respect to $(t,x)$. 

In our situation we do not have a bounded domain. But the equations on $\Omega=(-\tfrac12,\tfrac12)\times (-L,L)^{d-1}$ with periodic tangential boundary conditions are equivalent to the equations on the manifold $\widetilde{\Omega}=(-\frac12,\frac12)\times (\R^{d-1}/2L\Z^{d-1})$, which is a smooth compact manifold with smooth boundary. -- Actually, since the boundary is flat, one can easily differentiate equations with respect to tangential direction (e.g. using the difference quotient method) and obtain standard regularity results for elliptic equations as in the case of a bounded smooth domain. (Proofs even simplify since no localization is needed.) Many arguments in \cite{KochWaveEquation} rely on differentiation in time and applying standard results from elliptic theory, which can be applied the same way if the bounded smooth domain is replaced by $\widetilde{\Omega}$. Therefore all results in \cite{KochWaveEquation} also apply to the case when the bounded domain is replaced by $\widetilde{\Omega}$.
 
To obtain our system \eqref{eq:Evol1}-\eqref{eq:BC1} one simply has to choose $g_j\equiv 0$, $w_j(t,x,u,Du)=-(f_h)_jh^{1+\theta}$, and
\begin{eqnarray*}
  F_j^0(t,x,u,Du)=-u_j,\quad  F_j^i(t,x,u,Du)=(DW(Du))_{j,i}= \frac{\partial W}{\partial(\partial_{i} u_j)}(Du),\quad 
\end{eqnarray*}
for $j=1,\ldots,N=d$, $i=1,\ldots,d$. Then the assumptions 1-5 in \cite{KochWaveEquation} are satisfied: 
Because of
\begin{equation*}
  a_{jl}^{ik}= \frac{\partial F^i_j}{\partial(\partial_{k}u^l)}, \quad i,k=0,\ldots,d, j,l=1,\ldots,d, 
\end{equation*}
$a_{jl}^{ik}=a_{lj}^{ki}$ and the symmetry assumption 2 holds. The coerciveness condition, i.e., assumption 3, is satisfied because of \eqref{eq:coercive} and Korn's inequality. Here we note that we can choose  $\theta=e_0$ (the canonical unit vector in the time direction) as vector field in assumption 3. Then the projection $P$ on $\R^{d+1}$ is simply the projection given by $(t,x)\mapsto x$. Since $a_{jl}^{00}=1$, the assumption 4 is trivial. The assumptions 5 is satisfied because of the compatibility conditions in Theorem~\ref{thm:main1}. Finally, assumption 1 is satisfied with $s=3$ if one would additionally assume $f_h\in C^3(\ol{\Omega}\times[0,T])$. But it is easy to observe from the proof that in the present situation with $(u,Du)$-independent $w_j$ the regularity assumed in Theorem~\ref{thm:main1} is sufficient. In assumption 5 one can e.g. choose $U=(-T,\times T)\times\Omega\times \R^{d}\times \tilde{U}_h$, where 
\begin{equation*}
  \tilde{U}_h=\left\{A\in \R^{d\times d}: \left|\left(A, \tfrac1h\sym A\right)\right|\leq \eps h\right\}
\end{equation*}
 for some sufficiently small $\eps>0$ as in Remark~\ref{rem:Neighborhood}. Moreover, for sufficently small $h>0$, if $\theta>0$, $M>0$, if  $\theta =0 $, respectively, we have that $D_x u_0(x)\in \tilde{U}_h$ for any $x\in \ol{\Omega}$, cf. Section~\ref{sec:Linear}.

Altogether minor modifications of the results and arguments in \cite{KochWaveEquation} show the existence of classical solutions for fixed $h>0$ as stated in Theorem~\ref{thm:ShortTimeExistence}.

\def\cprime{$'$}

\noindent
{\bf Addresses:}\\ Helmut Abels\\
NWF I - Mathematik\\
Universit\"at Regensburg\\
93040 Regensburg, Germany\\
e-mail: helmut.abels@mathematik.uni-regensburg.de\\[1ex]
Maria Giovanna Mora\\
Scuola Internazionale Superiore di Studi Avanzati\\
via Bonomea 265\\
34136 Trieste,Italy \\
e-mail: mora@sissa.it\\[1ex]
Stefan M\"uller\\
Hausdorff Center for Mathematics\\
 Institute for Applied Mathematics\\
Universit\"at Bonn\\
Endenicher Allee 60\\
53115 Bonn, Germany\\
e-mail: stefan.mueller@hcm.uni-bonn.de


\end{document}